\documentclass[12pt]{amsart}

\usepackage[colorlinks=true,linkcolor=blue]{hyperref}
\usepackage{amsmath}
\usepackage{amssymb}
\usepackage{amsthm}
\usepackage{bm} % to get boldface Greek letters with \bm{\alpha}, etc.

\usepackage[T1]{fontenc} % to get correct underscores in email address

\usepackage{graphicx}
\usepackage{epstopdf}
\usepackage{epsfig}

\usepackage{tikz}
\usepackage{tikz-cd}

% set up margins and page layout

\setlength{\oddsidemargin}{0truein}
\setlength{\evensidemargin}{0truein}

\setlength{\textwidth}{6.5truein}
\setlength{\textheight}{9.0truein}

\setlength{\voffset}{-0.7truein}

% Theorem Declarations

\theoremstyle{plain} 
\newtheorem{thm}{Theorem}[section]

\newtheorem{prop}[thm]{Proposition}
\newtheorem{lemma}[thm]{Lemma}
\newtheorem{cor}[thm]{Corollary}

\theoremstyle{remark}
\newtheorem{remark}[thm]{Remark}

\newtheorem{example}[thm]{Example}

\theoremstyle{definition}
\newtheorem{defin}[thm]{Definition}

% SET LOTS OF MACROS

\newcommand{\eps}{\varepsilon}

\newcommand{\FF}{\mathbb{F}}

\newcommand{\PP}{\mathbb{P}}

\newcommand{\ZZ}{\mathbb{Z}}

\newcommand{\Kbar}{\overline{K}}

\newcommand{\PK}{\PP^1(K)}
\newcommand{\PKbar}{\PP^1(\Kbar)}

\DeclareMathOperator{\CR}{CR}
\DeclareMathOperator{\sgn}{sgn}
\DeclareMathOperator{\PGL}{PGL}

\DeclareMathOperator{\Orb}{Orb}

\DeclareMathOperator{\charact}{char}
\DeclareMathOperator{\Res}{Res}

\DeclareMathOperator{\Gal}{Gal}

\DeclareMathOperator{\Aut}{Aut}

\newcommand{\dsps}{\displaystyle}

\begin{document}

\title[Arboreal Galois groups and colliding critical points]
{Arboreal Galois groups for quadratic rational functions
with colliding critical points}
%\date{July 30, 2023} %SUBMITTED VERSION
%\date{April 5, 2025} %MINOR CORRECTION (BIBLIOGRAPHY)
%\date{May 17, 2024} %ANNA'S FIRST-PASS POST-REFEREE REVISIONS
\date{July 30, 2023; revised June 1, 2024}
\subjclass[2020]{37P05, 11R32, 14G25}
\author[Benedetto]{Robert L. Benedetto}
\address{Amherst College \\ Amherst, MA 01002 \\ USA}
\email{rlbenedetto@amherst.edu}
\author[Dietrich]{Anna Dietrich}
\address{Brown University \\ Providence, RI 02912 \\ USA}
\email{anna\_dietrich@brown.edu}
%\email{adietrich22@amherst.edu}

\begin{abstract}
Let $K$ be a field, and let $f\in K(z)$ be rational function.
The preimages of a point $x_0\in\PK$ under iterates of $f$ have a
natural tree structure. As a result, the Galois group of the resulting
field extension of $K$ naturally embeds into the automorphism group
of this tree. In unpublished work from 2013, Pink described a certain
proper subgroup $M_{\ell}$ that
this so-called arboreal Galois group $G_{\infty}$ must lie in
if $f$ is quadratic and its two critical points collide at the $\ell$-th iteration.
After presenting a new description of $M_{\ell}$ and a new proof of Pink's theorem,
we state and prove necessary and sufficient conditions for $G_{\infty}$ to be
the full group $M_{\ell}$.
\end{abstract}

\maketitle

\section{Introduction}

Let $K$ be a field, and let $f\in K(z)$ be a rational function.
Writing $f=g/h$ with $g,h\in K[z]$ relatively prime,
we define the degree of $f$ to be $\deg f = \max\{\deg g, \deg h\}$.
Then $f$ induces an endomorphism $f:\PKbar\to\PKbar$, where
$\Kbar$ is an algebraic closure of $K$. If $d\geq 1$,
then every point of $\PKbar$
has $d=\deg f$ preimages in $\PKbar$, counted with multiplicity.

For any integer $n\geq 0$, we write $f^n=f\circ\cdots\circ f$
for the $n$-th iterate of $f$ under composition,
with $f^0(z)=z$ and $f^1(z)=f(z)$.
For any point $x_0\in\PKbar$,
the \emph{forward orbit} and
\emph{backward orbit} of $x_0$ under $f$ are
\[ \Orb_f^{+}(x_0) := \{f^n(x_0) \, | \, n\geq 0\}
\quad\text{and}\quad
\Orb_f^{-}(x_0) := \coprod_{n\geq 0} f^{-n}(x_0) \subseteq \PKbar, \]
respectively,
where $f^{-n}(y)$ denotes the set $(f^n)^{-1}(y)$ of solutions
of the equation $f^n(z)=y$ in $\PKbar$.
Unless otherwise stated,
we will assume that there are no critical points (i.e., ramification points)
in the backward orbit of $x_0$,
%so that $f^{-n}(y)$ has $d^n$ elements, counted with multiplicity.
so that $f^{-n}(x_0)$ has $\deg(f^n)=d^n$ elements.

For each $n\geq 0$, define
\[ K_n:=K(f^{-n}(x_0))\subseteq\Kbar .\]
If $f$ is a separable mapping, then $K_n/K$ is a separable
and hence Galois extension, and we define
\[ G_n:=\Gal(K_n/K) \]
to be the associated Galois group. We also define
\[ G_{\infty}:=\Gal(K_{\infty}/K)\cong\varprojlim G_n, \quad\text{where}\quad  K_{\infty}:=\bigcup_{n\geq 0} K_n .\]

Assuming there are no critical points
of $f$ in the backward orbit of $x_0$,
we may consider $\Orb_f^-(x_0)$ as forming
an infinite $d$-ary rooted tree $T_{d,\infty}$.
The root node of the tree is $x_0$,
the elements of $f^{-n}(x_0)$ are the $d^n$ nodes on the $n$-th level
of the tree, and each $y\in f^{-(n+1)}(x_0)$
is connected to $f(y)\in f^{-n}(x_0)$ by an edge.
After making this identification, then, $G_{\infty}$
is isomorphic to a subgroup of the automorphism group $\Aut(T_{d,\infty})$
of the tree. (Here and throughout this paper, when we say that two groups
acting on a tree are isomorphic, we mean that the isomorphism is
equivariant with respect to the action on the tree.)
Similarly, $G_n$ is isomorphic
to a subgroup of $\Aut(T_{d,n})$ for each $n\geq 0$,
where $T_{d,n}$ is rooted $d$-ary tree up to the $n$-th level.
In light of this action on trees, the groups $G_n$ and $G_{\infty}$
have come to be known as \emph{arboreal} Galois groups.
See
\cite{BosJon,BEK21,BHL,FPC,Hindes,JonMan,Swam}
%\cite{AHM,FM,FPC,Hindes2,JKMT,Stoll}
for a limited selection of results on this topic,
and \cite{Jones} for a survey of the field.

When $K$ is a number field or function field,
it has been shown that $G_{\infty}$
can be the full group $\Aut(T_{d,\infty})$
for some choices of $K$, $f$, $x_0$;
see, for example,
\cite{BenJuu,Juul,Kadets,Looper,Odoni,Specter,Stoll}.
In analogy with Serre's Open Index Theorem for Galois
representations arising from elliptic curves \cite{Serre},
a folklore conjecture states that
when $K$ is a number field or function field,
$G_{\infty}$ should usually have finite index
in $\Aut(T_{d,\infty})$.
Indeed, Jones formulated this statement
as a precise conjecture for $d=2$ in \cite[Conjecture~3.11]{Jones},
and there are some conditional results for $d=2,3$
in \cite{BriTuc,JKetal}.

Just as Serre's Theorem has an exception for CM curves,
these conjectures and results have exceptions for
certain situations in which the index
$[\Aut(T_{d, \infty}):G_{\infty}]$ is known to be infinite.
For example, if $f(z)= z^d+c$ with $d\geq 3$, then the functional
equation $f(\zeta_d z)=f(z)$, where $\zeta_d$ is a $d$-th root of
unity, yields extra symmetries in $\Orb_f^-(x_0)$ and hence
infinite index, as described in \cite{BHL}.
Another case is that $f$ is
\emph{postcritically finite}, or PCF,
meaning that every critical point $c$ is preperiodic,
i.e., there exist integers $n>m\geq 0$ such that $f^n(c)=f^m(c)$.
(See \cite[Theorem~3.1]{Jones} for a proof that
$[\Aut(T_{d, \infty}):G_{\infty}]=\infty$ for PCF maps,
and \cite{ABCCF,BFHJY,Pink} for the arboreal Galois groups
of certain PCF examples.)
In this paper, we consider another condition
which forces $[\Aut(T_{d, \infty}):G_{\infty}]=\infty$, as follows.

\begin{defin}
\label{def:collide}
Let $f\in K(z)$ be a rational function,
let $\xi_1,\xi_2\in\PKbar$ be two critical points of $f$,
and let $\ell\geq 1$ be a positive integer.
We say that $\xi_1$ and $\xi_2$ \emph{collide}
at the $\ell$-th iterate if
\begin{equation}
\label{eq:PinkCond}
f^{\ell}(\xi_1)=f^{\ell}(\xi_2)
\quad\text{but } f^{\ell-1}(\xi_1)\neq f^{\ell-1}(\xi_2).
\end{equation}
\end{defin}

If $\charact K\neq 2$, and if $f$ is quadratic with colliding critical points,
then Pink observed in \cite[Theorem~4.8.1]{Pink2} that
$[\Aut(T_{2,\infty}):G_{\infty}]=\infty$.
(In this case, we must have $\ell\geq 2$,
because the two critical values must be distinct,
as each has only $d=2$ preimages, counting multiplicity.)
More precisely, if $K=k(t)$ where $k$ is a field with $\charact k\neq 2$,
and if $f\in k(z)$ and $x_0=t$,
Pink described the resulting Galois group $G_{\infty}$
in terms of a countable set of topological generators in $\Aut(T_{2,\infty})$.
If a certain discriminant is not a square in $k$, 
then $G_{\infty}$ is isomorphic to a certain subgroup
$\widetilde{M}_{\ell}$ of $\Aut(T_{2,\infty})$,
and otherwise $G_{\infty}$ is isomorphic to an index~2 subgroup
$M_{\ell}$ of $\widetilde{M}_{\ell}$.
(We define $\widetilde{M}_{\ell}$ and $M_{\ell}$ in Definition \ref{def:PinkGroup}.
In \cite{Pink2}, Pink denotes these groups $\widetilde{G}(r)$ and $G(r)$, respectively,
where $r=\ell-1$.)
Specializing from $K=k(t)$ to the constant field $k$,
the arboreal Galois group for such $f$ with root point $x_0\in k$
is therefore a subgroup of $\widetilde{M}_{\ell}$.

Our goal in this paper is twofold.
First, we describe the groups $\widetilde{M}_{\ell}$ and $M_{\ell}$ as sets of
automorphisms $\sigma\in\Aut(T_{2,\infty})$ satisfying a certain condition,
rather than by giving generators, as Pink did.
Second, we present necessary and sufficient conditions for the
arboreal Galois group $G_\infty$ associated with
a quadratic rational map $f\in K(z)$ satisfying hypothesis~\eqref{eq:PinkCond}
to be isomorphic to the full group $\widetilde{M}_{\ell}$ or $M_{\ell}$.
These goals are summarized in the following two theorems.

%\cite[Conjecture~2]{ABCCF} says this:
%
%\begin{conj}
%\label{conj:general}
%Let $k$ be a number field, and
%let $\phi(z)\in k(z)$ be a rational function of degree $d\geq 2$,
%defined over $k$.
%Let $L$ be the function field $L:=k(t)$,
%and let $G(\phi,k)$ be the arboreal Galois group of $\phi$ over $L$
%with root point $t$; that is,
%%%%%%viewed as a subgroup of $\Aut(T_{d,\infty})$. That is,
%\[ G(\phi,k) :=\Gal(L_{\infty}/L), \quad \text{where} \quad
%L_{\infty} :=\bigcup_{n\geq 0} L\big(\phi^{-n}(t)\big) .\]
%Then for any finite extension $K/k$ and any $x_0\in\PP^1(K)$,
%the associated arboreal Galois group $G_{\infty}:=\Gal(K_{\infty}/K)$
%is isomorphic to a subgroup of $G(\phi,k)$.
%Moreover, with $K=k$, it is possible to choose $x_0$
%so that $G_{\infty}$ is the full group $G(\phi,k)$.
%\end{conj}

\begin{thm}
\label{thm:main1}
Let $K$ be a field of characteristic different from $2$, and
let $f\in K(z)$ be a rational function of degree $2$ with critical points
$\xi_1,\xi_2\in\PKbar$.
Let $\delta\in K^{\times}$ be the discriminant of the minimal polynomial
of $\xi_1$ over $K$,
which we understand to be $\delta=1$ if $\xi_1\in\PK$.
Fix $x_0\in\PK$, and let $G_{\infty}$ be the arboreal Galois group for
$f$ over $K$, rooted at $x_0$.
%Suppose that $f^{\ell}(\xi_1)=f^{\ell}(\xi_2)$ and $f^{\ell-1}(\xi_1)\neq f^{\ell-1}(\xi_2)$
%for some integer $\ell\geq 2$.
Suppose that $\xi_1$ and $\xi_2$ collide at the $\ell$-th iterate under $f$,
for some integer $\ell\geq 2$. Then:
\begin{enumerate}
\item $G_{\infty}$ is isomorphic to a subgroup of $\widetilde{M}_{\ell}$,
via an appropriate labeling of the tree.
\item $G_{\infty}$ is isomorphic to a subgroup of $M_{\ell}$
if and only if $\delta$ is a square in $K$.
\end{enumerate}
\end{thm}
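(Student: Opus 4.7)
My plan is to use the new description of $\widetilde{M}_\ell$ and $M_\ell$ from Definition~\ref{def:PinkGroup} as the automorphisms of $T_{2,\infty}$ cut out by explicit combinatorial conditions (with $M_\ell\subseteq\widetilde{M}_\ell$ of index~$2$), and then show that the conditions defining $\widetilde{M}_\ell$ are forced on every $\sigma\in G_\infty$ by the collision hypothesis~\eqref{eq:PinkCond}, while the additional parity condition cutting $M_\ell$ out of $\widetilde{M}_\ell$ is, on $G_\infty$, equivalent to $\delta\in (K^\times)^2$.

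For part~(1), I first fix a Galois-compatible labeling of $T_{2,\infty}$. Each nonroot node $z$ has two children $\{z',z''\}=f^{-1}(z)$, and choosing the label "$0$" vs "$1$" amounts to ordering this unordered pair; I would make these ordering choices using the dynamics of $f$ near the collision point $y:=f^\ell(\xi_1)=f^\ell(\xi_2)$, so that the abstract combinatorics of $\widetilde{M}_\ell$ align with the dynamical structure of the preimage tree. With the labeling fixed, I verify that every $\sigma\in G_\infty$ satisfies the condition defining $\widetilde{M}_\ell$. The essential input is that for each $z\in\Orb_f^-(x_0)$, the quadratic $f(w)=z$ has discriminant (call it $D(z)$) which, as a polynomial in $z$, factors as a constant times $(z-f(\xi_1))(z-f(\xi_2))$; iterating and using $f^\ell(\xi_1)=f^\ell(\xi_2)$ yields explicit multiplicative relations among the $\sqrt{D(z)}$ for $z$ at successive levels of the tree, and these relations are precisely the content of the combinatorial condition defining $\widetilde{M}_\ell$.

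For part~(2), I would interpret the index-$2$ quotient $\widetilde{M}_\ell/M_\ell\cong\ZZ/2\ZZ$ as a parity homomorphism and identify its restriction to $G_\infty$ with the quadratic character of $\Gal(\Kbar/K)$ cutting out $K(\sqrt{\delta})/K$. Since $\xi_1,\xi_2$ are either both in $K$ (when $\delta\in(K^\times)^2$) or else Galois-conjugate roots of a quadratic with discriminant $\delta$ (being the only two critical points of the $K$-rational map $f$), this gives the asserted equivalence: $G_\infty\subseteq M_\ell$ iff every $\sigma\in\Gal(\Kbar/K)$ fixes each of $\xi_1$ and $\xi_2$ iff $\delta\in(K^\times)^2$.

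The main obstacle is this last identification: matching the abstract parity defining $M_\ell\subseteq\widetilde{M}_\ell$ with the concrete Galois action on $\sqrt{\delta}$. I expect the cleanest route is to locate a distinguished element arising from the preimage tree whose field of definition over $K$ is exactly $K(\sqrt{\delta})$ and whose Galois action is detected by the parity bit; a natural candidate is a suitable product of the square roots $\sqrt{D(z)}$ at nodes where the collision imposes the relations from part~(1). Verifying compatibility with the labeling from part~(1) is the most delicate piece of bookkeeping, and is likely best handled by induction on the tree level, using the factorization of each iterate $f^n$ through its critical points.
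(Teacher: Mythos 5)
Your overall mechanism is the same as the paper's --- the discriminant of a fiber $f^{-1}(z)$ is, up to squares, $(z-f(\xi_1))(z-f(\xi_2))$; iterating this links the level-$\ell$ discriminant above each node to the critical orbits; the collision $f^{\ell}(\xi_1)=f^{\ell}(\xi_2)$ makes the relevant critical-value contribution a square (times a factor carrying $\delta$); and $\sqrt{\delta}$ detects the level-$\ell$ parity. But as written there is a genuine gap at the heart of part (1): the labeling. The quantity $\sgn_{\ell}(\sigma,y)$ is intrinsically determined by field-theoretic data only when $\sigma(y)=y$, in which case $\sigma$ permutes $f^{-\ell}(y)$ and its parity is read off from the action on the square root of the corresponding discriminant. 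When $\sigma(y)\neq y$, the sign depends on how the labels above $y$ and above $\sigma(y)$ are coordinated, and no multiplicative relation among the $\sqrt{D(z)}$ by itself pins this down. Your plan to choose labels ``using the dynamics of $f$ near the collision point'' does not address this: the condition defining $\widetilde{M}_{\ell}$ demands $\sgn_{\ell}(\sigma,y)=\sgn_{\ell}(\sigma,x_0)$ for \emph{all} $y$ and all $\sigma$ simultaneously, and some Galois-aware construction is unavoidable. The paper does this by starting from an arbitrary labeling and relabeling inductively, level by level and Galois orbit by Galois orbit: pick an orbit representative $y$, note that the fixed-node parity statement makes $\sgn_{\ell}(\sigma_w,y)$ independent of the choice of $\sigma_w$ with $\sigma_w(y)=w$, transpose one pair of labels above each $w$ where this sign is $-1$, and check consistency of the whole orbit via the cocycle identity~\eqref{eq:Parident}. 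Without this (or an equivalent coherent choice of square roots of the fiber discriminants across each orbit), your part (1) does not go through.

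In part (2), the equivalence you assert --- $G_{\infty}\subseteq M_{\ell}$ iff every element of $\Gal(\Kbar/K)$ fixes $\xi_1$ and $\xi_2$ --- is precisely what must be proved, and it does not follow from the qualitative factorization of $D(z)$ alone. What is needed is the quantitative identity that the discriminant of the form whose roots are $f^{-\ell}(x)$ lies in $\delta\cdot K(x)^{\times 2}$ for every node $x$ (not only $x_0$, since the reverse implication needs the relabeling of part (1) carried out over $K(\sqrt{\delta})$). In the paper this comes from the iterated discriminant formula (Corollary~\ref{cor:iterdisc}) together with the collision argument of Theorem~\ref{thm:discsquare} --- which, when $\xi_1,\xi_2$ are conjugate over $K$, requires a norm/conjugacy argument, not just the collision itself --- plus Proposition~\ref{prop:DPQformula} to identify the leftover factor $\Res(P,Q)$ with $\delta$ up to squares. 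Your ``natural candidate'' element is the right kind of object (it is essentially $\sqrt{\Delta(H_{x_0,\ell})}$), but you defer exactly the computation on which both directions of (2) rest; in particular the forward direction (if $\delta$ is not a square, then some $\sigma$ has $\sgn_{\ell}(\sigma,x_0)=-1$ under \emph{every} labeling) needs this identity and is not yet established in your sketch.
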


\begin{thm}
\label{thm:main2}
With notation and hypothesis as in Theorem~\ref{thm:main1},
%suppose that the critical points $\xi_1,\xi_2$ are not preperiodic. Then
there is a countable sequence
%of quantities $\kappa_1,\kappa_2,\ldots\in K$
of quantities
\[ \kappa_1,\kappa_2,\ldots\in L:=K(\sqrt{\delta}) ,\]
given by explicit expressions involving $f$ and $x_0$,
with $\kappa_1,\ldots,\kappa_{\ell-1}\in K$,
so that the following hold.
\begin{enumerate}
\item If $\delta$ is a square in $K$, then the following are equivalent:
\begin{enumerate}
\item No finite product $\dsps \kappa_{i_1} \cdots \kappa_{i_m}$
(for $1\leq i_1 < \cdots < i_m$ and $m\geq 1$) is a square in $K$.
\item $G_\infty\cong M_{\ell}$.
\end{enumerate}
\item If $\delta$ is not a square in $K$, then $\kappa_\ell \delta$ is
a square in $K$, and the following are equivalent:
\begin{enumerate}
\item The only finite product $\dsps \kappa_{i_1} \cdots \kappa_{i_m}$
(for $1\leq i_1 < \cdots < i_m$ and $m\geq 1$) that is a square in $L$
%$L:=K(\sqrt{\delta})$,
is the single element $\kappa_{\ell}$.
\item $G_\infty\cong \widetilde{M}_{\ell}$.
\end{enumerate}
\end{enumerate}
\end{thm}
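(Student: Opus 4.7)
The plan is to induct on the level $n$, reducing the global statement $G_\infty\cong M_\ell$ (resp.\ $\widetilde{M}_\ell$) to finite-level statements $G_n=M_{\ell,n}$ that, by Kummer theory, become $\FF_2$-linear independence questions in $K_{n-1}^\times/(K_{n-1}^\times)^2$ or its analogue over $L=K(\sqrt\delta)$. Since $\charact K\neq 2$ and $f$ is quadratic, $K_n/K_{n-1}$ is obtained by adjoining the square roots of the discriminants $d_{n,y}$ of the $2^{n-1}$ quadratics $f(z)-y$ for $y\in f^{-(n-1)}(x_0)$, and Theorem~\ref{thm:main1} together with the collision hypothesis~\eqref{eq:PinkCond} already forces certain of these square roots to coincide modulo $K_{n-1}^{\times 2}$: these coincidences are precisely the ones that cut out $\widetilde{M}_\ell$ inside $\Aut(T_{2,\infty})$.

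First I would pin down the explicit form of the $\kappa_i$. Up to squares in $L$, $\kappa_i$ should be the discriminant of a specific polynomial in $L[z]$ built from $f$, $x_0$, and the common value $f^i(\xi_1)=f^i(\xi_2)$ (for $i\geq \ell$) or its predecessors (for $i<\ell$), so that the new Kummer class introduced at level $n$ by the ``collision branch'' is exactly $\kappa_n$ modulo the previously adjoined square roots. One then verifies the identity $\kappa_\ell\delta\in(K^\times)^2$ asserted in case~(2) by a direct computation from~\eqref{eq:PinkCond}; it is exactly this identity that lets $\widetilde{M}_\ell$ rather than $M_\ell$ accommodate $G_\infty$ in the non-split case.

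Next I would prove a group-theoretic lifting lemma: a subgroup $H\subseteq M_{\ell,n}$ (resp.\ $\widetilde{M}_{\ell,n}$) that surjects onto the level-$(n{-}1)$ quotient and also onto the maximal elementary abelian $2$-quotient of the kernel of the projection to that quotient must already be all of $M_{\ell,n}$ (resp.\ $\widetilde{M}_{\ell,n}$). This is the standard lifting device for iterated wreath-product-like groups, and it converts the inductive step into the Kummer-theoretic independence of the newly adjoined $\sqrt{d_{n,y}}$ over $K_{n-1}$. Combined with the inductive hypothesis, any failure of independence at level $n$ corresponds to a nonempty product $\kappa_{i_1}\cdots\kappa_{i_m}$ being a square in $K$ (respectively, a square in $L$ other than the single forced class $\kappa_\ell$ in case~(2)), which is the criterion in the theorem.

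The main obstacle is the bookkeeping needed to show that every multiplicative relation among the $d_{n,y}$ modulo squares actually reduces to a product of distinct $\kappa_i$ --- in other words, that the $\kappa_i$ really do freely generate the ``obstruction group'' in $K^\times/(K^\times)^2$ or $L^\times/(L^\times)^2$. This requires tracing how the iterated factorization of $d_{n,y}$ through $f^{-1}$ collapses modulo squares when~\eqref{eq:PinkCond} is applied, and ruling out spurious low-level relations. A second delicate point is the descent in case~(2): $\widetilde{M}_\ell$ carries an extra involution from $\Gal(L/K)$, and classes that are $L$-squares but not $K$-squares must be systematically separated from the others, which is what singles out $\kappa_\ell$ as the unique ``free'' element in the $L$-Kummer group and thereby explains its exceptional role in (2)(a).
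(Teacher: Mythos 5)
There is a genuine gap, in two places, and they are exactly the two ingredients that carry the paper's proof. First, your identification of the $\kappa_n$ is wrong in spirit for $n\geq\ell$: you propose that each $\kappa_n$ is (up to squares) the discriminant of a polynomial built from $f$, $x_0$, and the collided orbit, so that it is ``the new Kummer class introduced by the collision branch.'' But for $n\geq\ell+1$ the relevant discriminants $\Delta(H_n)$ are \emph{automatically} squares (this is Theorem~\ref{thm:discsquare}, which is how Theorem~\ref{thm:main1} is proved in the first place), so they carry no information beyond membership in $M_{\ell}$; the obstruction to $G_n=M_{\ell,n}$ at these levels is not a discriminant class of any branch but the parity of the action on the $2^{n-\ell}$ blocks of $\ell$-th--level cousins. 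Detecting that parity by a $K$-rational quantity requires the resolvent-type element $\pi_n=\prod_j\bigl(\prod_i\alpha_{j,i}+\prod_i\alpha'_{j,i}\bigr)\in K_n$, whose square is $\kappa_n$ (a cross ratio, not a discriminant) times a square in $K$; constructing it and proving the identities is the content of Lemmas~\ref{lem:Qn} and~\ref{lem:Rn}, and nothing in your plan produces such a detector.

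Second, your ``lifting lemma'' does not do the work you assign to it. The kernel $E_n$ of $M_{\ell,n}\to M_{\ell,n-1}$ is already elementary abelian, so ``surjecting onto its maximal elementary abelian $2$-quotient'' either means $H\supseteq E_n$ (which is trivially equivalent to the conclusion and cannot be checked from one new class), or, under the Kummer-theoretic reading you intend, it means showing that the $2^{n-1}$ discriminants $d_{n,y}$ are as independent as possible in $K_{n-1}^{\times}/(K_{n-1}^{\times})^{2}$. You defer exactly this --- ``every multiplicative relation among the $d_{n,y}$ modulo squares reduces to a product of distinct $\kappa_i$'' --- as bookkeeping, but it is the crux: relations can a priori involve arbitrary elements of the enormous field $K_{n-1}$, which you do not control. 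The paper's route avoids this entirely: the $\FF_2[G']$-module/orbit arguments of Proposition~\ref{prop:generate} and Theorem~\ref{thm:generate} show that, once $G_{n-1}\cong M_{\ell,n-1}$, a \emph{single} element of $\Gal(K_n/K_{n-1})$ that is odd (resp.\ an $(\ell,n)$-odd cousins map) already forces $G_n= M_{\ell,n}$, and the abelianization computation of Theorem~\ref{thm:abelian} ($M_{\ell,n}^{\textup{ab}}\cong\{\pm1\}^n$) is what converts ``no product of the $K$-rational classes $\kappa_{i}$ is a square in $K$'' into the existence of that single element (if $\sqrt{\kappa_n}\in K_{n-1}$ one would get an abelian quotient $\{\pm1\}^n$ of $G_{n-1}$, a contradiction). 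Without these group-theoretic amplification steps and the explicit detector $\pi_n$, your induction cannot close, so the proposal as it stands does not prove the theorem; the same criticism applies to case (2), where the paper's argument runs Case 1 over $L$ for $G'_n=\Gal(K_n/L)$ and uses $\kappa_\ell\delta\in K^{2}$, rather than a separate Kummer descent.
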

We define the quantities $\kappa_n\in K$ of Theorem~\ref{thm:main2} in Definition~\ref{def:kappa}.

As noted earlier, the finite Galois group $G_n=\Gal(K_n/K)$ acts on the finite tree $T_{2,n}$,
for any integer $n\geq 0$.
It is therefore convenient to define
subgroups $M_{\ell,n}$ and $\widetilde{M}_{\ell,n}$ of the finite group $\Aut(T_{2,n})$,
restricting elements of $M_{\ell}$ and $\widetilde{M}_{\ell}$ to $T_{2,n}$.
(See Definition~\ref{def:Mfinite}.)
We present analogs of Theorems~\ref{thm:main1} and~\ref{thm:main2}
for this finite setting in Corollary~\ref{cor:main1} and Theorem~\ref{thm:main2big},
respectively.

%For ease of navigation, the proof of Theorem \ref{thm:main1} is on page \pageref{prf: main1},
%and the proof of Theorem \ref{thm:main2} is on page \pageref{prf: main2}.

The outline of the paper is as follows.
In Section~\ref{sec:prelim}, we set notation, describe tree labelings,
and define the groups $M_{\ell}$ and $\widetilde{M}_{\ell}$ in terms of
the parities of $\sigma\in\Aut(T_{2,\infty})$ acting on various portions of the tree.
In Section~\ref{sec:discrim}, we present a number of elementary results
involving discriminants, most notably a formula (Corollary~\ref{cor:iterdisc})
for discriminants of iterated maps in homogeneous coordinates.
We then apply these formulas to prove Theorem~\ref{thm:main1}
in Section~\ref{sec:collideM}, using the discriminants of Section~\ref{sec:discrim}
to detect the parity conditions of Section~\ref{sec:prelim}.
In Section~\ref{sec:cousins}, we study certain generators for
the groups $M_{\ell}$ and $\widetilde{M}_{\ell}$, in particular tree automorphisms
that we call \emph{odd cousins maps}.
Finally, in Section~\ref{sec:surj}, we define the quantities $\kappa_n$
of Theorem~\ref{thm:main2} in terms of cross ratios and discriminants
involving iterates of $f$. In Lemmas~\ref{lem:Qn} and~\ref{lem:Rn},
we present explicit algebraic expressions involving iterated preimages of $x_0$,
illustrated in Figures~\ref{fig:QnLemma} and~\ref{fig:RnLemma}, that can detect
whether or not a given Galois automorphism is an odd cousins map.
We then apply Lemmas~\ref{lem:Qn} and~\ref{lem:Rn} to prove Theorem~\ref{thm:main2}.

\section{Preliminaries}
%\section{Notation, tree labelings, and Pink's groups}
\label{sec:prelim}

\subsection{Notation and tree labelings}
We set the following notation throughout this paper.
\begin{tabbing}
\hspace{8mm} \= \hspace{15mm} \=  \kill
\> $K$: \> a field of characteristic different from $2$,
with algebraic closure $\Kbar$ \\
\> $\ell$: \> an integer $\ell\geq 2$ \\
\> $f$: \> a rational function $f(z) \in K(z)$, usually of degree~$2$ \\
%\> $\xi_1,\xi_2$: \> the two critical points of $f$ in $\PKbar$ \\
\> $x_0$: \> an element of $\PK$, to serve as the root of our preimage tree \\
\> $T_n$: \> a binary rooted tree, extending $n$ levels above its root node \\
\> $T_\infty$: \> a binary rooted tree, extending infinitely above its root node \\
\> $K_n$: \> for each $n\geq 0$, the extension field $K_n:=K(f^{-n}(x_0))$ \\
\> $K_\infty$: \> the union $K_{\infty} = \bigcup_{n\geq 1} K_n$ in $\Kbar$ \\
\> $G_n$: \> the Galois group $\Gal(K_n/K_0)$ \\
\> $G_{\infty}$: \> the Galois group $\Gal(K_\infty/K_0)$
\end{tabbing}
Following \cite[Definition~1.3]{ABCCF}, we assign labels
to each of the nodes of the abstract trees $T_n$ and $T_{\infty}$,
as follows.

\begin{defin}
\label{def:labeling}
A \emph{labeling} of $T_{\infty}$ is a choice of
two tree morphisms $0,1:T_{\infty}\to T_{\infty}$ such that
$0$ maps $T_{\infty}$ bijectively onto the subtree rooted at one
of the two nodes connected to the root node $x_0$,
and $1$ maps $T_{\infty}$ bijectively onto the subtree rooted at the other.

For any integer $n\geq 1$, a labeling of $T_n$ is a choice of
two injective tree morphisms $0,1:T_{n-1}\to T_n$
with the same property.
\end{defin}

Given the two tree morphisms 0,1 of Definition~\ref{def:labeling},
we can assign a label to each node of the tree, as follows.
First, label the root node with the empty word $()$.
Then, for each level $m\geq 1$ of the tree and each node $w$ at level $1$,
label $w$ with the unique ordered $m$-tuple
$s_1s_2\ldots s_m\in\{0,1\}^m$, such that
$w=s_1\circ s_2\circ \cdots\circ s_m()$.
See Figure~\ref{fig:labeltree}.

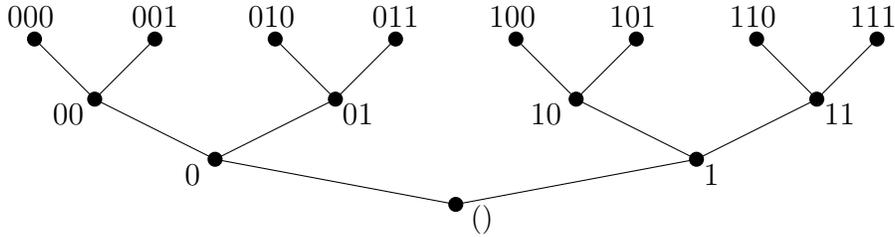
\begin{figure}
\begin{tikzpicture}
\path[draw] (0.8,2.4) -- (1.6,1.6) -- (2.4,2.4);
\path[draw] (4,2.4) -- (4.8,1.6) -- (5.6,2.4);
\path[draw] (7.2,2.4) -- (8,1.6) -- (8.8,2.4);
\path[draw] (10.4,2.4) -- (11.2,1.6) -- (12,2.4);
\path[fill] (0.8,2.4) circle (0.1);
\path[fill] (2.4,2.4) circle (0.1);
\path[fill] (4,2.4) circle (0.1);
\path[fill] (5.6,2.4) circle (0.1);
\path[fill] (7.2,2.4) circle (0.1);
\path[fill] (8.8,2.4) circle (0.1);
\path[fill] (10.4,2.4) circle (0.1);
\path[fill] (12,2.4) circle (0.1);
\path[draw] (1.6,1.6) -- (3.2,0.8) -- (4.8,1.6);
\path[draw] (8,1.6) -- (9.6,0.8) -- (11.2,1.6);
\path[fill] (1.6,1.6) circle (0.1);
\path[fill] (4.8,1.6) circle (0.1);
\path[fill] (8,1.6) circle (0.1);
\path[fill] (11.2,1.6) circle (0.1);
\path[draw] (3.2,0.8) -- (6.4,0.2) -- (9.6,0.8);
\path[fill] (3.2,0.8) circle (0.1);
\path[fill] (9.6,0.8) circle (0.1);
\path[fill] (6.4,0.2) circle (0.1);
\node (x0) at (6.75,0) {\small $()$};
\node (a0) at (2.9,0.6) {$0$};
\node (b0) at (9.8,0.6) {$1$};
\node (a00) at (1.25,1.4) {$00$};
\node (b01) at (5.1,1.4) {$01$};
\node (a10) at (7.6,1.4) {$10$};
\node (b11) at (11.5,1.4) {$11$};
\node (a000) at (0.75,2.7) {$000$};
\node (a001) at (2.4,2.7) {$001$};
\node (a010) at (3.95,2.7) {$010$};
\node (a011) at (5.6,2.7) {$011$};
\node (a100) at (7.15,2.7) {$100$};
\node (a101) at (8.75,2.7) {$101$};
\node (a110) at (10.35,2.7) {$110$};
\node (a111) at (11.95,2.7) {$111$};
\end{tikzpicture}
\caption{Labeling the tree $T_3$}
\label{fig:labeltree}
\end{figure}

Having fixed a labeling,
we will often abuse terminology and conflate a node $x$ of the tree with its label.
In addition, for each point $y\in\Orb^-_f(x_0)$,
we will often label the corresponding node
of the abstract tree $T_n$ or $T_{\infty}$ as $y$ as well,
with $x_0$ as the root node, and the $2^m$ nodes
at the $m$-th level of the tree as the points of $f^{-m}(x_0)$.

%(If there is a critical point in the backward orbit of $x_0$, we will repeat it
%and all of its preimages with appropriate multiplicity in this labeling.)

\subsection{Higher-level signs and Pink's groups}
\label{ssec:sgndef}
Fix a labeling of the tree $T_{\infty}$.
Let $y$ be a node of the tree, and
let $m\geq 1$ be a positive integer.
The $2^m$ nodes that are $m$ levels above $y$ have labels
$y s_1 s_2 \ldots s_m$, with each $s_i\in\{0,1\}$.
For any automorphism $\sigma\in\Aut(T_{\infty})$ of the (rooted) tree,
we have
\[ \sigma(y s_1 s_2 \ldots s_m) = \sigma(y) t_1 t_2 \ldots t_m,
\quad\text{for some } t_1,\ldots,t_m\in\{0,1\}. \]
Thus, $\sigma$ and $y$ together induce a bijective function from $\{0,1\}^m$ to itself,
sending $(s_1, \ldots, s_m)$ to $(t_1, \ldots, t_m)$.
Following Pink,
we define the \emph{$m$-th sign} of $\sigma$ above $y$,
denoted $\sgn_m(\sigma,y)$, to be the sign
of this permutation of  $\{0,1\}^m$
--- that is, $+1$ if the permutation is of even parity, or $-1$ if it is odd.

\begin{defin}
\label{def:PinkGroup}
Fix a labeling of the tree $T_{\infty}$. Let $\ell\geq 2$ be an integer.
We define $\widetilde{M}_{\ell}$ to be the set of all $\sigma\in\Aut(T_{\infty})$ for which
\[ \sgn_{\ell}(\sigma,y) = \sgn_{\ell}(\sigma,x_0)
\quad \text{for every node } y \text{ of } T_{\infty} .\]
We also define $M_{\ell}$ to be the set of all $\sigma\in \widetilde{M}_{\ell}$
for which this common sign is $+1$.
\end{defin}

Note that for any $\sigma\in\Aut(T_{\infty})$ and any node $y$
for which $\sigma(y)\neq y$, the sign $\sgn_\ell(\sigma,y)$ depends on
the choice of labeling of the tree. Thus, the subgroups $\widetilde{M}_{\ell}$ and $M_{\ell}$
also depend on the labeling. However, any two labelings are conjugate by
an automorphism of the tree, so a change of labeling has the effect of
replacing the subgroups $\widetilde{M}_{\ell}$ and $M_{\ell}$ by appropriate conjugates.

\begin{thm}
Fix a labeling of the tree $T_{\infty}$ and an integer $\ell\geq 2$.
Then $\widetilde{M}_{\ell}$ and $M_{\ell}$ are subgroups of $\Aut(T_{\infty})$.
\end{thm}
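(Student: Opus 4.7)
The plan is to reduce everything to a single cocycle-type identity governing how the higher-level signs behave under composition, and then to verify the three subgroup axioms (identity, closure under products, closure under inverses) as essentially formal consequences. The identity element of $\Aut(T_\infty)$ trivially has $\sgn_\ell(\mathrm{id},y)=+1$ for every node $y$, so it lies in $M_\ell\subseteq\widetilde{M}_\ell$; the substance of the theorem is closure.

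Unwinding the definition in Section~\ref{ssec:sgndef}, for each $\sigma\in\Aut(T_\infty)$ and each node $y$ there is a permutation $P_{\sigma,y}$ of $\{0,1\}^\ell$ characterized by
\[ \sigma(y s_1 \ldots s_\ell) \;=\; \sigma(y)\, t_1 \ldots t_\ell, \quad \text{where } (t_1,\ldots,t_\ell)=P_{\sigma,y}(s_1,\ldots,s_\ell), \]
and $\sgn_\ell(\sigma,y)=\sgn(P_{\sigma,y})$. The key step is to apply this definition twice to a composition $\sigma\tau$ and observe that
\[ P_{\sigma\tau,\, y} \;=\; P_{\sigma,\,\tau(y)}\circ P_{\tau,\, y}, \]
which, upon taking signs of permutations in the symmetric group on $\{0,1\}^\ell$, yields the multiplicative cocycle identity
\[ \sgn_\ell(\sigma\tau, y) \;=\; \sgn_\ell(\sigma, \tau(y))\cdot \sgn_\ell(\tau, y). \]
Substituting $\tau=\sigma^{-1}$ and using $\sgn_\ell(\mathrm{id},y)=+1$ also gives $\sgn_\ell(\sigma^{-1},y)=\sgn_\ell(\sigma,\sigma^{-1}(y))$.

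With this identity in hand, closure is immediate. If $\sigma,\tau\in\widetilde{M}_\ell$, then $\sgn_\ell(\sigma,z)$ is constant in $z$ (equal to $\sgn_\ell(\sigma,x_0)$) and likewise for $\tau$, so the cocycle identity applied at an arbitrary node $y$ gives
\[ \sgn_\ell(\sigma\tau, y) \;=\; \sgn_\ell(\sigma, x_0)\cdot \sgn_\ell(\tau, x_0), \]
which is independent of $y$, putting $\sigma\tau\in\widetilde{M}_\ell$. For inverses, the formula $\sgn_\ell(\sigma^{-1},y)=\sgn_\ell(\sigma,\sigma^{-1}(y))=\sgn_\ell(\sigma,x_0)$ is also $y$-independent, so $\sigma^{-1}\in\widetilde{M}_\ell$. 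For $M_\ell$, the same computations show that if both common signs are $+1$, then so are the common signs of $\sigma\tau$ and $\sigma^{-1}$.

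There is no real obstacle here; the only thing to be careful about is the order of composition in deriving the cocycle identity (the permutation on $\{0,1\}^\ell$ associated with $\sigma\tau$ at $y$ involves evaluating $\sigma$'s sign not at $y$ but at $\tau(y)$), so writing the definitions out explicitly before taking signs will be the cleanest way to present the argument.
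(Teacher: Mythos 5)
Your proposal is correct and follows essentially the same route as the paper: the multiplicative identity $\sgn_{\ell}(\sigma\tau,y)=\sgn_{\ell}(\sigma,\tau(y))\cdot\sgn_{\ell}(\tau,y)$ (equation~\eqref{eq:Parident} in the paper) is exactly the key step there, and closure under products and inverses (via $\tau=\sigma^{-1}$) is handled the same way. No gaps.
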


\begin{proof}
Clearly the identity automorphism $e$ belongs to $M_{\ell}\subseteq\widetilde{M}_{\ell}$.
For any $\sigma,\tau\in\Aut(T_{\infty})$, and for any node $y$ of $T_{\infty}$,
we have
\begin{equation}
\label{eq:Parident}
\sgn_{\ell}(\sigma\tau,y)
= \sgn_{\ell}(\sigma,\tau(y)) \cdot \sgn_{\ell}(\tau,y).
\end{equation}
Indeed, $\tau$ maps the nodes above $y$ to the nodes above $\tau(y)$,
in particular permuting labels of the $2^\ell$ nodes that are $\ell$ levels above $y$
with sign $\sgn_{\ell}(\tau,y)$.
Then $\sigma$ permutes the labels of those same nodes with sign
$\sgn_{\ell}(\sigma,\tau(y))$ while moving them to the nodes above $\sigma(\tau(y))$.

Thus, if $\sigma,\tau \in \widetilde{M}_{\ell}$, then for any node $y$ of the tree, we have
\[ \sgn_{\ell}(\sigma\tau,y) = 
\sgn_{\ell}(\sigma,\tau(y)) \cdot \sgn_{\ell}(\tau,y) =
\sgn_{\ell}(\sigma,\tau(x_0)) \cdot \sgn_{\ell}(\tau,x_0) =
\sgn_{\ell}(\sigma\tau,x_0),\]
and hence $\sigma\tau\in\widetilde{M}_{\ell}$.
Similarly, if $\sigma,\tau\in M_{\ell}$, then all the signs above are $+1$,
so we have $\sigma\tau\in M_{\ell}$.

In addition, for any $\sigma\in \widetilde{M}_{\ell}$, then choosing
$\tau=\sigma^{-1}\in\Aut(T_{\infty})$, equation~\eqref{eq:Parident} yields
\[ \sgn_{\ell}(\sigma^{-1},y) = \sgn_{\ell}(\sigma, \sigma^{-1}(y))
= \sgn_{\ell}(\sigma, \sigma^{-1}(x_0))
= \sgn_{\ell}(\sigma^{-1},x_0), \]
and hence $\sigma^{-1}\in\widetilde{M}_{\ell}$.
Similarly, $M_{\ell}$ is also closed under inverses.
\end{proof}

\subsection{Pink's description of $M_{\ell}$ and $\widetilde{M}_{\ell}$}
\label{ssec:PinkDes}
In Section~4.2 of \cite{Pink2}, Pink defines $G(\ell-1)$
to be the closure of the subgroup of $\Aut(T_{\infty})$
generated by a certain countable set of automorphisms
\[ \{a_i \, | \, i\geq 1 \}\cup\{b_j \, | \, 1\leq j\leq \ell-1\}\subseteq\Aut(T_{\infty}) .\]
Here, we mean closure with respect to the natural topology on $\Aut(T_{\infty})$,
given by the basis $\{ E_n \, | \, n\geq 1\}$ of open neighborhoods of $e$,
where for each $n\geq 1$, the normal subgroup $E_n$ consists of those
$\sigma\in\Aut(T_{\infty})$ that are trivial on the finite subtree $T_{n-1}$.

%Fix a labeling of $T_{\infty}$.
%If $\sigma\in\Aut(T_{\infty})$ fixes both nodes $0$ and $1$ at level~1 of the tree,
%then it is common to write $\sigma=(\sigma_0,\sigma_1)$,
%where $\sigma_0,\sigma_1\in\Aut(T_{\infty})$ are the actions of $\sigma$ on the
%subtrees rooted at the nodes $0$ and $1$, respectively.
%Let $\mapa_1$ denote the element of $\Aut(T_{\infty})$
%that swaps nodes $0$ and $1$ but leaves the rest of any node's label
%unchanged; that is,
%$\mapa(0s_2 s_3 \ldots) = 1 s_2 s_3 \ldots$,
%and $\mapa(1s_2 s_3 \ldots) = 0 s_2 s_3 \ldots$.
%In equation~(4.2.1) of \cite{Pink2}, Pink defines his group $G(\ell+1)$
%to be the subgroup of $\Aut(T_{\infty})$ generated by $\mapa_1$
%and the countably many recursively defined elements
%\[ \mapa_i=\begin{cases}
%(\mapa_{i-1},e) & \text{ if } i\geq 2 \text{ and } i\neq\ell,
%\\
%(\mapa_{\ell-1},\mapb_{\ell-1}) & \text{ if } i=\ell,
%\end{cases}\]
%together with the finitely many further recursively defined elements
%\[ \mapb_j=\begin{cases}
%(\mapb_{\ell-1}, \mapb_{\ell-1}^{-1})\mapa_1 & \text{ if } j=1,
%\\
%(\mapb_{\ell-1}\mapb_{j-1}\mapb_{\ell-1}^{-1}, e) & \text{ if } 2\leq j \leq \ell-1.
%\end{cases}  \]

In Proposition~4.2.5 of \cite{Pink2}, Pink also works out the value of
%$\sgn_m(\mapa_i)$ and $\sgn_m(\mapb_j)$ for all $m\geq 1$,
the signs $\sgn_m(a_i,x_0)$ and $\sgn_m(b_j,x_0)$ for all $m\geq 1$,
all $i\geq 1$, and all $1\leq j\leq\ell-1$.
In particular, combining his sign formulas with his recursive definitions
of the elements $a_i, b_j$, it is immediate
that the signs $\sgn_{\ell}(a_i,y)$ and $\sgn_{\ell}(b_j,y)$ are $+1$ for all
such $i,j$ and all nodes $y$ of the tree.
Thus, all of Pink's generators belong to our group $M_{\ell}$.
Moreover, $M_{\ell}$ is clearly closed with respect to the natural topology
on $\Aut(T_{\infty})$, so it follows that Pink's group
$G(\ell-1)$ is contained in our group $M_{\ell}$.
We claim that the two groups coincide.
To do so, we define the following two subgroups of $\Aut(T_n)$.

\begin{defin}
\label{def:Mfinite}
Let $\ell\geq 2$ and $n\geq 0$ be integers.
Define $M_{\ell,n}$ to be the quotient of $M_{\ell}$
formed by restricting each $\sigma\in M_{\ell}$ to the subtree $T_n$.
Similarly define $\widetilde{M}_{\ell,n}$ to be the quotient of $\widetilde{M}_{\ell}$
formed by restricting to $T_n$.
\end{defin}

It is well known that $\log_2 |\Aut(T_n)|=2^n-1$, since any
$\sigma\in \Aut(T_n)$ can be described by specifying,
for each of the $2^n - 1$ nodes $y$ at levels $0$ to $n-1$,
whether $\sigma$ switches or fixes
the labels of the two nodes immediately above $y$.
For each node $y$ of $T_n$ at level $n-\ell$ or lower, the condition
that $\sgn_{\ell}(\sigma,y)=+1$ introduces an index~2 restriction
on $M_{\ell,n}$, and the restrictions for these various nodes $y$
are independent of one another.
Since there are no such nodes for $n<\ell$, and $2^{n-\ell+1}-1$ such nodes
for $n\geq \ell$, it follows that
\[ \log_2 [\Aut(T_n) : M_{\ell,n} ] =\begin{cases}
0 & \text{ if } 0\leq n\leq \ell-1, \\
2^{n-\ell+1}-1 & \text{ if } n\geq \ell.
\end{cases} \]
(Incidentally, it follows that $M_{\ell}$ is of infinite index in $\Aut(T_{\infty})$.)
Hence,
\begin{equation}
\label{eq:Msize}
\log_2 | M_{\ell,n} | =\begin{cases}
2^n-1 & \text{ if } 0\leq n\leq \ell-1, \\
2^n - 2^{n-\ell+1} & \text{ if } n\geq \ell.
\end{cases}
\end{equation}
This formula exactly coincides with Pink's computation of $|G(\ell-1)_n|$
in Proposition~4.4.1 of \cite{Pink2}, thus proving our claim
that Pink's group $G(\ell-1)$ is the same as our group $M_{\ell}$.

Pink goes on to define a larger group $\widetilde{G}(\ell-1)$ by adding
one more generator, which he denotes $\tilde{w}$.
In equation~(4.8.8) of \cite{Pink2}, he defines
$\tilde{w}$ by a recursive relation involving his earlier generator $b_{\ell-1}$.
It is immediate from this definition and his aforementioned sign computations
that $\sgn_{\ell}(\tilde{w},y)=-1$ for every node $y$ of $T_{\infty}$,
so that $\tilde{w}\in\widetilde{M}_{\ell}$,
and hence $\widetilde{G}(\ell-1)\subseteq \widetilde{M}_{\ell}$.
On the other hand, just after defining $\tilde{w}$, Pink notes that
$[\widetilde{G}(\ell-1):G(\ell-1)]=2$, and of course we clearly have
$[\widetilde{M}_{\ell}:M_{\ell}]=2$ as well.
Therefore, it follows that Pink's group $\widetilde{G}(\ell-1)$
also coincides with our group $\widetilde{M}_{\ell}$.

\begin{remark}
We limit the use of Pink's notation $a_i,b_j,\tilde{w}$ to this section.
Having proven that Pink's groups are the same as our groups
$M_{\ell}$ and $\widetilde{M}_{\ell}$, we will no longer need to refer
to the specific generators Pink defined in \cite{Pink2}.
\end{remark}

\section{Homogeneous discriminants}
\label{sec:discrim}
The discriminants of the equations $f^n(z)=x_0$ are especially
important in the study of arboreal Galois groups.
Iterative formulas for these discriminants may be found,
for example, in \cite[Proposition~3.2]{AHM} for the polynomial case,
and in \cite[Theorem~3.2]{JonMan} for the rational function case.
In our setting, it will be convenient to derive analogous formulas
working in homogeneous coordinates on $\PP^1$, as follows.

\begin{defin}
\label{def:hdisres}
Let $P,Q\in K[X,Y]$ be homogeneous polynomials of degrees $m,n\geq 1$,
respectively. Write
\[ P(X,Y) = \prod_{i=1}^m (b_i X - a_i Y)
\quad\text{and}\quad
Q(X,Y) = \prod_{i=1}^n (d_i X - c_i Y) , \]
with $a_i,b_i,c_i,d_i\in\Kbar$,
such that for each $i$, at least one of $a_i,b_i$ is nonzero,
and at least one of $c_i,d_i$ is nonzero.
The (\emph{homogeneous}) \emph{resultant} of $P$ and $Q$ is
\[ \Res(P,Q) := \prod_{i=1}^m \prod_{j=1}^n (a_i d_j - b_i c_j) \in K, \]
and the (\emph{homogeneous}) \emph{discriminant} of $P$ is
\[ \Delta(P) := \prod_{1\leq i < j \leq m} (a_i b_j - b_i a_j)^2 \in K .\]
\end{defin}

If $P=a$ is constant, we understand $\Res(a,Q)$ to be $a^{\deg Q}$,
and similarly if $Q=c$ is constant, then $\Res(P,c)=c^{\deg P}$.
If $\deg P=1$, we understand $\Delta(P)$ to be $1$.

In the notation of Definition~\ref{def:hdisres},
the points $[a_i:b_i]\in\PKbar$ are the zeros of $P$.
Both the resultant and discriminant are invariant under
all (Galois) permutations of these zeros, and hence they do indeed lie in $K$.
They are also invariant under replacing
$(a_i,b_i)$ by $(\lambda_i a_i,\lambda_i b_i)$
for any $\lambda_1,\ldots,\lambda_m\in \Kbar^{\times}$
satisfying $\lambda_1\cdots\lambda_m=1$.
Clearly, we have
\begin{equation}
\label{eq:ResProd}
\Res(P,Q) = \prod_{i=1}^m Q(a_i,b_i)
= (-1)^{mn} \prod_{i=1}^n P(c_i,d_i).
\end{equation}
If $b_i\neq 0$ for all $i$,
i.e., if the point $[1:0]$ at $\infty$ is not a root of $P$,
then the dehomogenization $p(z)=P(z,1)\in K[z]$ of $P$ is
\[ p(z) = A \prod_{i=1}^m (z-\alpha_i),
\quad\text{where}\quad
\alpha_i = \frac{a_i}{b_i}
\quad\text{and}\quad  A = \prod_{i=1}^d b_i .\]
Similarly, if each $d_i\neq 0$,
then writing $\gamma_i=c_i/d_i$ and $C=\prod_{i=1}^n d_i$,
the dehomogenization $q$ of $Q$ is $q(z) = C\prod_{i=1}^n (z-\gamma_i)$.
Thus, if $b_i,d_i\neq 0$ for all $i$, then
\[ \Res(P,Q) = A^n C^m \prod_{i=1}^m \prod_{j=1}^n (\alpha_i - \gamma_j)
= \Res(p,q) \]
coincides with the classical (nonhomogeneous) resultant, and
\begin{equation}
\label{eq:olddisc}
\Delta(P) = A^{2m-2} \prod_{i<j}(\alpha_i-\alpha_j)^2 = \Delta(p)
\end{equation}
coincides with the classical (nonhomogeneous) discriminant.

For brief expositions on homogeneous discriminants, see
\cite[Section~IX.4]{Lang} or \cite[Section~2.4]{ADS}.
However, the iterated discriminant formulas we need
involve the orbits of critical points.
To incorporate them,
the quotient rule inspires the following definition.
\begin{defin}
\label{def:homdiff}
Let $P,Q\in K[X,Y]$ be relatively prime
homogeneous polynomials of degree $m\geq 1$.
The \emph{homogeneous differential} of $(P,Q)$ is
\[ D_{P,Q} (X,Y) := \frac{1}{Y} (P_X Q - P Q_X)
= \frac{1}{X} (P Q_Y - P_Y Q) \in K[X,Y] ,\]
where $P_X$ denotes the (formal) partial derivative of $P$
with respect to $X$, and similarly for $P_Y, Q_X, Q_Y$.
\end{defin}

A short algebraic computation,
writing $P=A_m X^m + \cdots$ and $Q=B_m X^m+\cdots$,
shows that $D_{P,Q}$ is a homogeneous polynomial of degree $2m-2$ in $K[X,Y]$,
and that the two formulas for it in Definition~\ref{def:homdiff} agree.
In addition, if $p(z),q(z)\in K[z]$ are the dehomogenizations of $P,Q$,
then the derivative of the rational function $p(z)/q(z)$
is $D_{P,Q}(z,1)/(q(z))^2$.
%The homogeneous differential plays the same role as
%the derivative does in the classical resultant formula for
%the discriminant of a polynomial, as shown in the first part of the following result.

\begin{prop}
\label{prop:compres}
Let $P,Q\in K[X,Y]$ be relatively prime
homogeneous polynomials of degree $m\geq 1$,
with homogeneous differential $D=D_{P,Q}$.
Let $\alpha,\beta\in\Kbar$, not both zero,
and define
\[ R(X,Y):=\beta P(X,Y) - \alpha Q(X,Y) . \]
Then
\begin{enumerate}
    \item $\dsps \Res(R,D) = (-1)^{m(m-1)/2} \Res(P,Q) \Delta(R)$.
    \item Let $\tilde{\alpha},\tilde{\beta}\in\Kbar$, not both zero,
and define
\[ \tilde{R}(X,Y):=\tilde{\beta} P(X,Y) - \tilde{\alpha} Q(X,Y) . \]
Then $\dsps \Res(R,\tilde{R})
= (\alpha \tilde{\beta} - \beta\tilde{\alpha})^m \Res(P,Q)$.
\end{enumerate}
\end{prop}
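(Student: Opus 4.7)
The plan is to factor $R$ over $\Kbar$ and evaluate both sides at the roots of $R$, using the identity $\Res(A,B) = \prod_i B(a_i,b_i)$ when $A(X,Y) = \prod_i(b_i X - a_i Y)$. Write $R(X,Y) = \prod_{i=1}^m (\beta_i X - \alpha_i Y)$, so the roots of $R$ in $\PKbar$ are the points $[\alpha_i:\beta_i]$. At each such root, the defining relation $\beta P(\alpha_i,\beta_i) = \alpha Q(\alpha_i,\beta_i)$ produces a common nonzero scalar $\lambda_i \in \Kbar$ with
\[ P(\alpha_i,\beta_i) = \lambda_i \alpha \qquad \text{and} \qquad Q(\alpha_i,\beta_i) = \lambda_i \beta; \]
the fact that $\lambda_i \neq 0$ comes from $\Res(P, Q) \neq 0$, which uses that $P$ and $Q$ are coprime. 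To identify $\prod_i \lambda_i$, I would compute $\Res(R,P)$ two ways: directly as $\prod_i P(\alpha_i, \beta_i) = \alpha^m \prod_i \lambda_i$, and indirectly via $\Res(P,R) = \prod_i R(a_i,b_i) = \prod_i (-\alpha) Q(a_i,b_i) = (-\alpha)^m \Res(P,Q)$ together with the symmetry $\Res(R,P) = (-1)^{m^2}\Res(P,R)$. Combined, these yield the key identity $\prod_i \lambda_i = \Res(P,Q)$, which underlies both parts.

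Part~(2) follows at once: substituting the $\lambda_i$ gives $\tilde R(\alpha_i, \beta_i) = \tilde\beta P(\alpha_i,\beta_i) - \tilde\alpha Q(\alpha_i,\beta_i) = \lambda_i(\alpha\tilde\beta - \tilde\alpha\beta)$, so multiplying over $i$ produces $\Res(R,\tilde R) = (\alpha\tilde\beta - \beta\tilde\alpha)^m \prod_i\lambda_i = (\alpha\tilde\beta - \beta\tilde\alpha)^m \Res(P,Q)$. For part~(1), I would evaluate $D$ at each root by using whichever of its two formulas is convenient; when $\beta_i \neq 0$, the relations $P = \lambda_i\alpha$, $Q = \lambda_i\beta$ at the root reduce $D = (P_X Q - PQ_X)/Y$ to $D(\alpha_i,\beta_i) = \lambda_i R_X(\alpha_i,\beta_i)/\beta_i$. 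Differentiating the factored form of $R$, all but one term vanishes at a root, so $R_X(\alpha_i,\beta_i) = \beta_i \prod_{j\neq i}(\alpha_i\beta_j - \beta_i\alpha_j)$; the $\beta_i$ factors cancel, and multiplying over $i$ gives
\[ \Res(R,D) = \Bigl(\prod_i\lambda_i\Bigr) \prod_{i \neq j} (\alpha_i\beta_j - \beta_i\alpha_j). \]
The product over ordered pairs then pairs each unordered $\{i,j\}$ into $(\alpha_i\beta_j-\beta_i\alpha_j)(\alpha_j\beta_i-\beta_j\alpha_i) = -(\alpha_i\beta_j-\beta_i\alpha_j)^2$, contributing a single minus sign per unordered pair and producing $(-1)^{m(m-1)/2}\Delta(R)$. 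Combined with $\prod_i\lambda_i = \Res(P,Q)$, this yields the asserted formula.

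The main obstacle is cosmetic rather than conceptual: keeping track of the sign $(-1)^{m(m-1)/2}$ that arises from the ordered-to-unordered conversion, and covering edge cases when some $\beta_i = 0$ (a root of $R$ at infinity) or when $\alpha = 0$ or $\beta = 0$. The alternative formula $D = (PQ_Y - P_Y Q)/X$ handles any root with $\alpha_i \neq 0$ and produces the same value of $D(\alpha_i, \beta_i)$. For the remaining degeneracies, each side of both identities is a polynomial in $\alpha, \beta$ and the coefficients of $P, Q$, so verifying the formulas on the Zariski-dense open locus where none of these vanish and then invoking continuity extends them everywhere.
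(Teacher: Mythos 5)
Your proof is correct and follows essentially the same route as the paper's: factor $R$ over $\Kbar$, evaluate $D$ and $\tilde{R}$ at the roots of $R$ via the product formula for resultants, use $\prod_i P(\alpha_i,\beta_i)=\alpha^m\Res(P,Q)$ and $\prod_i Q(\alpha_i,\beta_i)=\beta^m\Res(P,Q)$ (your $\prod_i\lambda_i=\Res(P,Q)$ is just a repackaging of these), and convert the ordered product into $(-1)^{m(m-1)/2}\Delta(R)$. Your $\lambda_i$ bookkeeping and the Zariski-density treatment of the cases $\alpha=0$, $\beta=0$, or a root at infinity merely replace the paper's explicit two-case analysis ($\beta\neq 0$ versus $\alpha\neq 0$) with an equivalent, if anything slightly more careful, argument.
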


\begin{proof}
Since $R\in \Kbar[X,Y]$ is homogeneous of degree $m$,
there exist points $[\gamma_i,\delta_i]\in\PKbar$ such that
\[ R(X,Y) = \prod_{i=1}^m \big( \delta_i X - \gamma_i Y \big) .\]
Writing $Q=\prod_{i=1}^m (d_i X - c_i Y)$,
observe that equation~\eqref{eq:ResProd} applied to $\Res(R,Q)$ yields
\begin{equation}
    \label{eq:ResRQ}
\prod_{i=1}^m Q(\gamma_i,\delta_i) %= \Res(R,Q)
= (-1)^{m^2} \prod_{i=1}^m R(c_i,d_i)
= (-1)^{m^2} \prod_{i=1}^m \beta P(c_i,d_i) = \beta^m \Res(P,Q) .
\end{equation}
A similar computation applied to $\Res(R,P)$ yields
\begin{equation}
    \label{eq:ResRP}
\prod_{i=1}^m P(\gamma_i,\delta_i) %= \Res(R,P)
%= (-1)^{m^2} \prod_{i=1}^m R(a_i,b_i)
%= (-1)^{m^2} \prod_{i=1}^m (-\alpha) Q(c_i,d_i)
= \alpha^m \Res(P,Q) .
\end{equation}

\textbf{Statement (1)}: We consider two cases.

\medskip

\textbf{Case 1}: If $\beta\neq 0$, then $P=\beta^{-1}(R+\alpha Q)$,
and hence
\begin{equation}
\label{eq:DRx}
D= \frac{1}{\beta Y} \big( R_X Q + \alpha Q_X Q
- R Q_X - \alpha Q Q_X \big) 
= \frac{1}{\beta Y} \big( R_X Q - R Q_X \big) .
\end{equation}
Noting that
\[ R_X = \sum_{i=1}^m \delta_i \prod_{j\neq i}
\big( \delta_j X - \gamma_j Y \big), \]
we have $R_X(\gamma_i,\delta_i)
=\delta_i \prod_{j\neq i} (\gamma_i \delta_j - \delta_i \gamma_j)$
for each $i=1,\ldots,m$, and of course also $R(\gamma_i,\delta_i)=0$.
Thus, equation~\eqref{eq:DRx} yields
\[ D(\gamma_i,\delta_i) = \beta^{-1} Q(\gamma_i , \delta_i)
\prod_{j\neq i} \big(\gamma_i \delta_j - \delta_i \gamma_j\big), \]
for each $i=1,\ldots, m$. Therefore, by equation~\eqref{eq:ResProd}, we have
\begin{align*}
\Res(R,D) &= \prod_{i=1}^m D(\gamma_i,\delta_i)
= \bigg( \beta^{-m} \prod_{i=1}^m Q(\gamma_i, \delta_i) \bigg)
\prod_{i=1}^m \prod_{j\neq i}
\big(\gamma_i \delta_j - \delta_i \gamma_j\big)
\\
&= (-1)^{m(m-1)/2} \bigg( \beta^{-m} \prod_{i=1}^m Q(\gamma_i, \delta_i) \bigg)
\prod_{1\leq i < j \leq m} \big(\gamma_i \delta_j - \delta_i \gamma_j\big)^2,
\\
&= (-1)^{m(m-1)/2} \Res(P,Q) \Delta(R),
\end{align*}
by rearranging the final product in the first line
and then applying equation~\eqref{eq:ResRQ}.

\medskip

\textbf{Case 2}: If $\alpha\neq 0$, then $Q=\alpha^{-1}(\beta P - R)$.
A similar argument as in Case~1 yields
\[ D= \frac{1}{\alpha X} \big( P_Y R - PR_Y \big)
\quad\text{and}\quad R_Y(\gamma_i , \delta_i)
= -\gamma_i \prod_{j\neq i} \big(\gamma_i \delta_j - \delta_i \gamma_j\big).\]
Thus,
\begin{align*}
\Res(R,D) &= \prod_{i=1}^m D(\gamma_i,\delta_i)
= \bigg( \alpha^{-m} \prod_{i=1}^m P(\gamma_i, \delta_i) \bigg)
\prod_{i=1}^m \prod_{j\neq i}
\big(\gamma_i \delta_j - \delta_i \gamma_j\big)
\\
&= (-1)^{m(m-1)/2} \Res(P,Q) \Delta(R),
\end{align*}
by similar reasoning as in Case~1, using equation~\eqref{eq:ResRP}.

\medskip

\textbf{Statement (2)}:
Without loss, we may assume $\beta\neq 0$. Then
\begin{align*}
\Res(R,\tilde{R}) &=
\prod_{i=1}^m \tilde{R}(\gamma_i, \delta_i)
= \beta^{-m} \prod_{i=1}^m
\big(\beta\tilde{\beta} P(\gamma_i, \delta_i)
- \beta\tilde{\alpha} Q(\gamma_i, \delta_i) \big)
\\
&= \beta^{-m} \prod_{i=1}^m
\big(\alpha\tilde{\beta} Q(\gamma_i, \delta_i)
- \beta\tilde{\alpha} Q(\gamma_i, \delta_i) \big)
\\
&= \beta^{-m} (\alpha \tilde{\beta} - \beta\tilde{\alpha})^m
\prod_{i=1}^m Q(\gamma_i, \delta_i)
= (\alpha \tilde{\beta} - \beta\tilde{\alpha})^m \Res(P,Q),
\end{align*}
where the first equality is by equation~\eqref{eq:ResProd},
the third is because $R(\gamma_i,\delta_i)=0$,
and the fifth is by equation~\eqref{eq:ResRQ}.
\end{proof}

\begin{thm}
\label{thm:compdisc}
Let $P,Q\in K[X,Y]$ be relatively prime
homogeneous polynomials of degree $m\geq 1$,
with homogeneous differential $D=D_{P,Q}$.
Let $J\in K[X,Y]$ be a homogeneous polynomial of degree $n\geq 1$,
and let
\[ H(X,Y):=J(P(X,Y),Q(X,Y)). \]
Then
\[ \Delta(H) = (-1)^{mn(m-1)/2} \Delta(J)^m \Res(P,Q)^{n(n-2)} \Res(H,D) .\]
\end{thm}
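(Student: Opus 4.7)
The plan is to factor $J$ over $\Kbar$ into linear forms, which induces a compatible factorization of $H$, and then apply Proposition~\ref{prop:compres} to the pieces. Write $J(X,Y) = \prod_{i=1}^n (\beta_i X - \alpha_i Y)$ with $(\alpha_i,\beta_i)\in\Kbar^2\setminus\{(0,0)\}$, and set $R_i(X,Y) := \beta_i P(X,Y) - \alpha_i Q(X,Y)$. Since $P,Q$ are $K$-linearly independent (being coprime and of equal positive degree), each $R_i$ is a nonzero homogeneous polynomial of degree exactly $m$, and $H = \prod_{i=1}^n R_i$ as polynomials of degree $mn$.

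Next I would establish the multiplicative decomposition of the homogeneous discriminant: for any factorization $H = H_1 H_2 \cdots H_n$ into nonzero homogeneous factors,
\[ \Delta(H) = \prod_{i=1}^n \Delta(H_i) \cdot \prod_{1\leq i < j \leq n} \Res(H_i, H_j)^2. \]
This follows directly from Definition~\ref{def:hdisres} by splitting each pair of roots of $H$ into those belonging to a common factor $H_i$ (which together contribute $\Delta(H_i)$) and those coming from distinct factors $H_i,H_j$ with $i<j$ (which contribute $\Res(H_i,H_j)^2$, the square arising because Definition~\ref{def:hdisres} pairs indices symmetrically while the resultant orders them).

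Applying this to $H = \prod R_i$, I would then invoke Proposition~\ref{prop:compres}(1) to rewrite each $\Delta(R_i)$ as $(-1)^{m(m-1)/2}\Res(P,Q)^{-1}\Res(R_i,D)$, which is legitimate because $\Res(P,Q)\neq 0$ by coprimality of $P$ and $Q$. The multiplicativity of the resultant in its first argument gives $\prod_{i=1}^n \Res(R_i, D) = \Res(H,D)$, so
\[ \prod_{i=1}^n \Delta(R_i) = (-1)^{nm(m-1)/2}\,\Res(P,Q)^{-n}\,\Res(H,D). \]
Proposition~\ref{prop:compres}(2) gives $\Res(R_i,R_j) = (\alpha_i\beta_j - \beta_i\alpha_j)^m \Res(P,Q)$, so
\[ \prod_{1\leq i<j\leq n} \Res(R_i,R_j)^2 = \Bigl(\prod_{i<j}(\alpha_i\beta_j-\beta_i\alpha_j)^2\Bigr)^{\!m}\Res(P,Q)^{n(n-1)} = \Delta(J)^m\,\Res(P,Q)^{n(n-1)}. \]

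Multiplying these together and collecting the $\Res(P,Q)$ exponents via $-n + n(n-1) = n(n-2)$ yields the claimed identity. The only real ``obstacle'' is careful bookkeeping of the exponent of $\Res(P,Q)$ and the sign $(-1)^{mn(m-1)/2}$; there is no substantive difficulty once Proposition~\ref{prop:compres} is available. I would close with a brief remark handling the degenerate case in which $J$ has a repeated factor: then some pair $(\alpha_i,\beta_i),(\alpha_j,\beta_j)$ is proportional, forcing $R_i$ and $R_j$ to share a common factor, so both $\Delta(J)$ and $\Delta(H)$ vanish and the identity holds trivially.
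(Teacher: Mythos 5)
Your proposal is correct and follows essentially the same route as the paper's own proof: factor $J$ into linear forms so that $H=\prod_i R_i$ with $R_i=\beta_i P-\alpha_i Q$, split $\Delta(H)$ as $\prod_i\Delta(R_i)\cdot\prod_{i<j}\Res(R_i,R_j)^2$, and then apply both parts of Proposition~\ref{prop:compres} together with multiplicativity of the resultant. The bookkeeping of the sign and of the exponent of $\Res(P,Q)$ matches the paper exactly, and your closing remark on repeated factors of $J$ is a harmless (and correct) addition.
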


\begin{proof}
Write $J(X,Y)=\prod_{i=1}^n (\beta_i X - \alpha_i Y)$,
with $[\alpha_i,\beta_i]\in \PKbar$. Then
\[ H(X,Y) = \prod_{i=1}^n R_i(X,Y)
\quad\text{where}\quad
R_i = \beta_i P - \alpha_i Q .\]
Writing $R_i(X,Y)=\prod_{j=1}^m (\delta_{ij} X - \gamma_{ij} Y)$,
we have
\begin{align}
\label{eq:DeltaH}
\Delta(H) &= \Bigg[ \prod_{i=1}^n \prod_{1\leq j < k \leq m}
\big(\gamma_{ij} \delta_{ik} - \gamma_{ik} \delta_{ij} \big)^2 \Bigg]
\Bigg[ \prod_{1\leq i < j \leq n} \prod_{k=1}^n \prod_{\ell=1}^n
\big(\gamma_{ik} \delta_{j\ell} - \gamma_{j\ell} \delta_{ik} \big)^2 \Bigg]
\notag \\
&= \Bigg[ \prod_{i=1}^n \Delta(R_i) \Bigg]
\Bigg[ \prod_{1\leq i < j \leq n} \Res(R_i,R_j)^2 \Bigg] .
\end{align}
The first product in equation~\eqref{eq:DeltaH} is
\begin{align*}
%\label{eq:DeltaH1}
\prod_{i=1}^n \Delta(R_i) &=
\prod_{i=1}^n \Big[ (-1)^{m(m-1)/2} \Res(P,Q)^{-1} \Res(R_i,D) \Big]
\\
&=(-1)^{mn(m-1)/2} \Res(P,Q)^{-n} \Res(H,D) ,
\end{align*}
where the first equality is by Proposition~\ref{prop:compres}(1),
and the second is by equation~\eqref{eq:ResProd}
and the fact that $H=\prod_{i=1}^n R_i$.
The second product in equation~\eqref{eq:DeltaH} is
\begin{align*}
\prod_{1\leq i < j \leq n} \Res(R_i,R_j)^2
&= \prod_{1\leq i < j \leq n}
\Big[ (\alpha_i \beta_j - \alpha_j \beta_i)^{2m} \Res(P,Q)^2 \Big]
\\
&= \Res(P,Q)^{n(n-1)} \Bigg( \prod_{1\leq i < j \leq n}
(\alpha_i \beta_j - \alpha_j \beta_i)^2 \bigg)^m
\\
&= \Res(P,Q)^{n(n-1)} \Delta(J)^m,
\end{align*}
where the first equality is by Proposition~\ref{prop:compres}(2).
Thus, equation~\eqref{eq:DeltaH} yields the desired formula.
\end{proof}

A rational function $f(z)=p(z)/q(z)\in K(z)$ of degree $d\geq 1$
may be written in homogeneous coordinates as $F(X,Y)=(P(X,Y),Q(X,Y))$,
where
\[ P(X,Y):=Y^d p(X/Y), \quad\text{and}\quad Q(X,Y)=Y^d q(X,Y), \]
which are both homogeneous polynomials of degree $d$.
Of course, this lift to homogeneous coordinates is not unique,
as we may multiply both $P$ and $Q$ by the same constant
$\lambda\in K^{\times}$. Thus, having fixed a choice of $P$ and $Q$,
the iterate $F^n=(P_n,Q_n)$ is a choice of lift of $f^n$ to homogeneous
coordinates.

\begin{cor}
\label{cor:iterdisc}
Let $P,Q\in K[X,Y]$ be relatively prime
homogeneous polynomials of degree $d\geq 1$,
with homogeneous differential $D=D_{P,Q}$.
Write
\[ D(X,Y)= c \prod_{i=1}^{2d-2} (\theta_i X - \eta_i Y)
\quad \text{with} \quad c\in K^{\times} \text{ and } [\eta_i,\theta_i]\in\PKbar . \]
Let $F=(P,Q)$, and let $s_0,t_0\in K$ not both zero. For each $n\geq 0$, define
\[ H_n(X,Y) = t_0 P_n(X,Y) - s_0 Q_n(X,Y) \in K[X,Y],\]
where $(P_n,Q_n) = F^n \in K[X,Y]\times K[X,Y]$.
Then for each $n\geq 1$, we have
\begin{equation}
\label{eq:iterdisc}
\Delta(H_n) =
(-1)^{d^n(d-1)/2} c^{d^n} \Delta(H_{n-1})^d \Res(P,Q)^{d^{n-1}(d^{n-1}-2)}
\prod_{i=1}^{2d-2} H_n(\eta_i,\theta_i) .
\end{equation}
\end{cor}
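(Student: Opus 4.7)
The plan is to recognize that $H_n$ has the shape required by Theorem~\ref{thm:compdisc} and apply that theorem together with the resultant product formula \eqref{eq:ResProd}.

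First I would record the iterative identity
\[ H_n(X,Y) = t_0 P_{n-1}(P(X,Y),Q(X,Y)) - s_0 Q_{n-1}(P(X,Y),Q(X,Y)) = H_{n-1}\bigl(P(X,Y),Q(X,Y)\bigr), \]
which follows from $F^n = F^{n-1}\circ F$, i.e.\ from $(P_n,Q_n)=(P_{n-1}(P,Q),Q_{n-1}(P,Q))$. Thus $H_n=J(P,Q)$ with $J=H_{n-1}$, where $J$ is homogeneous of degree $d^{n-1}$ and $P,Q$ are homogeneous of degree $d$. This is exactly the setup of Theorem~\ref{thm:compdisc}.

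Applying Theorem~\ref{thm:compdisc} with (in its notation) $m=d$ and $n=d^{n-1}$, I obtain
\[ \Delta(H_n) = (-1)^{d^{n}(d-1)/2}\,\Delta(H_{n-1})^{d}\,\Res(P,Q)^{d^{n-1}(d^{n-1}-2)}\,\Res(H_n,D), \]
which already produces every factor in \eqref{eq:iterdisc} except the $c^{d^n}\prod_i H_n(\eta_i,\theta_i)$ term. All that remains is to verify
\[ \Res(H_n,D) = c^{d^n}\prod_{i=1}^{2d-2} H_n(\eta_i,\theta_i). \]

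To do this, I would use the factorization $D(X,Y)=c\prod_{i=1}^{2d-2}(\theta_i X-\eta_i Y)$ and absorb the leading constant $c$ by writing $D=\prod_i(c_i\theta_i X-c_i\eta_i Y)$ with $\prod_i c_i=c$. Then equation~\eqref{eq:ResProd} gives
\[ \Res(D,H_n)=\prod_{i=1}^{2d-2} H_n(c_i\eta_i,c_i\theta_i)=\prod_{i=1}^{2d-2} c_i^{d^n}\,H_n(\eta_i,\theta_i) = c^{d^n}\prod_{i=1}^{2d-2} H_n(\eta_i,\theta_i), \]
using that $H_n$ is homogeneous of degree $d^n$. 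Finally, since $\deg D\cdot\deg H_n=(2d-2)d^n$ is even, $\Res(H_n,D)=\Res(D,H_n)$, and substituting this into the previous display yields \eqref{eq:iterdisc}.

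There is no real obstacle here beyond bookkeeping: the only thing to watch is the scaling of $D$ by the constant $c$, which injects the factor $c^{d^n}$ into the resultant computation, and the parity check that lets us swap the arguments of the resultant without a sign change. The signs and the exponent of $\Res(P,Q)$ in \eqref{eq:iterdisc} drop out of Theorem~\ref{thm:compdisc} directly with $n$ replaced by $d^{n-1}$.
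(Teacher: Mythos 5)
Your proposal is correct and follows essentially the same route as the paper: identify $H_n=H_{n-1}(P,Q)$, apply Theorem~\ref{thm:compdisc} with $m=d$ and $d^{n-1}$ in the role of $n$, and then evaluate $\Res(H_n,D)$ via equation~\eqref{eq:ResProd}, with the constant $c$ contributing $c^{d^n}$ and the sign disappearing because $\deg(D)\cdot\deg(H_n)=(2d-2)d^n$ is even. No gaps.
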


\begin{proof}
Note that $H_0(X,Y)=t_0 X- s_0 Y$ is homogeneous of degree $1$,
and for $n\geq 1$, we have that $H_n=H_{n-1}\circ F$
is homogeneous of degree $d^n$.
Pulling the constant $c$ out of $D$
(and raising it to the power $\deg(H_n)=d^n$)
and applying equation~\eqref{eq:ResProd},
we have
\[ \Res(H_n,D) = (-1)^{d^n(2d-2)} c^{d^n} \prod_{i=1}^{2d-2} H_n(\eta_i,\theta_i)
= c^{d^n} \prod_{i=1}^{2d-2} H_n(\eta_i,\theta_i) .\]
Thus, with $H=H_{n}$, $J=H_{n-1}$, $m=d$, and using $d^{n-1}$ in the role of $n$,
the desired formula is immediate from Theorem~\ref{thm:compdisc}.
\end{proof}

\begin{thm}
\label{thm:discsquare}
With notation as in Corollary~\ref{cor:iterdisc},
suppose the degree is $d=2$, and that $\charact K \neq 2$.
%Let $\delta':=\Res(P,Q)\in K^{\times}$.
Suppose that there is an integer $\ell\geq 1$ such that
\[ F^{\ell}([\eta_1,\theta_1]) = F^{\ell}([\eta_2,\theta_2]) \text{ as points in } \PKbar, \]
but
\[ F^{\ell-1}([\eta_1,\theta_1]) \neq F^{\ell-1}([\eta_2,\theta_2]) \text{ as points in } \PKbar. \]
Then $\ell\geq 2$, and we have
%\[ \delta' \prod_{i=1}^2 H_{\ell}(\eta_i,\theta_i) \in K^2
\[ \Res(P,Q) \prod_{i=1}^2 H_{\ell}(\eta_i,\theta_i) \in K^2
\quad \text{and} \quad \prod_{i=1}^2 H_{n}(\eta_i,\theta_i) \in K^2
\text{ for every } n\geq \ell+1,\]
where $K^2$ denotes the set of squares of elements of $K$.
\end{thm}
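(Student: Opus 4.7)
The plan is first to dispose of $\ell\geq 2$ by a ramification count: if $\ell=1$, the point $y=f(\xi_1)=f(\xi_2)$ would have two distinct critical preimages, each of local degree~$2$, giving total preimage multiplicity at least~$4$ and contradicting $\deg f=2$. Next I would translate the collision hypothesis into homogeneous coordinates by setting $(u_i,v_i):=F^{\ell}(\eta_i,\theta_i)\in\Kbar^{2}$; the projective equality $f^{\ell}(\xi_1)=f^{\ell}(\xi_2)$ yields a scalar $\lambda\in\Kbar^{\times}$ with $(u_1,v_1)=\lambda(u_2,v_2)$, and because $F$ is homogeneous of degree~$2$ the identity $F^{k}(\kappa X,\kappa Y)=\kappa^{2^{k}}F^{k}(X,Y)$ propagates the scaling as
\[ F^{n}(\eta_1,\theta_1)=\lambda^{2^{n-\ell}}F^{n}(\eta_2,\theta_2),\qquad H_n(\eta_1,\theta_1)=\lambda^{2^{n-\ell}}H_n(\eta_2,\theta_2), \]
for every $n\geq\ell$.

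For $n\geq\ell+1$, $\lambda^{2^{n-\ell}}=(\lambda^{2^{n-\ell-1}})^{2}$ is a square in $\Kbar$, so $\prod_{i=1}^{2}H_n(\eta_i,\theta_i)=w_n^{2}$ with $w_n:=\lambda^{2^{n-\ell-1}}H_n(\eta_2,\theta_2)\in\Kbar$. To verify $w_n\in K$ I perform Galois descent: if $\xi_1,\xi_2\in\PK$ one may take $(\eta_i,\theta_i)\in K^{2}$ and $w_n\in K$ tautologically; otherwise let $L=K(\xi_1,\xi_2)$ and let $\tau$ be the non-trivial element of $\Gal(L/K)$, with Galois-compatible lifts chosen so that $\tau$ swaps $(\eta_1,\theta_1)\leftrightarrow(\eta_2,\theta_2)$. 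Direct substitution yields $\tau(\lambda)=1/\lambda$ and $\tau(H_n(\eta_2,\theta_2))=H_n(\eta_1,\theta_1)=\lambda^{2^{n-\ell}}H_n(\eta_2,\theta_2)$, so
\[ \tau(w_n)=\lambda^{-2^{n-\ell-1}}\cdot\lambda^{2^{n-\ell}}\,H_n(\eta_2,\theta_2)=w_n, \]
giving $w_n\in L^{\tau}=K$ and hence $\prod_i H_n(\eta_i,\theta_i)=w_n^{2}\in K^{2}$.

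The case $n=\ell$ is the heart of the argument, as the exponent of $\lambda$ is now only $1$ and the extra factor $\Res(P,Q)$ must precisely compensate. My plan here is to apply Proposition~\ref{prop:compres}(1) to the binary quadratic $R(X,Y):=v_1P(X,Y)-u_1Q(X,Y)\in L[X,Y]$, whose roots are exactly $(a_1,b_1):=F^{\ell-1}(\eta_1,\theta_1)$ and $(a_2,b_2):=F^{\ell-1}(\eta_2,\theta_2)$: the first because $F(a_1,b_1)=(u_1,v_1)$ by definition, and the second because $F(a_2,b_2)=(u_2,v_2)=\lambda^{-1}(u_1,v_1)$ is also proportional to $(u_1,v_1)$. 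Factor $R=\gamma(b_1X-a_1Y)(b_2X-a_2Y)$ with $\gamma\in L$; using the identity $D=(v_1Y)^{-1}(R_XQ-RQ_X)$ derived in the proof of Proposition~\ref{prop:compres}(1), together with the crucial observation that $Q(a_1,b_1)=v_1$ while $Q(a_2,b_2)=v_1/\lambda$, one computes
\[ D(a_1,b_1)=\gamma(a_1b_2-b_1a_2),\qquad D(a_2,b_2)=-\tfrac{\gamma}{\lambda}(a_1b_2-b_1a_2). \]
Comparing the product with the identity $D(a_1,b_1)D(a_2,b_2)=-\Res(P,Q)(a_1b_2-b_1a_2)^{2}$ supplied by Proposition~\ref{prop:compres}(1) yields the key relation $\gamma^{2}=\lambda\,\Res(P,Q)$. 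Combined with $\prod_iH_{\ell}(\eta_i,\theta_i)=\lambda H_{\ell}(\eta_2,\theta_2)^{2}$ this gives $\Res(P,Q)\prod_iH_{\ell}(\eta_i,\theta_i)=(\gamma H_{\ell}(\eta_2,\theta_2))^{2}$, and a Galois computation in the same spirit, using $\tau(\gamma)=\gamma/\lambda$ (which follows from $\tau(R)=R/\lambda$), shows $\tau(\gamma H_{\ell}(\eta_2,\theta_2))=(\gamma/\lambda)\cdot\lambda H_{\ell}(\eta_2,\theta_2)=\gamma H_{\ell}(\eta_2,\theta_2)$, so $\gamma H_{\ell}(\eta_2,\theta_2)\in L^{\tau}=K$, completing the proof. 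The main obstacle will be keeping track of the asymmetric factor of $\lambda$ at the second critical point: a naive reading produces the false identity $\gamma^{2}=\Res(P,Q)$, which is inconsistent with Galois descent when the critical points are not $K$-rational, and tracking the correct factor $\gamma^{2}=\lambda\Res(P,Q)$ is exactly what encodes the fact that $\lambda\,\Res(P,Q)$ is a norm from $L$ to $K$.
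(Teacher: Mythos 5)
Your argument is correct and follows essentially the same route as the paper's: reduce to lifts of the critical points defined over the at most quadratic field $L$, use Proposition~\ref{prop:compres} to tie the level-$\ell$ product to $\Res(P,Q)$, and finish by Galois descent, with the cases $n\geq\ell+1$ handled by scalar propagation and a norm computation exactly as in the paper. The differences are only in the bookkeeping --- the paper first descends the common value $F^{\ell}([\eta_1,\theta_1])$ to a $K$-rational point and invokes Proposition~\ref{prop:compres}(2), whereas you keep the $L$-rational lift, track the scalar $\lambda$ with $\tau(\lambda)=\lambda^{-1}$, and extract $\gamma^{2}=\lambda\Res(P,Q)$ from Proposition~\ref{prop:compres}(1) --- but you should add the paper's one-line remark that replacing the given lifts $(\eta_i,\theta_i)$ by your normalized, $\tau$-swapped lifts changes each product only by a square in $K$, and note that the degenerate possibilities $v_1=0$ or $b_i=0$ in your evaluation of $D$ are handled by the symmetric formula for $D$ (or by computing $\Res(R,Q)$ or $\Res(R,P)$ via Proposition~\ref{prop:compres}(2) instead).
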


\begin{proof}
The two critical points $[\eta_1,\theta_1]$ and $[\eta_2,\theta_2]$
of the morphism $F:\PP^1\to\PP^1$ given by $F=[P,Q]$
must be distinct, since $\deg(F)=2$, and a higher-multiplicity critical point
would result in a strictly larger local degree.
(Recall that we have $\charact K\neq 2$.)
In addition, if $F([\eta_1,\theta_1])=F([\eta_2,\theta_2])$, then this common
point would have at least four preimages (counting multiplicity),
again contradicting the fact that $\deg(F)=2$.
Thus, the smallest iterate $\ell$ for which 
$F^{\ell}([\eta_1,\theta_1]) = F^{\ell}([\eta_2,\theta_2])$
must satisfy $\ell\geq 2$, as claimed.

For any $\lambda\in\Kbar^{\times}$,
replacing $(\eta_1,\theta_1)$ by $(\lambda\eta_1,\lambda\theta_1)$
and $(\eta_2,\theta_2)$ by $(\lambda^{-1}\eta_2,\lambda^{-1}\theta_2)$
changes neither the points $F^{n}([\eta_i,\theta_i])$
nor the product $\prod_{i=1}^2 H_{n}(\eta_i,\theta_i)$.
In addition, for any $c\in K^{\times}$,
replacing $(\eta_1,\theta_1)$ by $(c\eta_1,c\theta_1)$
without changing $(\eta_2,\theta_2)$
similarly does not change the points $F^{n}([\eta_i,\theta_i])$,
but it changes the product $\prod_{i=1}^2 H_{n}(\eta_i,\theta_i)$
by a factor of $c^{2^n}$, which is a square in $K$ for $n\geq 1$.
Thus, since the quadratic form $D_{P,Q}(X,Y)$ is defined over $K$,
we may assume that $\eta_1,\eta_2,\theta_1,\theta_2\in L$,
where $L$ is either $K$ or a quadratic extension of $K$;
and in the latter case, we may further assume that
$(\eta_2,\theta_2)$ is $\Gal(L/K)$-conjugate to $(\eta_1,\theta_1)$.

Let $[a,b]\in\PKbar$ be the point
\[ [a,b] := F^{\ell}([\eta_1,\theta_1]) = F^{\ell}([\eta_2,\theta_2]) .\]
Since this point is either defined over $K$ already,
or else defined over $L$ and $\Gal(L/K)$-conjugate to itself,
we may assume that $a,b\in K$.

For each $i=1,2$, define $\alpha_i,\beta_i\in L$ by
\[ (\alpha_i,\beta_i) :=F^{\ell-1}(\eta_i,\theta_i) .\]
Then since the two points $[\alpha_i,\beta_i]\in\PKbar$ are distinct,
the quadratic form $bP-aQ\in K[X,Y]$ must factor as
\[ bP(X,Y) - aQ(X,Y) = \mu (\beta_1 X - \alpha_1 Y)(\beta_2 X - \alpha_2 Y) \]
for some $\mu\in K^{\times}$. Thus,
\begin{align*}
\prod_{i=1}^2 H_{\ell}(\eta_i,\theta_i)
& = \prod_{i=1}^2 \big( t_0 P(\alpha_i,\beta_i) - s_0 Q(\alpha_i,\beta_i) \big)
= \Res\big( t_0 P - s_0 Q, \mu^{-1} (bP-aQ) \big)
\\
&= \mu^{-2} \Res(t_0 P - s_0 Q, bP-aQ)
= \mu^{-2} (s_0 b - t_0a)^2 \Res(P,Q),
\end{align*}
where the last equality is by 
Proposition~\ref{prop:compres}(2).
%Multiplying by $\delta'=\Res(P,Q)\in K$
Multiplying by $\Res(P,Q)\in K^{\times}$
yields that the first desired product is indeed a square in $K$.

For the second product, i.e., for $n\geq \ell+1$, first consider the
case that $\eta_1,\eta_2,\theta_1,\theta_2\in K$.
Then $F^{n-1}(\eta_1,\theta_1)$ and $F^{n-1}(\eta_2,\theta_2)$
are both in $K\times K$, and moreover they describe the same point in $\PK$,
since $n-1\geq \ell$.
Hence, there is some $\mu\in K^\times$ such that
\[ F^{n-1}(\eta_2,\theta_2) = \mu F^{n-1}(\eta_1,\theta_1),
\quad\text{and hence}\quad
F^{n}(\eta_2,\theta_2) = \mu^2 F^{n}(\eta_1,\theta_1). \]
It follows that
\[ \prod_{i=1}^2 H_{n}(\eta_i,\theta_i)
= \mu^2 \big(H_{n}(\eta_1,\theta_1)\big)^2 \in K^2, \]
as desired.

The other case is that 
$(\eta_2,\theta_2)$ is $\Gal(L/K)$-conjugate to $(\eta_1,\theta_1)$,
where $L$ is a quadratic extension of $K$.
Then $F^{n-1}(\eta_1,\theta_1)$ and $F^{n-1}(\eta_2,\theta_2)$
are both in $L\times L$ and are also Galois conjugate.
As in the previous case, they also describe the same point in $\PKbar$,
so this point must be $K$-rational. Hence, there is some
$\mu\in L^{\times}$ and some $a_{n-1},b_{n-1}\in K$ not both zero
such that
\[ F^{n-1}(\eta_1,\theta_1)=\mu (a_{n-1},b_{n-1})
\quad \text{and} \quad
F^{n-1}(\eta_2,\theta_2)=\sigma(\mu) (a_{n-1},b_{n-1}) , \]
where $\sigma$ is the nontrivial element of $\Gal(L/K)$.
Let $\gamma=\mu\sigma(\mu)\in K^{\times}$. Then
\[ \prod_{i=1}^2 H_{n}(\eta_i,\theta_i)
= \gamma^2 \big(t_0 P(a_{n-1},b_{n-1}) - s_0 Q(a_{n-1},b_{n-1} ) \big)^2
\in K^2. \qedhere \]
\end{proof}

Still in the case $d=2$, we also have the following identity.

\begin{prop}
\label{prop:DPQformula}
Let $P,Q\in K[X,Y]$ be relatively prime homogeneous polynomials
of degree~2, with homogeneous differential $D_{P,Q}$.
Then $\Delta(D_{P,Q})=4\Res(P,Q)$.
\end{prop}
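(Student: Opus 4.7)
The plan is to verify the identity by direct coordinate computation, since both sides are explicit polynomial expressions of low degree in the six coefficients of $P$ and $Q$, and the general machinery of Proposition~\ref{prop:compres} and Theorem~\ref{thm:compdisc} does not apply cleanly (since $D_{P,Q}$ is not of the form $\beta P - \alpha Q$ in the pencil spanned by $P,Q$). So I would write
\[ P(X,Y) = AX^2 + BXY + CY^2, \qquad Q(X,Y) = aX^2 + bXY + cY^2, \]
and apply the defining formula $D_{P,Q} = Y^{-1}(P_X Q - P Q_X)$ of Definition~\ref{def:homdiff} to express $D_{P,Q}$ explicitly as a binary quadratic form in $(X,Y)$. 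The coefficients will turn out to be simple $2\times 2$ minors of the coefficient matrix of $(P,Q)$, with the middle coefficient picking up an extra factor of $2$ from differentiating the quadratic middle terms. This factor of $2$ is exactly what will produce the factor of $4$ in the final identity.

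The second step is to compute $\Delta(D_{P,Q})$ from that explicit expression, using the coefficient-level formula $\Delta(\alpha X^2 + \beta XY + \gamma Y^2) = \beta^2 - 4\alpha\gamma$, which follows from Definition~\ref{def:hdisres} by writing the form as $(b_1 X - a_1 Y)(b_2 X - a_2 Y)$ and expanding $(a_1 b_2 - b_1 a_2)^2$ in terms of the elementary symmetric functions of the roots. Separately, I would compute $\Res(P,Q)$ in the same coordinates, either by expanding the $4\times 4$ Sylvester determinant or directly from the product formula of Definition~\ref{def:hdisres}, obtaining the standard expression $\Res(P,Q) = (Ac-Ca)^2 - (Ab-Ba)(Bc-Cb)$. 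Comparing with the expression for $\Delta(D_{P,Q})$ derived in the previous sentence, one sees that the latter is exactly four times the former, giving $\Delta(D_{P,Q}) = 4 \Res(P,Q)$ as claimed.

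No real obstacle arises: the calculation is routine symbolic algebra. The only things to watch are the sign conventions of Definition~\ref{def:hdisres} (the labels of the roots affect intermediate signs, though the final symmetric expressions do not depend on this choice) and the bookkeeping of the middle coefficient $2(Ac-Ca)$ of $D_{P,Q}$, whose appearance is essential for matching the leading constant. The relative-primality hypothesis on $(P,Q)$ plays no active role in the identity itself; it is needed only to guarantee that $D_{P,Q}$ is well-defined as in Definition~\ref{def:homdiff}.
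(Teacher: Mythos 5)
Your proposal is correct and matches the paper's own proof, which is exactly this brute-force coefficient computation: writing $P=a_0X^2+a_1XY+a_2Y^2$, $Q=b_0X^2+b_1XY+b_2Y^2$, both sides come out to $4(a_2b_0-a_0b_2)^2-4(a_2b_1-a_1b_2)(a_1b_0-a_0b_1)$. Your explicit identification of $D_{P,Q}$ with middle coefficient $2(a_0b_2-a_2b_0)$ and the comparison with the standard resultant formula is the same calculation, just spelled out in slightly more detail.
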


\begin{proof}
This is a brute-force calculation. Writing
\[ P=a_0 X^2 + a_1 XY + a_2 Y^2 \quad\text{and}\quad
Q=b_0 X^2 + b_1 XY + b_2 Y^2 , \]
direct computation
shows that both sides of the desired identity are equal to
\[ 4(a_2 b_0 - a_0 b_2)^2 - 4(a_2 b_1 - a_1 b_2)(a_1 b_0 - a_0 b_1).
\qedhere \]
\end{proof}

\begin{remark}
\label{rem:poly}
Corollary~\ref{cor:iterdisc} also yields the following
formula for the (nonhomogeneous) discriminants 
of iterated polynomials.
Let $f(z)\in K[z]$ be a polynomial of degree $d\geq 2$
with lead coefficient $A\in K^{\times}$, and let $x_0\in K$.
Then for every $n\geq 1$, we have
\begin{equation}
\label{eq:polyiter}
\Delta\big(f^n- x_0 \big)
= (-1)^{d^n (d-1)/2} d^{d^n} A^{d^{2n-1}-1}
\big( \Delta(f^{n-1}-x_0)\big)^d
\prod_{f'(c)=0} \big( f^n(c)-x_0 \big),
\end{equation}
where the product is over all finite critical points of $f$,
repeated according to multiplicity.

Indeed, lifting $f$ to homogeneous coordinates as
$F[X,Y] = (P,Q)=(Y^d f(X/Y),Y^d)$, simple computations yield
\[ \Res(P,Q) = A^d \quad\text{and}\quad
D=D_{P,Q}=dA Y^{d-1} \prod_{f'(c)=0} (X-cY).\]
Writing $H_n=P_n -x_0 Y^{d^n}$, where $P_n=Y^{d^n} f^n(X/Y)$,
we have $\Delta(H_n) = \Delta(f^n-x_0)$
by equation~\eqref{eq:olddisc}, and it is easy to check that the
$\prod H_n(\eta_i,\theta_i)$ term in Corollary~\ref{cor:iterdisc}
becomes
\[ d^{d^n} A^{2d^n-1} \prod_{f'(c)=0} \big( f^n(c)-x_0 \big) .\]
The desired equation is then immediate from Corollary~\ref{cor:iterdisc}.

A variant of formula~\eqref{eq:polyiter} appeared in
\cite[Proposition~3.2]{AHM}, and a version similar to~\eqref{eq:polyiter}
appeared with an incorrect power of the lead coefficient
in \cite[Equation~(1)]{BenJuu}.
\end{remark}

\section{Colliding critical points and $M_{\infty}$}
\label{sec:collideM}
We are now prepared to prove our first main result, Theorem~\ref{thm:main1}.
The central goal of the proof is to produce a labeling of the tree of preimages
with respect to which every $\sigma\in G_{\infty}$
acts as an element of $\widetilde{M}_{\ell}$ or $M_{\ell}$.
After some preliminaries, our strategy will be to start
with a completely arbitrary labeling of the tree,
and then to make successive
changes to the labeling until it has this desired property.

\begin{proof}[Proof of Theorem~\ref{thm:main1}]
%\label{prf: main1}
Writing $f=p/q$ where $p,q\in K[z]$ with $\max\{\deg p, \deg q\}=2$,
define $P,Q\in K[X,Y]$ by 
\[P(X,Y):=Y^2 p(X/Y)  \quad \text{ and } \quad Q(X,Y):=Y^2 q(X/Y).\]
Thus, $F:=(P,Q)$ is a homogenization of $f$,
and the homogeneous differential $D=D_{P,Q}$ may be factored as
\[D(X,Y)=c (\theta_1 X - \eta_1 Y)(\theta_2 X - \eta_2 Y),\]
with $c\in K^{\times}$ and $\xi_i=\eta_i/\theta_i$ for $i=1,2$.
(Here, we understand the latter expression to be the point at $\infty$ if $\theta_i=0$.)

%Let $\delta'=\Res(P,Q)\in K^{\times}$.
Any point $x$ in the backward orbit $\Orb_f^-(x_0)$ corresponds to a node
of the tree $T_{\infty}$, and we also call this node $x$. Writing
$x=[s,t]\in\PKbar$ with $s,t\in K(x)$, define
\[H_{x,\ell}:=t P_\ell - s Q_\ell\in K(x)[X,Y],\]
where $P_\ell$ and $Q_\ell$ are the coordinate functions of $F^\ell$,
i.e., where $F^{\ell} = (P_\ell,Q_\ell)$.

Choose any labeling of the tree $T_{\infty}$.
In the rest of the proof, we will make successive changes to
this labeling until it successfully exhibits $G_{\infty}$ as a subgroup
of $\widetilde{M}_{\ell}$ or $M_{\ell}$.

\smallskip

\textbf{Case 1}. If $\delta$ is a square in $K$,
then $\xi_1,\xi_2\in\PK$. Thus, we may assume that
$\eta_1,\eta_2,\theta_1,\theta_2\in K$.
The discriminant $\Delta(D_{P,Q})$ is therefore also a square in $K$,
and hence, by Proposition~\ref{prop:DPQformula}, so is
$\Res(P,Q)=4\Delta(D_{P,Q})$.
%$\delta':=\Res(P,Q)=4\delta$ is also a square in $K$.
%$\delta=\Delta(D_{P,Q})/4 = c^2(\eta_1\theta_2 - \eta_2\theta_1)^2/4$ is a square in $K$.

With $d=2$ in equation~\eqref{eq:iterdisc} of Corollary~\ref{cor:iterdisc},
it follows that for any $x\in \Orb_f^-(x_0)$,
the discriminant $\Delta(H_{x,\ell})$ is a square in $K(x)$.
Then by Theorem~\ref{thm:discsquare},
using the fact that $\Res(P,Q) \in K^2$, we also have
%using the fact that $\delta'=\Res(P,Q) \in K^2$, we also have
\[ \prod_{i=1}^2 H_{x,\ell}(\eta_i,\theta_i)\in K(x)^2 . \]

We may also write
\[ H_{x,\ell} = \prod_{i=1}^{2^\ell} (\beta_{\ell,i} X - \alpha_{\ell,i} Y), \]
where $\{ [\alpha_{\ell,i},\beta_{\ell,i}] \, | \, i=1,\ldots, 2^{\ell} \}$ are the preimages
in $\PKbar$ of $x$ under $f^\ell$.
For any $\sigma\in G_{\infty}$, it follows that
\begin{equation}
\label{eq:sigmaeven}
\text{if } \sigma(x)=x, \quad\text{then}\quad \sgn_\ell(\sigma,x)=+1,
\end{equation}
because any such $\sigma$ permutes these $2^{\ell}$ preimages,
and it does so with even parity,
since $\Delta(H_{x,\ell})$ is a square in $K(x)$.

In particular, implication~\eqref{eq:sigmaeven} shows that $\sgn_\ell(\sigma,x_0)=+1$
for every $\sigma\in G_{\infty}$, since each such $\sigma$ fixes $x_0\in\PK$.
We will now proceed inductively up the tree, making adjustments to the labeling
as we go. For any $m\geq 1$,
suppose that we have already verified that for all nodes $x$ up to level $m-1$,
we have $\sgn_\ell(\sigma,x)=+1$ for all $\sigma\in G_{\infty}$.

Given a node $y$ at level $m$, its Galois orbit $G_{\infty}(y)$
consists of nodes at the same level $m$ of the tree.
For each such node $w\in G_{\infty}(y)$, choose $\sigma_w\in G_{\infty}$
such that $\sigma_w(y)=w$. Note that if $\sigma'_w\in G_{\infty}$
also satisfies $\sigma'_w(y)=w$, then
\[ \sgn_{\ell}(\sigma'_w,y) = \sgn_{\ell}(\sigma_w,y),
\quad\text{since} \quad
\sgn_{\ell}(\sigma_w^{-1}\sigma'_w, y)=+1 \]
by implication~\eqref{eq:sigmaeven}. Define
\[ W_y := \{ w\in G_{\infty}(y) \, | \, \sgn_\ell(\sigma_w,y)= -1 \}. \]
Observe that $y\not\in W_y$, again by implication~\eqref{eq:sigmaeven}.

We now modify the labeling.
For each node $w\in W_y$, transpose the labels of two nodes that lie $\ell$ levels
above $w$ (and which share the same parent $\ell-1$ levels above $w$).
Since we made no change to
the labeling above $y$ but made this single transposition above $w$,
the new labeling now gives $\sgn_\ell(\sigma_w,y)=+1$.

Moreover, for any two nodes $w$ and $z$ in the Galois orbit $G_{\infty}(y)$,
and for any $\rho\in G_{\infty}$ with $\rho(w)=z$,
we claim that $\sgn_{\ell}(\rho,w)=+1$ under this new labeling.
Indeed, the Galois automorphism
\[ \lambda:=\sigma_z^{-1}\circ\rho\circ\sigma_w \]
fixes $y$
and hence, by implication~\eqref{eq:sigmaeven},
must have sign $\sgn_\ell(\lambda,y)=+1$.
In addition, by the previous paragraph, the signs
$\sgn_{\ell}(\sigma_w,y)$ and $\sgn_{\ell}(\sigma_z,y)$ are also both $+1$.
Hence, $\sgn_{\ell}(\sigma_w^{-1},w)=+1$ as well. Therefore, by
equation~\eqref{eq:Parident}, we have
\begin{align*}
\sgn_{\ell}(\rho,w) &= \sgn_{\ell}(\sigma_z \lambda \sigma_w^{-1}, w)
=\sgn_{\ell}(\sigma_z,y) \cdot \sgn_{\ell}(\lambda, y) \cdot \sgn_{\ell}(\sigma_w^{-1},w)
\\
& = (+1) \cdot(+1) \cdot(+1) = +1,
\end{align*}
proving our claim that $\sgn_{\ell}(\rho,w)=+1$.

Repeat this relabeling for each of the (finitely many) Galois orbits
among the $2^m$ nodes at level $m$. That is, for each such orbit,
choose a node $y$ in the orbit, define the set $W_y$ as above,
and adjust the labels above each $w\in W_y$. Having completed this process
for each Galois orbit at level $m$, it follows that for any node $x$
at level $m$ and any $\sigma\in G_{\infty}$, we have
$\sgn_{\ell}(\sigma,x)=+1$.
Hence, extending this relabeling inductively up the tree,
we have $G_{\infty}\subseteq M_{\ell}$,
proving the reverse implication of part~(2) of the theorem.

\smallskip

\textbf{Case 2}. If $\delta$ is not a square in $K$, then
the critical points $\xi_1,\xi_2$
are Galois conjugate and defined over the quadratic extension
$L:= K(\sqrt{\delta})$ of $K$.
In addition, $\Res(P,Q)$ is not a square in $K$, since
as in Case~1, it is a square times $\delta$.
%By Proposition~\ref{prop:DPQformula}, this extension must be 

Defining $H_{x,\ell}$ as in Case~1, observe that Corollary~\ref{cor:iterdisc}
and Theorem~\ref{thm:discsquare} together show that
for any $x\in\PKbar$, the quantity $\delta \Delta(H_{x,\ell})$ is a square in $K(x)$.
Since $\sqrt{\Delta(H_{x,\ell})}$ is an arithmetic combination of the points
in $f^{-\ell}(x)$ (see Definition~\ref{def:hdisres} and equation~\eqref{eq:olddisc}),
it follows that $\sqrt{\delta}\in K(x)_{\ell}$,
where $K(x)_{\ell}$ is the extension of $K(x)$ obtained by adjoining $f^{-\ell}(x)$.
In particular,
\[ \sqrt{\delta}\in K(x_0)_{\ell}=K_{\ell}\subseteq K_{\infty} .\]
Therefore, it makes sense to define $G'_{\infty}:=\Gal(K_{\infty}/L)$,
which is a subgroup of $G_{\infty}$ of index~2.

Fix $\tau\in G_{\infty}\smallsetminus G'_{\infty}$.
Note that the two cosets
of $G'_{\infty}$ in $G_{\infty}$ are
\[ G'_{\infty} = \{\sigma\in G_{\infty} \, | \, \sigma(\sqrt{\delta}) = \sqrt{\delta} \}
\quad\text{and}\quad
G'_{\infty}\tau = \{\sigma\in G_{\infty} \, | \, \sigma(\sqrt{\delta}) = -\sqrt{\delta} \}. \]

Applying Case~1 with $L$ in place of $K$ and $G'_{\infty}$ in place of $G_{\infty}$,
we may label the tree so that $G'_{\infty}\subseteq M_{\ell}$.
Even after this relabeling, note that $\sgn_{\ell}(\tau,x_0)=-1$,
since, as noted above, $\Delta(H_{x_0,\ell})$ is $\delta$ times a square in $K(x_0)=K$,
and hence $\tau$ must map this discriminant to its negative, meaning that
it acts as an odd permutation on the $2^{\ell}$ points of $f^{-\ell}(x_0)$.
In particular, we have $\tau\in G_{\infty}\smallsetminus M_{\ell}$,
proving the forward implication of part~(2) of the theorem.

We will now make some further adjustments to this labeling.
For any $m\geq 1$,
suppose that we have already verified that for all nodes $x$ up to level $m-1$,
we have
\[ \sgn_\ell(\sigma,x)=\sgn_\ell(\sigma,x_0) \quad\text{for all}\quad \sigma\in G_{\infty}, \]
a condition which holds vacuously at level $0$.
Given a node $y$ at level $m$, the Galois orbit $G'_{\infty}(y)$
either coincides with $G_{\infty}(y)$ or is a subset of exactly half the size,
since $[G_{\infty}:G'_{\infty}]=2$.

In the former case, for every node $w$ in the same orbit $G'_{\infty}(y)=G_{\infty}(y)$,
there is some $\rho_w\in G'_{\infty}$ such that $\rho_w(w)=\tau(w)$.
Note that $\rho_w^{-1}\tau$ maps $\sqrt{\delta}$ to its negative and hence also maps
$\sqrt{\Delta(H_{w,\ell})}$ to its negative, as once again,
$\Delta(H_{w,\ell})$ is $\delta$ times a square in $K(w)$.
Since $\rho_w^{-1}\tau$ fixes $w$, it follows that
$\sgn_{\ell}(\rho_w^{-1}\tau,w)= -1$. Thus, by equation~\eqref{eq:Parident},
for any $\sigma\in G'_{\infty}$, we have
\begin{align*}
\sgn_{\ell}(\sigma\tau,w) &=
\sgn_{\ell}(\sigma,\tau(w)) \cdot \sgn_{\ell}(\rho_w, w) \cdot \sgn_{\ell}(\rho_w^{-1}\tau,w)
= (+1) \cdot (+1) \cdot (-1)
\\
& = (+1) \cdot (-1)
= \sgn_{\ell}(\sigma,x_0) \cdot \sgn_{\ell}(\tau,x_0) = \sgn_{\ell}(\sigma\tau,x_0).
\end{align*}
Hence, for every $\sigma$ in either of the two cosets of $G'_{\infty}$ in $G_{\infty}$,
we have $\sgn_{\ell}(\sigma,w) =\sgn_{\ell}(\sigma,x_0)$, as desired.

In the latter case, the Galois orbit $G_{\infty}(y)$ is the disjoint union
of $V_{y,0}:=G'_{\infty}(y)$ and $V_{y,1}:=G'_{\infty}\tau(y)$.
If $\sgn_{\ell}(\tau,y)=+1$, then for each node $w\in V_{y,1}$,
transpose the labels of two nodes that lie $\ell$ levels above $w$
and share the same parent, as we did in Case~1.
Since every $\sigma\in G'_{\infty}$ maps the set of nodes $V_{y,1}$ to itself,
this change in labeling preserves the inclusion $G'_{\infty}\subseteq M_{\ell}$,
but now we may assume that $\sgn_{\ell}(\tau,y)=-1$.

Therefore, for any $\sigma_1,\sigma_2\in G'_{\infty}\subseteq M_{\ell}$,
equation~\eqref{eq:Parident} yields
\begin{align}
\label{eq:oddsum}
\sgn_{\ell}(\sigma_1 \tau \sigma_2^{-1}, \sigma_2(y))
& = \sgn_{\ell}(\sigma_1, \tau(y)) \cdot
\sgn_{\ell}(\tau,y) \cdot \sgn_{\ell}(\sigma_2^{-1}, \sigma_2(y))
\\
&= (+1) \cdot (-1) \cdot (+1) = -1 . \notag
\end{align}
Given any $w\in V_{y,0}$ and any $\rho\in G'_{\infty}\tau$,
we may write $w=\sigma_2(y)$ and $\rho=\sigma_1\tau\sigma_2^{-1}$
for some $\sigma_1,\sigma_2\in G'_{\infty}$.
Hence, equation~\eqref{eq:oddsum} becomes
\begin{equation}
\label{eq:oddsum2}
\sgn_{\ell}(\rho,w) = -1 = \sgn_{\ell}(\rho,x_0).
\end{equation}
In addition, given any $w\in V_{y,1}$ and any $\rho\in G'_{\infty}\tau$,
then applying equation~\eqref{eq:oddsum2} to $\rho^{-1}$, with $\rho(w)$ in the role of $w$,
gives us
\[ \sgn_{\ell}(\rho^{-1},\rho(w)) = \sgn_{\ell}(\rho^{-1},x_0) . \]
Taking inverses once again gives $\sgn_{\ell}(\rho,w) = \sgn_{\ell}(\rho,x_0)$;
so we have shown that this equality holds for every
$w$ in the Galois orbit $G_{\infty}(y)$ and any $\rho\in G_{\infty}$.

As in Case~1, repeat this relabeling for each of the Galois orbits
of nodes at level $m$ of the tree. Having completed this process at level $m$,
it follows that for any node $x$ at level $m$ and any $\sigma\in G_{\infty}$,
we have $\sgn(\sigma,x) = \sgn(\sigma,x_0)$.
Hence, extending this relabeling inductively up the tree,
we have $G_{\infty}\subseteq \widetilde{M}_{\ell}$,
proving statement~(1).
\end{proof}

\begin{remark}
Theorem~\ref{thm:main1} is essentially the content of
Theorem~4.9.3 of \cite{Pink2}. We have provided the proof above
both to present a different argument and to illustrate that the result
can be proven directly from our parity- and discriminant-based
descriptions of the groups $M_{\ell}$ and $\widetilde{M}_{\ell}$.
\end{remark}

We close this section with a result describing the action of the finite Galois group $G_n$
on the finite tree $T_n$, and its relationship to the groups
$M_{\ell,n}$ and $\widetilde{M}_{\ell,n}$ of Definition~\ref{def:Mfinite}.
The hypotheses are exactly the same as in Theorem~\ref{thm:main1}.

\begin{cor}
\label{cor:main1}
Let $K$ be a field of characteristic different from $2$, and
let $f\in K(z)$ be a rational function of degree $2$ with critical points
$\xi_1,\xi_2\in\PKbar$.
Let $\delta\in K^{\times}$ be the discriminant of the minimal polynomial
of $\xi_1$ over $K$,
which we understand to be $\delta=1$ if $\xi_1\in\PK$.
Fix $x_0\in\PK$, and let $G_{\infty}$ be the arboreal Galois group for
$f$ over $K$, rooted at $x_0$.
%Suppose that $f^{\ell}(\xi_1)=f^{\ell}(\xi_2)$ and $f^{\ell-1}(\xi_1)\neq f^{\ell-1}(\xi_2)$
%for some integer $\ell\geq 2$.
Suppose that $\xi_1$ and $\xi_2$ collide at the $\ell$-th iterate under $f$,
for some integer $\ell\geq 2$. Let $n\geq 0$ be an integer.
\begin{enumerate}
\item $G_{n}$ is isomorphic to a subgroup of $\widetilde{M}_{\ell,n}$,
via an appropriate labeling of the tree.
\item If $n\geq\ell$, then $G_{n}$ is isomorphic to a subgroup of $M_{\ell,n}$
if and only if $\delta$ is a square in $K$.
\end{enumerate}
\end{cor}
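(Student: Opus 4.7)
The plan is to deduce Corollary~\ref{cor:main1} from Theorem~\ref{thm:main1} by passing from the infinite tree $T_\infty$ to the truncation $T_n$. Any labeling of $T_\infty$ induces a labeling of $T_n$ by restriction, and the restriction map $\Aut(T_\infty) \to \Aut(T_n)$ is a surjective homomorphism that carries $G_\infty$ onto $G_n$. By Definition~\ref{def:Mfinite}, the groups $M_{\ell,n}$ and $\widetilde{M}_{\ell,n}$ are precisely the images of $M_\ell$ and $\widetilde{M}_\ell$ under this same restriction.

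For part~(1), I would apply Theorem~\ref{thm:main1}(1) to obtain a labeling of $T_\infty$ under which $G_\infty \subseteq \widetilde{M}_\ell$; restricting, the induced labeling of $T_n$ then satisfies $G_n \subseteq \widetilde{M}_{\ell,n}$. The forward implication of part~(2) is analogous: if $\delta$ is a square in $K$, the labeling provided by Theorem~\ref{thm:main1}(2) gives $G_\infty \subseteq M_\ell$, and restriction yields $G_n \subseteq M_{\ell,n}$.

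The only direction that requires a genuinely new argument is the converse of part~(2). Suppose $n \geq \ell$ and that some labeling of $T_n$ exhibits $G_n \subseteq M_{\ell,n}$; I claim $\delta \in K^2$. The key point is that $\sgn_\ell(\sigma, x_0)$ is independent of the chosen labeling: since every tree automorphism fixes the root, any two labelings of the level-$\ell$ nodes above $x_0$ differ by an element of $S_{2^\ell}$ that conjugates the induced permutation without changing its sign. Because $n \geq \ell$, this sign is already determined by the restriction of $\sigma$ to $T_n$, so the hypothesis $G_n \subseteq M_{\ell,n}$ forces $\sgn_\ell(\sigma, x_0) = +1$ for every $\sigma \in G_\infty$.

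To derive the desired contradiction, suppose $\delta \notin K^2$. As in Case~2 of the proof of Theorem~\ref{thm:main1}, combining Corollary~\ref{cor:iterdisc} and Theorem~\ref{thm:discsquare} shows that $\Delta(H_{x_0,\ell})$ equals $\delta$ times a square in $K$, so $\sqrt{\delta} \in K_\ell \subseteq K_n$, and there exists $\tau \in G_\infty$ sending $\sqrt{\delta}$ to $-\sqrt{\delta}$. Such a $\tau$ must permute the $2^\ell$ preimages in $f^{-\ell}(x_0)$ with odd parity, yielding $\sgn_\ell(\tau, x_0) = -1$, a contradiction. The main (and only) obstacle is isolating the labeling-independence of the root sign; once that is in hand, the rest of the argument is a routine descent from the infinite-level theorem.
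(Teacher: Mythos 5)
Your proposal is correct and follows essentially the same route as the paper: both arguments reduce part~(2) to the observation that for $n\geq\ell$ the sign $\sgn_\ell(\sigma,x_0)$ is already determined by (and labeling-independently visible in) the restriction to $T_n$, so that $G_n\subseteq M_{\ell,n}$ is equivalent to $G_\infty\subseteq M_\ell$. The only cosmetic difference is that where the paper simply cites Theorem~\ref{thm:main1}(2) for the converse, you re-run its Case~2 discriminant argument producing $\tau$ with $\sgn_\ell(\tau,x_0)=-1$, and you make explicit the labeling-independence of the root sign that the paper leaves implicit.
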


\begin{proof}
Recall that throughout this paper,
we understand an isomorphism between two groups that act on a tree to be
an equivariant isomorphism, as described in the introduction.
Therefore, statement~(1) of Corollary~\ref{cor:main1} is immediate from
statement~(1) of Theorem~\ref{thm:main1}.

For statement~(2), since we have assumed $n\geq \ell$, we have
that $G_n$ is a subgroup of $M_{\ell,n}$ if and only if
every $\overline{\sigma}\in G_n$ satisfies $\sgn_{\ell}(\overline{\sigma},x_0)=+1$.
By the definition $\widetilde{M}_{\ell,n}$ via restriction of elements of $\widetilde{M}_{\ell}$,
this latter condition holds if and only if 
every $\sigma\in G_\infty$ satisfies $\sgn_{\ell}(\sigma,x_0)=+1$.
Thus, $G_n$ is a subgroup of $M_{\ell,n}$ if and only if
$G_\infty$ is a subgroup of $M_{\ell}$;
by statement~(2) of Theorem~\ref{thm:main1},
this occurs if and only if $\delta$ is a square in $K$.
\end{proof}

\section{Odd cousins}
\label{sec:cousins}

In this section, we investigate certain aspects of the group $M_{\ell}$.

\begin{defin}
\label{def:cousins}
Fix a labeling on the tree $T_{\infty}$ and an integer $\ell\geq 2$,
and let $M_{\ell}$ be the associated subgroup of $\Aut(T_{\infty})$,
as in Definition~\ref{def:PinkGroup}.
Let $\sigma\in M_{\ell}$.
\begin{enumerate}
\item Let $y_0,y_1$ be the two children of a node $w$,
i.e., the two nodes connected to $w$
on the level above $w$.
We say that $\sigma$
\[ \text{\emph{acts $\ell$-positively above $w$} if }
\sgn_{\ell -1}(\sigma,y_0) = \sgn_{\ell -1}(\sigma,y_1) = +1, \]
or that $\sigma$
\[ \text{\emph{acts $\ell$-negatively above $w$} if }
\sgn_{\ell -1}(\sigma,y_0) = \sgn_{\ell -1}(\sigma,y_1) = -1. \]
\item
Let $n\geq \ell$, let $m:=2^{n-\ell}$, and
let $w_1,\ldots,w_m$ be the nodes that lie $n-\ell$ levels above
a node $x$.
We say that $\sigma$ is an \emph{$(\ell,n)$-odd cousins map above $x$}
if the set
\[ \{i\in\{1,\ldots, m \} \, | \, \sigma \text{ acts $\ell$-negatively above } w_i  \} \]
has odd cardinality. Otherwise, we say 
$\sigma$ is an \emph{$(\ell,n)$-even cousins map above $x$}.
\end{enumerate}
\end{defin}
When restricting to the finite subtree $T_n$,
we define odd cousins and even cousins maps $\sigma\in M_{\ell,n}$ similarly.

Note that the two signs $\sgn_{\ell -1}(\sigma,y_0)$
and $\sgn_{\ell -1}(\sigma,y_1)$ in Definition~\ref{def:cousins}(1) must indeed be equal.
To see this, suppose $y_0$ has label $w0$, and $y_1$ has label $w1$.
If $\sgn_1(\sigma,w)=+1$, meaning that
$\sigma(w0)=\sigma(w)0$ and $\sigma(w1)=\sigma(w)1$,
then $\sigma$ permutes the labels (each of length $\ell-1$)
of the $2^{\ell-1}$ nodes above $w0$ with parity given by $\sgn_{\ell-1}(\sigma,y_0)$,
and $\sigma$ separately permutes the labels
of the $2^{\ell-1}$ nodes above $w1$ with parity given by $\sgn_{\ell-1}(\sigma,y_1)$.
Therefore,
\[ \sgn_{\ell-1}(\sigma,y_0) \cdot \sgn_{\ell-1}(\sigma,y_1) = \sgn_{\ell}(\sigma,w)=+1, \]
where the second equality is because $\sigma\in M_{\ell}$.
On the other hand, if $\sgn_1(\sigma,w)=-1$, so that
$\sigma(w0)=\sigma(w)1$ and $\sigma(w1)=\sigma(w)0$,
then define $\lambda$ to be the automorphism of the tree $T_{\ell}$ rooted at $w$
given by $\lambda(w0S)=w1S$ and $\lambda(w1S)=w0S$, for any string of symbols
$S\in\{0,1\}^{\ell-1}$ of length $\ell-1$.
(That is, $\lambda$ simply swaps the two subtrees $T_{\ell-1}$ rooted at $y_0$ and $y_1$.)
Note that $\lambda$ acts $\ell$ levels above $w$
by $2^{\ell-1}$ transpositions of labels of length $\ell$,
and hence $\sgn_{\ell}(\lambda,w)=+1$, since $\ell\geq 2$.
In addition, $\sgn_1(\sigma\lambda)=+1$, so we may apply the
result of the previous case, yielding
\[ \sgn_{\ell-1}(\sigma,y_0) \cdot \sgn_{\ell-1}(\sigma,y_1) =
\sgn_{\ell}(\sigma\lambda,w) = \sgn_{\ell}(\sigma,w)\cdot \sgn_{\ell}(\lambda,w) = +1 . \]
Thus, in either case, we do indeed have
$\sgn_{\ell -1}(\sigma,y_0)=\sgn_{\ell -1}(\sigma,y_1)$.

As for part~(2) of Definition~\ref{def:cousins}, observe 
that in the language of parent and child nodes, the four nodes
$x00$, $x01$, $x10$, and $x11$ lying $2$ levels above a node $x$ form a set of cousins,
since they share a common grandparent $x$.
(More generally, the $2^{\ell}$ nodes that are $\ell$ levels above $x$ form a set
of $(\ell-1)$-th cousins, in this same family tree analogy.)
For any $n\geq \ell$, the $2^n$ nodes that lie $n$ levels above $x$
are naturally partitioned into $m:=2^{n-\ell}$ sets of $2^{\ell}$ nodes, with one
such set above each of the nodes $w_1,\ldots,w_m$
in Definition~\ref{def:cousins}(2); and $\sigma\in M_{\ell}$ must act with even
parity on each of these $m$ sets.

\begin{example}
\label{ex:oddcousins}
We illustrate a $(2,3)$-odd cousins map $\sigma$ above a node $x$
in Figure~\ref{fig:oddcousins}, restricted to $T_3$.
Observe that $\sgn_2(\sigma,0)=\sgn_2(\sigma,1)=+1$
as part of the restriction that $\sigma\in M_2$.
However, $\sigma$ acts $2$-negatively above $0$
(since for both of the children $y$ of $0$,
we have  $\sgn_1(\sigma,y)=-1$), but $2$-positively above $1$.
%whereas for each of the $m=2$ nodes $w$ one level above $1$,
%we have  $\sgn_1(\sigma,w)=$even.
Since $\sigma$ acts $2$-negatively above an odd number of these
$m=2^{3-2}=2$ nodes $w=0,1$, it is indeed
a $(2,3)$-odd cousins map above $x$.
\end{example}

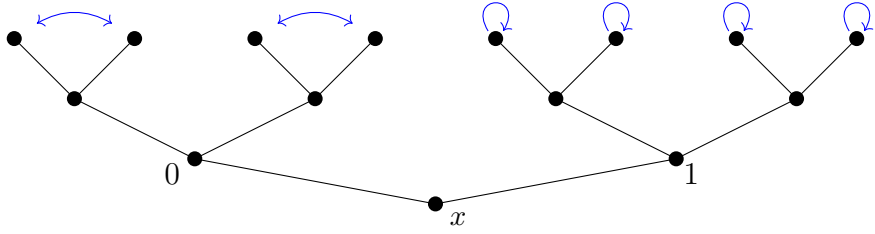
\begin{figure}
\begin{tikzpicture}
\path[draw] (0.8,2.4) -- (1.6,1.6) -- (2.4,2.4);
\path[draw] (4,2.4) -- (4.8,1.6) -- (5.6,2.4);
\path[draw] (7.2,2.4) -- (8,1.6) -- (8.8,2.4);
\path[draw] (10.4,2.4) -- (11.2,1.6) -- (12,2.4);
\path[fill] (0.8,2.4) circle (0.1);
\path[fill] (2.4,2.4) circle (0.1);
\path[fill] (4,2.4) circle (0.1);
\path[fill] (5.6,2.4) circle (0.1);
\path[fill] (7.2,2.4) circle (0.1);
\path[fill] (8.8,2.4) circle (0.1);
\path[fill] (10.4,2.4) circle (0.1);
\path[fill] (12,2.4) circle (0.1);
\path[draw] (1.6,1.6) -- (3.2,0.8) -- (4.8,1.6);
\path[draw] (8,1.6) -- (9.6,0.8) -- (11.2,1.6);
\path[fill] (1.6,1.6) circle (0.1);
\path[fill] (4.8,1.6) circle (0.1);
\path[fill] (8,1.6) circle (0.1);
\path[fill] (11.2,1.6) circle (0.1);
\path[draw] (3.2,0.8) -- (6.4,0.2) -- (9.6,0.8);
\path[fill] (3.2,0.8) circle (0.1);
\path[fill] (9.6,0.8) circle (0.1);
\path[fill] (6.4,0.2) circle (0.1);
%\node (x0) at (6.75,0) {\small $()$};
\node (x0) at (6.7,0) {\small $x$};
\node (a0) at (2.9,0.6) {$0$};
\node (b0) at (9.8,0.6) {$1$};
\draw[<->,blue] (1.1,2.6) to[out=30,in=150] (2.1,2.6);
\draw[<->,blue] (4.3,2.6) to[out=30,in=150] (5.3,2.6);
\draw[->,blue] (7.1,2.5) .. controls (6.8,3) and (7.6,3) .. (7.3,2.5);
\draw[->,blue] (8.7,2.5) .. controls (8.4,3) and (9.2,3) .. (8.9,2.5);
\draw[->,blue] (10.3,2.5) .. controls (10,3) and (10.8,3) .. (10.5,2.5);
\draw[->,blue] (11.9,2.5) .. controls (11.6,3) and (12.4,3) .. (12.1,2.5);
%\node (a00) at (1.25,1.4) {$00$};
%\node (b01) at (5.1,1.4) {$01$};
%\node (a10) at (7.6,1.4) {$10$};
%\node (b11) at (11.5,1.4) {$11$};
%\node (a000) at (0.75,2.7) {$000$};
%\node (a001) at (2.4,2.7) {$001$};
%\node (a010) at (3.95,2.7) {$010$};
%\node (a011) at (5.6,2.7) {$011$};
%\node (a100) at (7.15,2.7) {$100$};
%\node (a101) at (8.75,2.7) {$101$};
%\node (a110) at (10.35,2.7) {$110$};
%\node (a111) at (11.95,2.7) {$111$};
\end{tikzpicture}
\caption{Example~\ref{ex:oddcousins}: a $(2,3)$-odd cousins map above $x$ on $T_3$}
\label{fig:oddcousins}
\end{figure}

The following result is a byproduct of the proof of \cite[Lemma~1.6]{Stoll}, but we include
a self-contained proof here for the convenience of the reader, given our slightly different context.

\begin{prop}
\label{prop:generate}
Let $n\geq 1$, and fix a labeling on the tree $T_n$.
Let $G\subseteq \Aut(T_n)$ be a subgroup.
Suppose the quotient of $G$ formed by restricting to the subtree $T_{n-1}$
is the full group $\Aut(T_{n-1})$. Suppose further that there exists $\sigma_n\in G$
that acts trivially on $T_{n-1}$ and in addition satisfies $\sgn_n(\sigma_n,x_0)=-1$.
Then $G=\Aut(T_n)$.
\end{prop}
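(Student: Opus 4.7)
The plan is to reduce the claim to showing that $N = \mathcal{K}$, where $N := \ker(G \to \Aut(T_{n-1}))$ and $\mathcal{K} := \ker(\Aut(T_n) \to \Aut(T_{n-1}))$. Since the restriction $G \to \Aut(T_{n-1})$ is surjective by hypothesis, we have $|G|/|N| = |\Aut(T_{n-1})|$, so $N = \mathcal{K}$ would give $|G| = |\mathcal{K}| \cdot |\Aut(T_{n-1})| = |\Aut(T_n)|$, forcing $G = \Aut(T_n)$. The group $\mathcal{K}$ is elementary abelian of order $2^{2^{n-1}}$, naturally identified with $\FF_2^{X}$ where $X$ is the set of level-$(n-1)$ nodes: the coordinate at a node $w$ records whether the two children of $w$ get swapped. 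Since $N$ is normal in $G$ and abelian, the conjugation action of $G$ on $N$ factors through $\Aut(T_{n-1})$, acting on $\FF_2^X$ by permuting coordinates via its natural action on $X$. Thus $N$ is an $\Aut(T_{n-1})$-invariant $\FF_2$-subspace of $\mathcal{K}$. Moreover, $\sigma_n \in N$ is trivial on $T_{n-1}$ and swaps some number $k$ of sibling pairs at level $n$; such a permutation has sign $(-1)^k$, so the hypothesis $\sgn_n(\sigma_n,x_0)=-1$ forces $k$ to be odd, i.e., $\sigma_n$ corresponds to a vector of odd weight in $\FF_2^X$.

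It therefore suffices to prove the following lemma by induction on $m \geq 0$: any $\Aut(T_m)$-invariant $\FF_2$-subspace of $V := \FF_2^{\{0,1\}^m}$ containing an odd-weight vector must equal $V$. The base case is trivial. For the inductive step, let $N \subseteq V$ be such a subspace. Decompose $V = V_0 \oplus V_1$ according to the leading bit of a label, write $v = v_0 + v_1 \in N$ with, say, $v_0$ of odd weight, and let $H_0 \cong \Aut(T_{m-1})$ act on $V_0$ alone. Then $N$ contains the subspace $W := \operatorname{span}\{v_0 + \tau_0 v_0 : \tau_0 \in H_0\} \subseteq V_0$. By the inductive hypothesis applied to $v_0$, the $H_0$-span of $v_0$ is all of $V_0$; hence in the quotient $V_0/W$ the image of $v_0$ is both a generator and $H_0$-invariant, forcing $\dim V_0/W \leq 1$. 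Since $W \subseteq V_0^{\text{even}}$ (each generator has even weight) but $v_0 \notin V_0^{\text{even}}$, we conclude $W = V_0^{\text{even}}$. Applying the top swap in $\Aut(T_m)$ then puts $V_1^{\text{even}}$ into $N$ as well, so $N$ contains $V_0^{\text{even}} + V_1^{\text{even}}$. For any $j_0$ in the support of $v_0$, the vector $v - e_{j_0}$ lies in this subspace, so $e_{j_0} \in N$; transitivity of $\Aut(T_m)$ on coordinates then yields $N = V$.

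The main obstacle is the lemma above. A naive hope that transitivity of $\Aut(T_{n-1})$ on coordinates alone would force $N = \mathcal{K}$ fails: the $\Aut(T_m)$-module $V$ has a long composition series whose every simple factor is the trivial $\FF_2$-representation in characteristic $2$, so there are many nontrivial invariant subspaces at intermediate layers. The odd-weight hypothesis is what breaks out of this filtration; the key input is the step $v_0 \notin V_0^{\text{even}}$, which rules out the unique codimension-one invariant subspace at each inductive stage and ultimately lets the argument isolate a single basis vector that would otherwise remain trapped in an invariant layer.
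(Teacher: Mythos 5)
Your proof is correct, but the key combinatorial input is established by a genuinely different argument than the paper's. Both proofs reduce the proposition to a statement about the kernel $N$ of $G\to\Aut(T_{n-1})$, viewed inside $\FF_2^S$ (with $S$ the level-$(n-1)$ nodes) as an $\Aut(T_{n-1})$-invariant subspace containing an odd-weight vector coming from $\sigma_n$. The paper then passes to the orthogonal complement $V=W^{\perp}$ and invokes the standard fixed-point phenomenon for $2$-groups acting on $\FF_2$-spaces: all orbits have $2$-power size, so a nontrivial invariant $V$ must contain a second singleton orbit, which by transitivity can only be the all-ones vector, and that contradicts the existence of the odd-sign element. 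You instead prove the ``primal'' dual statement directly --- any $\Aut(T_m)$-invariant subspace of $\FF_2^{\{0,1\}^m}$ containing an odd-weight vector is everything --- by induction on the tree: splitting $V=V_0\oplus V_1$, using differences $v_0+\tau_0 v_0$ to show $N$ contains $V_0^{\mathrm{even}}\oplus V_1^{\mathrm{even}}$, and then extracting a single basis vector $e_{j_0}$ and spreading it around by transitivity. What the paper's route buys is brevity and reusability: the same $W$, $V=W^{\perp}$, orbit-counting framework is recycled almost verbatim (with an extra quotient by the subspace $X$) in the proof of Theorem~\ref{thm:generate}, whereas your induction would need to be redone to handle odd-cousins maps. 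What your route buys is a self-contained, constructive argument that never leaves the subspace $N$ itself, avoids the duality bookkeeping, and isolates a submodule-generation statement (the $\FF_2[\Aut(T_m)]$-module generated by any odd-weight vector is the whole permutation module) that is of some independent interest. Two small points you should make explicit: the reduction ``WLOG $v_0$ has odd weight'' uses that the top swap lies in $\Aut(T_m)$ and preserves $N$, and the identification of the conjugation action of $G$ on $N\subseteq\FF_2^S$ with coordinate permutation via the restriction map requires the (easy, but worth one line) computation that conjugating a level-$n$ swap pattern by $g$ permutes the swap coordinates according to $g|_{T_{n-1}}$; neither affects correctness.
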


%\begin{prop}
%\label{prop:generate}
%Let $n\geq 1$, and fix a labeling on the tree $T_n$.
%Let $G\subseteq \Aut(T_n)$ be a subgroup. Suppose there are
%elements $\sigma_1,\ldots,\sigma_n\in G$ such that for each $1\leq i\leq n$,
%the automorphism $\sigma_i$ acts trivially on the subtree $T_{i-1}$,
%and in addition, $\sgn_i(\sigma_i,x_0)=-1$.
%Then $G=\Aut(T_n)$.
%\end{prop}

\begin{proof}
Let $E$ be the (normal) subgroup of $\sigma\in G$ acting trivially on $T_{n-1}$,
so that $G':=G/E$ is the quotient given by restriction to $T_{n-1}$.
By hypothesis, we have $G'\cong\Aut(T_{n-1})$.
Let $S$ be the set of the $m:=2^{n-1}$ nodes at level $n-1$ of the tree,
and let $\FF_2$ denote the field of 2 elements.
Writing elements of the $m$-dimensional $\FF_2$-vector space $\FF_2^S$
as $v=(v_y)$, where $v_y\in\FF_2$ is the coordinate of $v$ at entry $y\in S$,
the quotient group $G'$ acts on $\FF_2^S$ by $\tau(v)_y = v_{\tau^{-1}(y)}$,
i.e., by permuting the coordinates.

For each $\sigma\in E$, let $w(\sigma)\in \FF_2^S$ be the vector whose $y$-entry
is $0$ if $\sgn_1(\sigma,y)=+1$, or $1$ if $\sgn_1(\sigma,y)=-1$;
that is, $w(\sigma)_y:=(1-\sgn_1(\sigma,y))/2$, viewed as an element of $\FF_2$.
Define $W$ to be the set of vectors $w(\sigma)$, for all $\sigma\in E$.
Then $W$ is a subspace of $\FF_2^S$.
Define
\begin{align}
\label{eq:Vdef}
V :&= \bigg\{ v \in \FF_2^S \, \bigg| \,
\sum_{y\in S} v_y \cdot w(\sigma)_y = 0 \; \forall \, \sigma\in E \bigg\}
%\sum_{y\in S} v_y w(\sigma)_y = 0\text{ for all } \sigma\in E \bigg\}
\\
& =\bigg\{ v \in \FF_2^S \, \bigg| \,
\prod_{y\in S} \sgn_1(\sigma,y)^{v_y} = +1\; \forall \, \sigma\in E \bigg\},
%\prod_{y\in S} \sgn_1(\sigma,y)^{v_y} = 1 \text{ for all } \sigma\in E \bigg\},
\notag
\end{align}
which is the set of all $v\in\FF_2^S$
such that every $\sigma\in E$ is nontrivial above an even number of the nodes
$y\in S$ at which $v_y=1$.

Here, $\cdot$ is the standard dot product, which is a non-degenerate bilinear form on $\FF_2^S$.
Observe that $V$ the orthogonal complement of $W$ in $\FF_2^S$ with respect to $\cdot$,
and hence $V$ is an $\FF_2$-subspace of $\FF_2^S$ satisfying
%, and because it is the orthogonal complement of $W$ with respect to the dot product,
%carved out of $\FF_2^S$ by a set of $\dim W$ linearly independent equations, we have
\[ \dim V + \dim W = |S| .\]
(This identity holds even in characteristic~2, in spite of the fact that $V\cap W$ may be nontrivial.
Indeed, $V$ is the kernel of the $(\dim W)\times |S|$ matrix whose rows are a set of basis vectors for $W$.)
Moreover, $V$ is also
an $\FF_2[G']$-module, because for any $\tau\in G'$ and $v\in V$,
we have$\tau(v)\in V$.
To see this, lift $\tau$ to $G$. Then for any $\sigma\in E$
and $y\in S$, equation~\eqref{eq:Parident} gives
\[ \sgn_1\big(\tau^{-1}\sigma\tau,\tau^{-1}(y)\big)
=\sgn_1(\tau^{-1}, \sigma(y)) \cdot \sgn_1(\sigma,y) \cdot \sgn_1(\tau,\tau^{-1}(y))
= \sgn_1(\sigma,y), \]
where the second equality is because $\sigma(y)=y$, and hence
\[ \sgn_1(\tau^{-1}, \sigma(y)) = \sgn_1(\tau^{-1}, y) =  \sgn_1(\tau,\tau^{-1}(y)) .\]
Therefore,
\[ \prod_{y\in S} \sgn_1(\sigma,y)^{\tau (v)_y}
= \prod_{y\in S} \sgn_1\big(\tau^{-1}\sigma\tau,\tau^{-1}(y)\big)^ {v_{\tau^{-1}(y)}}
= \prod_{y\in S} \sgn_1\big(\tau^{-1}\sigma\tau,y\big)^ {v_y} , \]
which is $+1$ for all $\sigma\in E$, since $v\in V$. Thus, $\tau(v)\in V$;
so $V$ is indeed an $\FF_2[G']$-module.

We claim that $V$ is trivial. To prove the claim, observe
(by the orbit-stabilizer theorem) that for every $v\in V$,
the orbit $G' v$ has cardinality dividing $|G'|=|\Aut(T_{n-1})|$, which is a power of $2$.
Thus, the only way $|G' v|$ can be odd is if $|G' v|=1$, that is, if $v$ is fixed by every
element of $G'$. Observe further that since $G'$ acts transitively on $S$,
the only two elements of $\FF_2^S$ that are fixed by every element of $G'$
are $(0,\ldots,0)$ and $(1,\ldots, 1)$. That is, the only $G'$-orbits in $\FF_2^S$
of odd order are are $\{(0,\ldots 0)\}$ and $\{(1,\ldots 1)\}$.

If $V$ were nontrivial, then $|V|$ would be even, because $V$ is a finite-dimensional
$\FF_2$-vector space.
Partitioning $V$ into $G'$-orbits, and observing that one of those orbits is
the odd-cardinality orbit $\{(0,\ldots 0)\}$, there must be another odd-cardinality orbit in $V$,
and hence $(1,\ldots,1)\in V$.
Therefore, for every $\sigma\in E$, the definition of $V$ yields that
$\sgn_1(\sigma,y) = +1$ for an even number of nodes $y\in S$.
Because each such $\sigma$ fixes all of the nodes at level $n-1$, it follows that
for every $\sigma\in E$, we have $\sgn_n(\sigma,x_0)=+1$,
contradicting the hypothesis about $\sigma_n$.
This contradiction proves our claim.

Thus, we have
\[ \dim W = 0 + \dim W = \dim V + \dim W = |S| = m = 2^{n-1}, \]
and therefore $W$ is the full vector space $\FF_2^S$.
Hence, $E$ is the full group $(\ZZ/2\ZZ)^m$ of all combinations of swaps and non-swaps
above each of the nodes $y\in S$.
Because the quotient $G'=G/E$ is the full group $\Aut(T_{n-1})$,
it follows that $G=\Aut(T_n)$.
\end{proof}

Whereas Proposition~\ref{prop:generate} concerns odd versus even
parities of permutations,
the following concerns only even permutations, but
distinguishing odd cousins from even cousins maps.

\begin{thm}
\label{thm:generate}
Let $n\geq \ell\geq 2$ be integers, fix a labeling on $T_n$,
and let $G\subseteq M_{\ell,n}$ be a subgroup.
Suppose that the quotient of $G$ formed by restricting to the 
subtree $T_{n-1}$ is the full group $M_{\ell,n-1}$.
Suppose further that there exists $\sigma_n\in G$ that acts trivially on $T_{n-1}$
and is an $(\ell,n)$-odd cousins map above $x_0$.
Then $G=M_{\ell,n}$.
\end{thm}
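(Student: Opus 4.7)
The plan is to mirror the proof of Proposition~\ref{prop:generate} as closely as possible, replacing $\Aut(T_n)$ by $M_{\ell,n}$ and replacing "the only sign constraint comes from the top" by "the sign constraints cut out the sub-permutation-module of allowed swap patterns."  Let $E := \ker(G\to M_{\ell,n-1})$, which is surjective by hypothesis, so $G/E \cong M_{\ell,n-1}$. Let $E_{\max} := \ker(M_{\ell,n}\to M_{\ell,n-1})$. Each $\sigma \in E_{\max}$ is determined by a swap-pattern vector $v(\sigma)\in\FF_2^S$, where $S$ is the set of level-$(n-1)$ nodes: $v(\sigma)_y$ records whether $\sigma$ swaps the two children of $y$. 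As computed in Section~\ref{ssec:PinkDes}, the image $W_{\max} := v(E_{\max})$ is the subspace cut out by the linear conditions $\sum_{y\in S_w} v_y = 0$ for each $w\in S^*$, the set of level-$(n-\ell)$ nodes, with $S_w$ denoting the $2^{\ell-1}$ level-$(n-1)$ descendants of $w$. Writing $W := v(E)\subseteq W_{\max}$, the goal is to show $W=W_{\max}$.

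To do so, I would introduce $V := W^\perp \subseteq \FF_2^S$ and $V_{\max} := W_{\max}^\perp$, both taken with respect to the standard dot product. An easy verification shows $V_{\max}$ is spanned by the indicator vectors $f_w$ of the sets $S_w$, and $V\supseteq V_{\max}$. The exact same argument as in Proposition~\ref{prop:generate} (using $E$ normal in $G$) shows $V$ is stable under the permutation action of $G' := M_{\ell,n-1}$ on $\FF_2^S$; the set $V_{\max}$ is likewise $G'$-stable since $G'$ permutes $S^*$ and hence permutes the $f_w$. The desired statement $W=W_{\max}$ is equivalent to $V=V_{\max}$ by a dimension count.

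For the contradiction, suppose $V\supsetneq V_{\max}$. Then $V/V_{\max}$ is a nonzero $\FF_2[G']$-module, and because $|G'| = |M_{\ell,n-1}|$ is a power of $2$ by~\eqref{eq:Msize}, orbit counting yields a nonzero $G'$-fixed element of $V/V_{\max}$; lift it to $v\in V\setminus V_{\max}$ satisfying $\tau(v)-v\in V_{\max}$ for every $\tau\in G'$. To rule this out, I would use the $\sigma_n$ hypothesis as follows: the statement "$\sigma_n$ is an $(\ell,n)$-odd cousins map above $x_0$" translates to $v(\sigma_n)\cdot g = 1$, where $g := \sum_{w\in S^*} g_{w,0}$ is the indicator of the union of left halves $S_{w,0}$. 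Since $E$ is normal in $G$, $W$ contains the whole $G'$-orbit of $v(\sigma_n)$, so $V$ is orthogonal to the $\FF_2[G']$-submodule it generates. Combined with $\tau(v)-v\in V_{\max}$, playing $\tau$ off against swap-type elements of $G'$ that exchange $S_{w,0}$ with $S_{w,1}$ forces each $v|_{S_w}$ to be constant on the two halves up to a common flip, and then cross-$w$ averaging using transitivity of $G'$ on $S^*$ pins $v\bmod V_{\max}$ down to the coset of $g$. A direct computation then gives $v\cdot v(\sigma_n) = \Phi(v(\sigma_n)) = 1\neq 0$, contradicting $v\in V$.

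The hard part will be the last step: the argument that being "$G'$-fixed modulo $V_{\max}$" together with orthogonality to the $G'$-orbit of $v(\sigma_n)$ forces $v\in V_{\max} + \FF_2\cdot g$. This requires exploiting the transitivity properties of $M_{\ell,n-1}$ on the various $S_w$ and on the finer structure $S_{w,0}\sqcup S_{w,1}$; in particular, one needs that the stabilizer $\Stab_{G'}(w)$ has a rich enough action on $S_w$ to force the "quasi-invariant" vectors there to lie in $\{\vec 0_{S_w},\vec 1_{S_w}\}$ away from the half-weight exceptional case, and that this exceptional case is pinned down by a second, independent $G'$-translate. Given that the finite groups $M_{\ell,n-1}$ are well understood via the wreath-product-like description of Section~\ref{ssec:PinkDes}, this transitivity/rigidity analysis is where the bulk of the technical work will lie.
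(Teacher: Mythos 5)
Your proposal is correct and follows essentially the same route as the paper's proof: the same $\FF_2$-linear setup inherited from Proposition~\ref{prop:generate}, with your $V_{\max}$ equal to the paper's subspace $X$ of vectors constant on the blocks $S_u$, the same orbit-counting argument in the quotient module, and the same use of the odd-cousins hypothesis as a nonzero pairing against the half-block indicator (your $g$ agrees with the paper's $\tilde{v}$ up to the all-ones vector, which lies in $V_{\max}$). The ``hard part'' you defer is exactly the paper's Claim~1, and it is settled there by the elementary transitivity argument you anticipate: transpositions inside a half-block $S_{u1}$ fixing a vertex of $S_{u0}$ force a fixed class to be constant on each half-block, and transitivity of $M_{\ell,n-1}$ at level $n-\ell$ then pins the one-element orbits down to $[0]$ and $[\tilde{v}]$.
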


%\begin{thm}
%Fix a labeling on the tree $T_{\infty}$ and an integer $\ell\geq 2$.
%Let $G\subseteq M_\ell$ be a closed subgroup. Suppose there are
%elements $\sigma_1,\sigma_2,\ldots\in G$ such that for each $n\geq 1$,
%the automorphism $\sigma_n$ acts trivially on the subtree $T_{n-1}$,
%and in addition,
%\[ \sgn_n(\sigma_n,x_0)=-1 \quad \text{for } 1\leq n\leq \ell-1, \]
%and 
%\[ \sigma_n \text{ is an $(\ell, n)$-odd cousins map above } x_0
%\quad \text{for } n\geq \ell. \]
%Then $G=M_{\ell}$.
%\end{thm}

\begin{proof}
As in the proof of Proposition~\ref{prop:generate},
let $E$ be the (normal) subgroup of elements $\sigma\in G$ that act trivially on $T_{n-1}$.
Also as before, let $S$ be the set of the $m:=2^{n-1}$ nodes at level $n-1$ of the tree,
let $\FF_2$ denote the field of 2 elements, and for any $v\in\FF_2^S$,
write $v_y\in\FF_2$ for the coordinate of $v$ at entry $y\in S$.
Once again, the quotient $G':=G/E$ given by restriction to $T_{n-1}$
acts on $\FF_2^S$ by permuting the coordinates, and
our hypotheses say that $G'\cong M_{\ell,n-1}$.

Also as in that proof, let
$W$ be the subspace of $\FF_2^S$ consisting of all vectors $w(\sigma)$
given by $w(\sigma)_y:=(1-\sgn_1(\sigma,y))/2$, for all $\sigma\in E$.
Similarly, define the subspace $V$ as in equation~\eqref{eq:Vdef}.
%i.e., as the orthogonal complement of $W$ in $\FF_2^S$.
As before, $V$ and $W$ are $\FF_2[G']$-modules
with $\dim V + \dim W = |S|$.

For each node $u$ at level $n-\ell$ of the tree, let $S_u\subseteq S$
be the set of $2^{\ell-1}$ nodes at level $n-1$ that are above $u$.
Let $X\subseteq \FF_2^S$ be the subspace of vectors that are constant on
each such set $S_u$.
Note that $\dim X=2^{n-\ell}$, since that is the number of nodes at level $n-\ell$.
In addition, $X$ is invariant under the action of $G'$. 

Thus, $G'$ acts on the quotient vector space $\FF_2^S / X$.
As before, for every
\[ [v] :=X+v \in \FF_2^S / X,\]
the orbit $G'[v]$
has cardinality dividing $|G'| = |M_{\ell,n-1}|$, which is a power of $2$.
The $G'$-orbit of the zero vector $[0] \in \FF_2^S / X$ clearly has only one element.

\vspace{6pt}
\noindent\textbf{Claim 1:} There is exactly one nonzero one-element orbit in $\FF_2^S / X$.
%modulo $X$.
\smallskip

To prove Claim~1, we define a particular vector $\tilde{v}\in \FF_2^S$ as follows.
Each node $u$ at level $n-\ell$ of the tree has two children, $u0$ and $u1$,
on level $n-\ell+1$.
Let $S_{u0}$ be the set of $2^{\ell-2}$ nodes at level $n-1$ that lie above $u0$, and 
let $S_{u1}$ be the set of $2^{\ell-2}$ nodes at level $n-1$ that lie above $u1$,
so that $S_u$ is the disjoint union of $S_{u0}$ and $S_{u1}$.
Define $\tilde{v}$ by setting
\[ \tilde{v}_y := \begin{cases}
0 & \text{ if } y\in S_{u0} \text{ for some node } u \text{ at level } n-\ell, \\
1 & \text{ if } y\in S_{u1} \text{ for some node } u \text{ at level } n-\ell,
\end{cases} \]
for each $y\in S$.
% $\tilde{v}_y$ to be $i$ if $y$ belongs to some $S_{ui}$, for $i=0,1$.
%and to be $1$ if $y$ belongs to some $S_{u1}$.
(That is, the coordinates of $\tilde{v}$ consist of alternating blocks of $2^{\ell-2}$ 0's
and of $2^{\ell-2}$ 1's.)
Clearly any $v\in\FF_2^S$ belongs to the coset $[\tilde{v}]$ if and only if $v$ is
constant on each block $S_{ui}$ of $2^{\ell-2}$ nodes,
but not constant on any block $S_u$ of $2^{\ell-1}$ nodes.
In that case, applying any $\sigma\in \Aut(T_{n-1})$ to $v$
--- and hence applying any $\sigma\in G'$ --- yields a vector $\sigma v$ with the same
property. Thus, we have verified that $\sigma [\tilde{v}] = [\tilde{v}]$,
and hence that the orbit $G'[\tilde{v}]$ does indeed have only one element.
Clearly $\tilde{v}\not\in X$, and hence $[\tilde{v}]\neq [0]$.

To finish the proof of Claim~1,
suppose $v'\in\FF_2^S$ has the property that the orbit $G'[v']$ has only one element.
We must show that $[v']$ is either $[0]$ or $[\tilde{v}]$.

We begin by showing
that $v'$ is constant on each block $S_{ui}$ of $2^{\ell-2}$ elements.
To see this, given any node $u$ at level $n-\ell$, pick a node $y\in S_{u0}$.
For any two nodes $z_1,z_2\in S_{u1}$, there is some $\tau\in G'$ that swaps
$z_1$ and $z_2$ but fixes $y$, since $G'$ acts as $\Aut(T_{\ell-1})$
on the copy of $T_{\ell-1}$ rooted at $u$.
Now $\tau v' \in \tau [v'] = [v']$, since the orbit $G'[v']$ has only one element.
The difference $\tau v'- v'$ therefore belongs to $X$ and hence is constant on $S_u$.
But $(\tau v')_y = (v')_y$, so this constant difference is $0$.
Thus, we have
\[ (v')_{z_1} = (\tau v')_{z_1} = (v')_{z_2} .\]
Since this identity holds for all $z_1,z_2\in S_{u1}$, it follows that $v'$ is constant
on $S_{u1}$. By a similar argument, $v'$ is also constant on $S_{u0}$, as desired.

Suppose that there is a node $u$ at level $n-\ell$ for which $v'$ is constant on $S_u$.
Then for any node $t$ at level $n-\ell$, there is some $\tau'\in G$ for which $\tau' u=t$,
because $G'$ acts transitively at that level.
As in the previous paragraph, we have
$\tau'[v']=[v']$, so that $\tau' v' - v'\in X$, and hence $\tau' v'- v'$
is constant on $S_t$. But $\tau' v'$ is also constant on $S_t$, since $v'$ is constant on $S_u$.
Thus, $v'$ itself is constant on $S_t$.
That is, we have shown that if $v'$ is constant on even one block $S_u$ of $2^{\ell-1}$ nodes,
then it is constant on all such blocks.

By the previous two paragraphs, either $v'$ is constant on all blocks $S_u$,
or else on each block $S_u$,
$v'$ takes on one constant value $i\in\{0,1\}$ on $S_{u0}$,
and the other constant value $1-i$ on $S_{u1}$.
In the first of these cases, we have $v'\in X$; in the second, we have $v'-\tilde{v}\in X$.
That is, we have either $[v']=[0]$ or $[v']=[\tilde{v}]$, proving Claim~1.

%Having proven our first claim, we make a second claim:

\vspace{6pt}
\noindent\textbf{Claim 2:} The space $V$ is $V = X$, i.e. $V/X$ is trivial.
\smallskip

To see this, suppose not. Then $|V/X|$ is even,
since it is a nontrivial power of $2$.
Yet again following the proof of Proposition~\ref{prop:generate},
partition $V/X$ into $G$-orbits. Since one of those orbits is the single-element
$G'[0]$, some other orbit in $V$ must also have odd cardinality.
But as we noted, all orbits are a power of $2$ in cardinality, and hence $V/X$
must contain the other single-element orbit $[\tilde{v}]$. That is,
$\tilde{v}\in V$. Our hypotheses state that $E$ contains
an $(\ell,n)$-odd cousins map $\lambda$. In particular,
there are an odd number of sets $S_u$ for which $\lambda$
is nontrivial above an odd number of nodes of $S_{u1}$.
It follows that
\[ \prod_{y\in S} \sgn_1(\lambda,y)^{\tilde{v}_y}
= \prod_{u} \prod_{y\in S_{u1}} \sgn_1(\lambda,y) = -1, \]
where the second product is over all nodes $u$ at level $n-\ell$.
By definition of $V$, it follows that $\tilde{v}\not\in V$,
a contradiction.

Claim~2 follows: we have $V=X$.
Hence,
\[ \log_2|E| = \dim W = |S| - \dim V = |S| - \dim X = 2^{n-1} - 2^{n-\ell}, \]
and therefore
\[ \log_2 |G| = \log_2 |E| + \log_2 |M_{\ell,n-1}|
= \big( 2^{n-1} - 2^{n-\ell} \big) +  \big( 2^{n-1} - 2^{n-\ell} \big)
= 2^n - 2^{n-\ell + 1} = \log_2 |M_{\ell,n}| \]
by equation~\eqref{eq:Msize}.
Since $G\subseteq M_{\ell,n}$, it follows that
$G=M_{\ell,n}$.
\end{proof}

\begin{thm}
\label{thm:abelian}
Fix a labeling on the tree $T_{\infty}$ and an integer $\ell\geq 2$.
Let $n\geq 0$ be an integer, and
define $\psi_n:M_{\ell,n}\to \{\pm 1 \}^n$ by
$\psi_n(\sigma):= (e_1, \ldots, e_n)$, where for each $i=1,\ldots,n$, we have
\[ e_i:= \begin{cases}
\sgn_i(\sigma,x_0) & \text{ if } 1\leq i\leq \ell -1, \\
+1 & \text{ if } i\geq \ell \text{ and }
\sigma \text{ is an $(\ell,i)$-even cousins map above } x_0, \\
-1 & \text{ if } i\geq \ell \text{ and }
\sigma \text{ is an $(\ell,i)$-odd cousins map above } x_0.
\end{cases} \]
Then $\psi_n$ is a surjective group homomorphism,
and $\ker\psi_n$ is the commutator subgroup of $M_{\ell,n}$.
In particular, the abelianization $M_{\ell,n}^{\textup{ab}}$
is isomorphic to $\{\pm 1 \}^n$.
\end{thm}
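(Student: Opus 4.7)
The plan is to prove the three assertions --- (a) $\psi_n$ is a group homomorphism, (b) $\psi_n$ is surjective, and (c) $\ker\psi_n = [M_{\ell,n}, M_{\ell,n}]$ --- in that order. Parts (a) and (b) are relatively direct; the main work is in (c), which I would settle by induction on $n$ using the standard five-term exact sequence together with a coinvariants computation that is essentially already contained in the proof of Theorem~\ref{thm:generate}.

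For (a), I would verify multiplicativity componentwise. For $1 \le i \le \ell-1$, this follows from the level-$i$ analogue of~\eqref{eq:Parident} applied at $y=x_0$ (which is fixed by every tree automorphism). For $i \ge \ell$, define $\chi_w(\sigma) := \sgn_{\ell-1}(\sigma, y)$, where $y$ is either child of $w$ (the two choices agree for $\sigma\in M_\ell$, as observed in Section~\ref{sec:cousins}), so that $e_i(\sigma) = \prod_w \chi_w(\sigma)$ as $w$ ranges over the $2^{i-\ell}$ nodes at level $i-\ell$. The identity $\sgn_{\ell-1}(\sigma\tau, y) = \sgn_{\ell-1}(\sigma, \tau(y))\cdot\sgn_{\ell-1}(\tau, y)$ gives $\chi_w(\sigma\tau) = \chi_{\tau(w)}(\sigma)\cdot\chi_w(\tau)$, and reindexing via the $\tau$-permutation of level-$(i-\ell)$ nodes yields $e_i(\sigma\tau) = e_i(\sigma)e_i(\tau)$.

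For (b), I would proceed by induction on $n$. Each $e_i$ depends only on the restriction of $\sigma$ to $T_i$, so once the restriction $M_{\ell,n} \to M_{\ell,n-1}$ is known to be surjective (by Definition~\ref{def:Mfinite}), the inductive hypothesis covers the first $n-1$ coordinates. It remains to exhibit some $\sigma_n \in M_{\ell,n}$ acting trivially on $T_{n-1}$ with $e_n(\sigma_n) = -1$: for $n \le \ell-1$, a single sibling swap at level $n$ suffices, while for $n \ge \ell$ one parametrizes such $\sigma_n$ by swap-bits $s_z\in\{0,1\}$ at each level-$(n-1)$ node $z$, subject to the constraint $\sum_{z\in S_y} s_z = 0$ for each $y$ at level $n-\ell$ (where $S_y$ is the set of level-$(n-1)$ nodes above $y$), and chooses $s_z=1$ at exactly one node above each of the two children of one fixed $w^*$ at level $n-\ell$.

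For (c), abelianness of $\{\pm 1\}^n$ forces $\ker\psi_n \supseteq [M_{\ell,n},M_{\ell,n}]$, so part (b) gives $|M_{\ell,n}^{\textup{ab}}| \ge 2^n$. For the reverse inequality I would induct on $n$: the short exact sequence
\[ 1 \to E_n \to M_{\ell,n} \to M_{\ell,n-1} \to 1, \]
with $E_n$ the kernel of restriction, yields an exact sequence
\[ (E_n)_{M_{\ell,n-1}} \longrightarrow M_{\ell,n}^{\textup{ab}} \longrightarrow M_{\ell,n-1}^{\textup{ab}} \longrightarrow 1 \]
(the identification $E_n/[M_{\ell,n},E_n] = (E_n)_{M_{\ell,n-1}}$ uses that $E_n$ is abelian), so it suffices to show $(E_n)_{M_{\ell,n-1}}$ has $\FF_2$-dimension $1$. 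This coinvariants computation is the crux of the argument; fortunately, it is essentially already present in the paper. For $n \le \ell-1$ it follows from transitivity of $\Aut(T_{n-1})$ on its level-$(n-1)$ nodes. For $n \ge \ell$, dualizing the defining sequence $0 \to E_n \to \FF_2^S \to \FF_2^T \to 0$ (with $T$ the level-$(n-\ell)$ nodes) identifies $E_n^* \cong \FF_2^S/X$ in the notation of Theorem~\ref{thm:generate}; $\FF_2$-duality then gives $\dim(E_n)_{M_{\ell,n-1}} = \dim (\FF_2^S/X)^{M_{\ell,n-1}}$, and Claim~1 in that proof shows this invariant subspace equals $\{[0],[\tilde v]\}$, of dimension $1$. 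Combining these yields $|M_{\ell,n}^{\textup{ab}}| \le 2^n$, hence equality, $\ker\psi_n = [M_{\ell,n},M_{\ell,n}]$, and $M_{\ell,n}^{\textup{ab}} \cong \{\pm 1\}^n$ via $\psi_n$.
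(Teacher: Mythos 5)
Your proposal is correct, and its key step takes a genuinely different route from the paper. Parts (a) and (b) are essentially the paper's argument, just carried out more explicitly: the paper verifies homomorphy by a brief parity argument and gets surjectivity by lifting from $M_{\ell,n-1}$ and adjusting by an element of $E_n$ that is odd (resp.\ an odd cousins map), which is exactly your sibling-swap / two-swaps-above-$w^*$ construction. The real divergence is in (c). The paper proves $\ker\psi_n=[M_{\ell,n},M_{\ell,n}]$ constructively: it reduces to $\sigma\in\ker(\psi_n)\cap E_n$, writes $\sigma=(\sigma_0,\sigma_1)$, and in a four-case analysis exhibits explicit commutators (using auxiliary elements $\theta$, $\theta_{n-1}$, the half-swap $\lambda$, and $\mu=[\lambda,(\rho,e)]$) whose product is $\sigma$. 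You instead bound $|M_{\ell,n}^{\textup{ab}}|$ from above by the right-exact sequence $(E_n)_{M_{\ell,n-1}}\to M_{\ell,n}^{\textup{ab}}\to M_{\ell,n-1}^{\textup{ab}}\to 1$ and a coinvariants computation, which you convert by $\FF_2$-duality into the fixed-point computation $(\FF_2^S/X)^{M_{\ell,n-1}}=\{[0],[\tilde v]\}$ already established as Claim~1 in the proof of Theorem~\ref{thm:generate} (your identification $E_n^*\cong\FF_2^S/X$ via the block-sum sequence is equivariant, and the conjugation action on $E_n$ is the permutation action, as in the proof of Proposition~\ref{prop:generate}, so the citation is legitimate; the $n\le\ell-1$ case is the transitive permutation module). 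Combined with surjectivity this pins $|M_{\ell,n}^{\textup{ab}}|=2^n$ and forces $\ker\psi_n=[M_{\ell,n},M_{\ell,n}]$. What each buys: your homological argument is shorter, avoids the delicate commutator bookkeeping, and makes transparent that the whole theorem rests on the same module computation that drives Theorem~\ref{thm:generate}; the paper's argument is self-contained at the group level and has the constructive merit of actually displaying each element of the kernel as an explicit product of commutators, at the cost of the case analysis.
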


\begin{proof}
It is straightforward to see that
the composition of two $(\ell,i$)-even cousins maps,
or of two $(\ell,i$)-odd cousins maps, is an $(\ell,i)$-even cousins map.
Similarly, the composition of one $(\ell,i)$-even and one $(\ell,i$)-odd cousins map
is an $(\ell,i$)-odd cousins map.
It is also well known that the composition of two like-sign permutations is even,
and of two opposite-sign permutations is odd. Thus, $\psi_n$ is a group homomorphism.
In addition, any permutation and its inverse have the same sign,
and similarly for even-cousins and odd-cousins maps.
%and the inverse of an even-cousins map is even-cousins,
%and similarly for odd-cousins maps.
Therefore, for any $\sigma,\tau\in M_{\ell,n}$,
the commutator $[\sigma,\tau]:=\sigma\tau\sigma^{-1}\tau^{-1}$ must lie in $\ker\psi_n$.

For the rest of the proof, we proceed by induction on $n\geq 0$.
The desired result is trivially true for $n=0$.
For each $n\geq 1$, assume the statement is true for $n-1$, and
let $E_n\subseteq M_{\ell,n}$ be the normal subgroup of elements
of $M_{\ell,n}$ acting trivially on the first $n-1$ levels of the tree.
For $n\leq\ell-1$, we have $M_{\ell,n}=\Aut(T_n)$, and hence
half of the elements of $E_n$ are even, and half are odd.
Similarly, for $n\geq \ell$, half of the elements of $E_n$ are $(\ell,n)$-even cousins maps,
and the other half are $(\ell,n)$-odd cousins maps.
We also have
$M_{\ell,n} / E_n \cong M_{\ell,n-1}$ by restricting to the subtree $T_{n-1}$.

To see that $\psi_n$ is surjective, consider an arbitrary $(e_1,\ldots, e_n)\in \{\pm 1\}^n$.
By our inductive assumption of surjectivity, there is
some $\overline{\sigma}\in M_{\ell,n-1}$ such that
\[ \psi_{n-1}(\overline{\sigma})=(e_1,\ldots, e_{n-1}) .\]
Lift $\overline{\sigma}$ to some $\sigma\in M_{\ell,n}$,
so that
\[ \psi_n(\sigma)=(e_1,\ldots,e_{n-1}, \tilde{e}_n) \quad \text{for some} \quad \tilde{e}_n\in\{\pm 1\} .\]
If $\tilde{e}_n=e_n$, we are done. Otherwise, if $n\leq \ell -1$, then
pick $\tau\in E_n$ that is odd; or if $n\geq \ell$,
pick $\tau\in E_n$ that is an odd cousins map.
Then $\psi_n(\tau \sigma) = (e_1,\ldots, e_n)$, as desired.

It remains to show,
given $\sigma\in\ker(\psi_n)$, that $\sigma$ belongs to
the commutator subgroup of $M_{\ell,n}$. Restricting $\sigma$ to $T_{n-1}$
yields $\overline{\sigma} \in M_{\ell,n-1}$, which by our inductive hypothesis is a product
of commutators
%\[ \sigma' = \prod_{i=1}^N \bar{\rho}_i \bar{\tau}_i \bar{\rho}_i^{-1} \bar{\tau}_i^{-1} \]
\[ \overline{\sigma} = \prod_{i=1}^N \big[ \bar{\rho}_i , \bar{\tau}_i \big] \]
with $\bar{\rho}_i , \bar{\tau}_i \in M_{\ell,n-1}$. Lifting $\bar{\rho}_i$
and $\bar{\tau}_i$ to $\rho_i,\tau_i\in M_{\ell,n}$, we have that
%\[ \sigma \bigg[ \prod_{i=1}^N \rho_{i} \tau_{i} \rho_{i}^{-1} \tau_{i}^{-1} \bigg]^{-1} \in \ker(\psi_n) \cap E_n . \]
\[ \bigg( \prod_{i=1}^N \big[ \rho_{i} , \tau_{i} \big] \bigg)^{-1} \sigma \in \ker(\psi_n) \cap E_n . \]
If the expression above is a product of commutators, then so is $\sigma$.
Thus, we may assume without loss that $\sigma\in \ker(\psi_n) \cap E_n$.

Recall that the two children of $x_0$ are the nodes labeled $0$ and $1$.
Write
\[ \sigma=(\sigma_0,\sigma_1)\in E_{n-1}\times E_{n-1} , \]
where $\sigma_i$ describes the action of $\sigma$ on the copy of $T_{n-1}$
rooted at node $i$. In what follows, we will say that $\sigma_i$ is even 
if it is an even permutation of the $2^{n-1}$ nodes at the top level of the tree $T_{n-1}$
that it acts on; otherwise, we will say $\sigma_i$ is odd.
We consider several cases.

First, suppose that
$n\leq \ell-1$ and both $\sigma_0$ and $\sigma_1$ are even.
Then by our inductive hypothesis, both $\sigma_0$ and $\sigma_1$
are products of commutators in $M_{\ell,n-1}$.
For each commutator $[\rho, \tau]$ in the product for $\sigma_0$,
we may define
\[ \tilde{\rho} :=(\rho,e) \quad\text{and}\quad \tilde{\tau} :=(\tau,e) , \]
both of which are elements of $\Aut(T_n)=M_{\ell,n}$.
Thus, $(\sigma_0,e)$ is a product of commutators $[\tilde{\rho}, \tilde{\tau}]$.
By similar reasoning, $(e,\sigma_1)$ is also a product of commutators in $M_{\ell,n}$,
and hence so is $\sigma = (\sigma_0,e)(e,\sigma_1)$.

Second, suppose that $n=\ell$,
in which case $\sigma_0$ and $\sigma_1$ are both necessarily even,
since we assumed that $e_\ell=+1$, and hence that
$\sigma$ is an even-cousins map.
Again by our inductive hypothesis, both $\sigma_0$ and $\sigma_1$
are products of commutators in $M_{\ell,\ell-1}$.
Fix $\theta\in E_{\ell-1}$ that is odd.
For each commutator $[\rho, \tau]$ in the product for $\sigma_0$, define
\[ \tilde{\rho}:=\begin{cases}
(\rho,e) & \text{ if }\rho \text{ is even}, \\
(\rho,\theta) & \text{ if }\rho \text{ is odd}.
\end{cases} \]
Then $\tilde{\rho}\in M_{\ell,\ell}$,
because our definition ensures that $\sgn_{\ell}(\tilde{\rho},x_0)=+1$.
Define $\tilde{\tau}\in M_{\ell,\ell}$ similarly.
Then $(\sigma_0,e)$ is a product of the commutators $[\tilde{\rho} , \tilde{\tau} ]$,
because any appearances of $\theta$ in the second coordinate will cancel within the individual commutators.
Similarly, $(e,\sigma_1)$ is also a product of commutators in $M_{\ell,\ell}$,
and hence so is $\sigma = (\sigma_0,e)(e,\sigma_1)$.

Third, suppose that $n\geq \ell+1$
and that both $\sigma_0$ and $\sigma_1$ are even-cousins maps in $M_{\ell,n-1}$.
Then both $\sigma_0$ and $\sigma_1$ are
products of commutators in $M_{\ell,n-1}$, by our inductive hypothesis.
Recalling the automorphism $\theta\in E_{\ell-1}$ from the previous case,
define $\theta_{n-1}\in M_{\ell,n-1}$ to be the automorphism of $T_{n-1}$ given by
\[ \theta_{n-1}(yw):=\theta(y) w \]
for each node $y$ at level $\ell-1$ and each word $w\in \{0,1\}^{n-\ell}$.
(That is, $\theta_{n-1}$ acts like $\theta$ at level $\ell-1$ but makes no
further permutations of the labels above that level.)
For each commutator $[\rho, \tau]$ in the product for $\sigma_0$, define
\[ \tilde{\rho}:=\begin{cases}
(\rho,e) & \text{ if } \sgn_{\ell-1}(\rho,0)=+1, \\
(\rho,\theta_{n-1}) & \text{ if }\sgn_{\ell-1}(\rho,0)=-1.
\end{cases} \]
As in the previous case, our definition ensures that
$\sgn_{\ell}(\tilde{\rho},x_0)=+1$, so that $\tilde{\rho}\in M_{\ell,n}$.
Define $\tilde{\tau}\in M_{\ell,n}$ similarly.
As in the previous two cases, using the commutators
$[\tilde{\rho} , \tilde{\tau} ]$,
it follows that  $\sigma = (\sigma_0,e)(e,\sigma_1)$ is a product of commutators in $M_{\ell,n}$.

Finally, suppose either that 
$n\leq \ell-1$, and both $\sigma_0$ and $\sigma_1$ are odd;
or that $n\geq \ell+1$,
and both $\sigma_0$ and $\sigma_1$ are odd-cousins maps.
Let $\lambda\in M_{\ell,n}$ be the automorphism given by
$\lambda(0w)=1w$ and $\lambda(1w)=0w$, for every word $w\in \{0,1\}^{n-1}$.
(That is, $\lambda$ simply exchanges the two halves of the tree.)
It is immediate from the definition of the group that $\lambda\in M_{\ell,n}$,
since $\sgn_{\ell}(\lambda,y)=+1$ for every node $y$ of the tree.

If $n\leq \ell-1$, choose $\rho\in E_{n-1}$ that is odd;
of if $n\geq \ell+1$, choose $\rho\in E_{n-1}$ that is an odd-cousins map.
Define $\mu$ to be the commutator
\[ \mu:=[\lambda, (\rho,e)]=(\rho^{-1},\rho)\in E_n, \]
which is odd (respectively, an odd-cousins map) on each half of the
$n$-th level of the tree, if $n\leq\ell-1$ (respectively, if $n\geq \ell+1$).
Thus,
\[ \sigma\mu^{-1} = (\sigma_0\rho, \sigma_1 \rho^{-1}) \in E_n \]
is even (respectively, an even-cousins map) on each half of the tree.
By the first  (respectively, third) case above,
$\sigma\mu^{-1}$ is a product of commutators.
Therefore, $\sigma$ is also such a product, because $\mu$ is a commutator.
\end{proof}

\section{Attaining the full group}
\label{sec:surj}

Before proving Theorem~\ref{thm:main2},
we must define the quantities $\kappa_1,\kappa_2,\ldots$
referred to in the statement of that result.
To do so, we recall that the \emph{cross ratio} of four points $a,b,c,d\in\PKbar$ is
\[ \CR(a,b,c,d) := \frac{(a-b)(c-d)}{(a-c)(b-d)}, \]
with the usual understanding of what this expression means if any one of the four points
is $\infty$, i.e. the two terms containing $\infty$ cancel.
(For example, if $c=\infty$, the cross ratio above is $-(a-b)/(b-d)$.)

We also set the following notation throughout this section.
As in Theorems~\ref{thm:main1} and~\ref{thm:main2},
assume that the two critical points $\xi_1,\xi_2\in\PKbar$
of the quadratic rational function $f\in K(z)$
collide at the $\ell$-th iterate, for some $\ell\geq 2$.
Observe that $\xi_1$ and $\xi_2$ cannot both be periodic;
otherwise, if $n\geq\ell$ is a multiple of both periods, we would have
\[ \xi_1 =f^n(\xi_1)=f^n(\xi_2)=\xi_2, \]
a contradiction.
Thus, it cannot be that each of $\xi_1$ and $\xi_2$ is in the forward orbit
of the other. Without loss, then, we may assume that
$\xi_2$ is not in the forward orbit of $\xi_1$.

%Exchanging the subscripts if necessary, assume that with $\xi_1\neq \infty$.
%Assume further that $\xi_1$ and $\xi_2$ are not preperiodic.
Set the following notation:
\begin{tabbing}
\hspace{8mm} \= \hspace{21mm} \=  \kill
\> $\delta\in K^{\times}$: \> the discriminant of the minimal polynomial of $\xi_1$ over $K$, \\
\> \> or $\delta=1$ if $\xi_1\in\PK$ \\
\> $L$: \> the field $L:= K(\sqrt{\delta})$ \\
\> $F=(P,Q)$: \> a homogeneous lift of $f$, with $P,Q\in K[X,Y]$ \\
\> $(s_0,t_0)$: \> a lift of the point $x_0\in\PK$ to $K\times K \smallsetminus \{(0,0)\}$ \\
\> $(\eta_i,\theta_i)$: \> for each $i=1,2$, a lift of the critical point $\xi_i$ to
$L \times L \smallsetminus \{(0,0)\}$ \\
\> $(P_n,Q_n)$: \> for each integer $n\geq 1$, write $(P_n,Q_n)=F^n$ \\
\> $H_n$: \> the polynomial $H_n:=t_0 P_n - s_0 Q_n$, as in Corollary~\ref{cor:iterdisc}
\end{tabbing}
In particular, $P,Q\in K[X,Y]$ are relatively prime homogeneous polynomials of degree $2$
such that $f(z) = P(z,1)/Q(z,1)$.
In addition, we have $x_0=s_0/t_0$, with the usual understanding of $1/0$ as the point $\infty$.

\begin{defin}
\label{def:kappa}
With notation as above, define $\kappa_n\in L=K(\sqrt{\delta})$ by
\[ \kappa_n:= \begin{cases}
\Delta(H_n) & \text{ if } 1\leq n \leq \ell -1, \\[1mm]
\dsps \CR\big( x_0, f^{n-\ell + 1}(\xi_1), f^n(\xi_2), f(\xi_2)\big)
& \text{ if } n\geq \ell+1,
\end{cases} \]
and
\[ \kappa_\ell := \begin{cases}
\Delta(H_\ell) & \text{ if } \sqrt{\delta}\not\in K, \\[1mm]
%\CR\big( f(\infty), f^{\ell-1}(\infty), f^{\ell-1}(0), \infty \big)
\dfrac{f(\xi_2) - f^{\ell-1}(\xi_2)}{f(\xi_2) - f^{\ell-1}(\xi_1)}
\cdot \CR\big( x_0, f(\xi_1), f^{\ell}(\xi_2), f(\xi_2) \big)
& \text{ if } \sqrt{\delta}\in K \text{ and }\ell\geq 3, \\[1mm]
\Delta(\theta_2 P - \eta_2 Q) \cdot \CR\big( x_0, f(\xi_1), f^{2}(\xi_2), f(\xi_2) \big)
& \text{ if } \sqrt{\delta}\in K \text{ and }\ell=2.
\end{cases}
\]
If the above formulas would result in $\kappa_n=\infty$,
then re-define $\kappa_n:=0$.
%\[ \kappa_n:= \begin{cases}
%\Delta(H_n) & \text{ if } 1\leq n \leq \ell -1, \\[1mm]
%\dsps \CR\big( x_0, f^{n-\ell + 1}(\xi_1), f^n(\xi_2), f(\xi_2)\big) & \text{ if } n\geq \ell,
%\end{cases} \quad \text{ if } \sqrt{\delta}\in K, \]
%or
%\[ \kappa_n:= \begin{cases}
%\Delta(H_n) & \text{ if } 1\leq n \leq \ell, \\[1mm]
%\dsps \CR\big( x_0, f^{n-\ell + 1}(\xi_1), f^n(\xi_2), f(\xi_2)\big) & \text{ if } n\geq \ell+1,
%\end{cases} \quad \text{ if } \sqrt{\delta}\not\in K, \]
\end{defin}

The final case of Definition~\ref{def:kappa},
that the original formula would give $\kappa_n=\infty$,
can only arise from one of the cross ratio terms, and then only if
$f^n(\xi_2)=x_0$.

The special case $n=\ell$ in Definition~\ref{def:kappa}
corresponds, not coincidentally,
to the lowest level of the tree at which the groups
$M_{\ell}$ and $\widetilde{M}_{\ell}$ differ.
Indeed, as we saw in the proof of Theorem~\ref{thm:main1},
the discriminant $\Delta(H_\ell)$ is a square in $K$ if and only if $\delta$
is also a square in $K$.

Note also that if we replace the lifts $F$ and $(s_0,t_0)$
of $f$ and $x_0$ by other lifts
$\tilde{F}$ and $(\tilde{s}_0,\tilde{t}_0)$,
the effect is to multiply each $H_n$ by some $c \in K^{\times}$.
The discriminant $\Delta(H_n)$ is then multiplied by an even power of $c$,
and hence by a square in $K^{\times}$.
Similar reasoning applies to $\Delta(\theta_2 P - \eta_2 Q)$
in the case that $\sqrt{\delta}\in K$.
Thus, although a different choice of lift may change the exact values
of $\kappa_1,\ldots,\kappa_{\ell}$, it does not change whether
any of the products $\kappa_{i_1}\cdots \kappa_{i_m}$ of Theorem~\ref{thm:main2}
are squares.
This condition is also not affected by $K$-rational coordinate changes,
as the next result shows.

\begin{prop}
\label{prop:coordchange}
%Let $f\in K(z)$ be a rational function of degree~2
%with critical points $\xi_1,\xi_2\in\PKbar$
%that collide at the $\ell$-th iterate, for some $\ell\geq 2$.
%Fix a point $x_0\in\PK$, and
%let $\kappa_1,\kappa_2,\ldots\in$ be the associated quantities. Let
With notation as above, let
$\nu\in\PGL(2,K)$, let $g:=\nu\circ f \circ \nu^{-1}$, and let $y_0:=\nu(x_0)$.
Let $\tilde{\kappa}_1,\tilde{\kappa}_2,\ldots\in L= K(\sqrt{\delta})$ be the associated quantities
for the preimages of $y_0$ under iterates of $g$.
Then for each $n\geq 1$, there exists $c_n\in K^{\times}$
such that $\tilde{\kappa}_n = c_n^2 \kappa_n$.
\end{prop}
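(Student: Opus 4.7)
The plan is to lift the Möbius transformation $\nu$ to a matrix $N = \begin{pmatrix}\alpha & \beta \\ \gamma & \epsilon\end{pmatrix}\in\mathrm{GL}_2(K)$ and then transport every piece of data for $f$ through $N$: take $G := N\circ F\circ N^{-1}$ as the homogeneous lift of $g$, $(\tilde{s}_0,\tilde{t}_0)^T := N(s_0,t_0)^T$ as a lift of $y_0$, and $(\tilde\eta_i, \tilde\theta_i)^T := N(\eta_i,\theta_i)^T$ as lifts of the critical points $\nu(\xi_i)$ of $g$. As observed in the paragraph preceding the proposition, altering any of these lifts changes each $\tilde\kappa_n$ only by a square in $K^\times$, so these canonical choices lose no generality. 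It is also routine to check that $\sqrt{\tilde\delta}\in K$ if and only if $\sqrt\delta\in K$, so the case-split in Definition~\ref{def:kappa} matches up on the two sides.

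The central algebraic observation is the identity $\tilde H_n = (\det N)\,H_n\circ N^{-1}$. To prove it, I would view $H_n$ as the row-vector product $(t_0, -s_0)\,F^n$ and use the elementary calculation $(\tilde t_0, -\tilde s_0)\,N = (\det N)(t_0, -s_0)$, which follows by directly expanding $N(s_0,t_0)^T$. The same argument with $(\theta_2, -\eta_2)$ in place of $(t_0, -s_0)$ yields $\tilde\theta_2\tilde P - \tilde\eta_2\tilde Q = (\det N)(\theta_2 P - \eta_2 Q)\circ N^{-1}$. Standard manipulations of factored homogeneous forms (using the identity $\tilde{a}_i\tilde{b}_j-\tilde{b}_i\tilde{a}_j=(\det N)(a_ib_j-b_ia_j)$ on each pair of roots) give $\Delta(H\circ N^{-1}) = (\det N)^{d(d-1)}\Delta(H)$ and $\Delta(cH) = c^{2(d-1)}\Delta(H)$ when $d=\deg H$; combining these produces
\[
\Delta(\tilde H_n) = (\det N)^{(2^n-1)(2^n+2)}\,\Delta(H_n), \qquad \Delta(\tilde\theta_2\tilde P - \tilde\eta_2\tilde Q) = (\det N)^{4}\,\Delta(\theta_2 P - \eta_2 Q).
\]
Both exponents are even, since $2^n+2$ is even for every $n\geq 1$. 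This resolves every instance of $\kappa_n$ whose formula is a pure discriminant: all $n$ with $1\leq n\leq \ell-1$, and $n=\ell$ in both the $\sqrt\delta\notin K$ case and the $(\sqrt\delta\in K,\,\ell=2)$ case.

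For the cross-ratio factors appearing in $\kappa_n$ when $n\geq\ell+1$ and in $\kappa_\ell$ when $\sqrt\delta\in K$, I would invoke the classical $\PGL_2$-invariance of the cross ratio, $\CR(\nu a,\nu b,\nu c,\nu d) = \CR(a,b,c,d)$. Combined with $y_0 = \nu(x_0)$ and $g^k(\tilde\xi_i) = \nu(f^k(\xi_i))$, this shows each such cross-ratio factor is literally preserved, contributing $c_n = 1$ from that portion. Together with the discriminant calculation, this settles every case except one.

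The main obstacle is the remaining subcase $n=\ell$ with $\sqrt\delta\in K$ and $\ell\geq 3$, where $\kappa_\ell$ contains the non-cross-ratio factor $(f(\xi_2) - f^{\ell-1}(\xi_2))/(f(\xi_2) - f^{\ell-1}(\xi_1))$. Applying the identity $\nu(a)-\nu(b) = (\det N)(a-b)/[(\gamma a + \epsilon)(\gamma b + \epsilon)]$ to every difference in the formula for $\kappa_\ell$, the $(\det N)$ factors and most of the $(\gamma a + \epsilon)$ factors cancel, leaving a residual correction depending only on the two distinct roots $f^{\ell-1}(\xi_1)$ and $f^{\ell-1}(\xi_2)$ of a single $K$-rational quadratic form (namely the $f$-preimage of the collision point $f^\ell(\xi_1) = f^\ell(\xi_2)$). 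I expect the step of showing that this residual is a square in $K^\times$ to be the most delicate part, and would carry it out by combining a Vieta-type identity for the product $(\gamma f^{\ell-1}(\xi_1)+\epsilon)(\gamma f^{\ell-1}(\xi_2)+\epsilon)$ with the square-class freedom afforded by different choices of lifts for $G$ and $(\tilde s_0,\tilde t_0)$.
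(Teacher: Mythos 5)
Most of your proposal coincides with the paper's own argument: the identity $\tilde H_n=(\det N)\,H_n\circ N^{-1}$ (which the paper derives by the same direct expansion), the observation that the discriminants then change only by even powers of $\det N$ (the paper obtains the exponent by applying Theorem~\ref{thm:compdisc} to $H_n\circ N^{-1}$ rather than by your root-by-root manipulation, but the content is the same, and your sign slip in the exponent of $\det N$ is immaterial since all exponents involved are even), and $\PGL(2)$-invariance of the cross ratio for the remaining factors.

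The genuine gap is exactly the subcase you flag and leave unfinished: $n=\ell$ with $\sqrt{\delta}\in K$ and $\ell\geq 3$. Your sketched plan cannot be completed. Writing $\nu(z)=(\alpha z+\beta)/(\gamma z+\epsilon)$ and $u_i:=f^{\ell-1}(\xi_i)$, the exact relation is $\tilde\kappa_\ell=\kappa_\ell\cdot\dfrac{\gamma u_1+\epsilon}{\gamma u_2+\epsilon}$: the cross-ratio factor is literally unchanged, and no homogeneous lifts enter this formula at all, so there is no square-class freedom available to absorb the residual. And that residual is not a square in general: since $u_1,u_2\in K$, its square class equals that of the product $(\gamma u_1+\epsilon)(\gamma u_2+\epsilon)$, which is (up to squares) the value at the pole of $\nu$ of the $K$-rational quadratic vanishing at $u_1,u_2$, of arbitrary square class; concretely, in the normalized model one has $u_2=-u_1$, and $\nu(z)=1/z$ makes the residual equal to $-1$, a nonsquare over $\QQ$. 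So no Vieta-type identity rescues the step. The factor $\bigl(f(\xi_2)-f^{\ell-1}(\xi_2)\bigr)/\bigl(f(\xi_2)-f^{\ell-1}(\xi_1)\bigr)$ has to be read as the four-point cross ratio $\CR\bigl(f(\xi_2),f^{\ell-1}(\xi_2),f^{\ell-1}(\xi_1),\xi_2\bigr)$ --- which agrees with the displayed quotient exactly when $\xi_2=\infty$, the normalization in which $\kappa_\ell$ is actually used --- and is then genuinely $\PGL(2)$-invariant; that is the reading under which the paper's blanket appeal to cross-ratio invariance disposes of it, leaving only the discriminant factors to check. Your attempt to prove square-equivalence of the quotient as written, with one point effectively anchored at $\infty$, is the one step of the proof that fails.
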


\begin{proof}
Observe that the $K$-rational coordinate change $\nu$ does not change the discriminant
$\delta$ of the critical points, except possibly by a factor of a square in $K$.
In particular, the field $K(\sqrt{\delta})$ is the same for $g$ as for $f$,
and the choice of which of the three formulas used to define $\kappa_{\ell}$
also does not change.

In addition, cross ratios are well known to be unaffected by coordinate change,
as is easy to check by hand.
Hence, it suffices to show that the discriminants $\Delta(H_n)$
and (when $\sqrt{\delta}\in K$) $\Delta(\theta_2 P - \eta_2 Q)$ are affected
only by square factors in $K$ under coordinate changes.
We will prove this fact for $\Delta(H_n)$; the proof for 
$\Delta(\theta_2 P - \eta_2 Q)$ is the same with $n=1$.

Lift $f$ to $F=(P,Q)$ and $x_0$ to $(s_0,t_0)$
as in the notation presented just before Definition~\ref{def:kappa}.
Lift $\nu$ to $N=(R,S)$, where
\[ R=aX+bY \;\;\text{and}\;\; S=cX+dY,
\quad\text{with}\quad a,b,c,d\in K \;\;\text{and}\;\; \eps:=ad-bc\in K^{\times} .\]
Then $G:=N\circ F \circ N^{-1}$ is a lift of $g$, and
$(u_0,v_0):=N(s_0,t_0)$ is a lift of $y_0$.

For any $n\geq 1$, write $F^n=(P_n,Q_n)$ and $G^n=(\tilde{P}_n,\tilde{Q}_n)$,
so that
\[ \tilde{P}_n = aP_n \circ N^{-1} + bQ_n \circ N^{-1}
\quad\text{and}\quad
\tilde{Q}_n =cP_n \circ N^{-1} + dQ_n \circ N^{-1} ,\]
Therefore, writing $H_n:= t_0 P_n - s_0 Q_n$ and
$\tilde{H}_n:= v_0 \tilde{P}_n - u_0 \tilde{Q}_n$, we have
\begin{align*}
\tilde{H}_n &= (c s_0 + d t_0)(a P_n\circ N^{-1} + b Q_n\circ N^{-1})
- (a s_0 + b t_0)(c P_n\circ N^{-1} + d Q_n\circ N^{-1}) \\
&= ad t_0 P_n\circ N^{-1} + bc s_0 Q_n\circ N^{-1}
- bc t_0 P_n\circ N^{-1} - ad s_0 Q_n\circ N^{-1} \\
& = (ad-bc) H_n\circ N^{-1} = \eps H_n\circ N^{-1}.
\end{align*}
Write $N^{-1}=(R',S')=(a'X + b'Y, c'X+d'Y)$.
Then a simple computation shows
\[ \Res(R',S') = a'd'-b' c' = (ad-bc)^{-1} = \Res(R,S)^{-1} = \eps^{-1}.\]
Hence, applying Theorem~\ref{thm:compdisc} with $J=H_n$
to the composition $H_n \circ N^{-1}$, and writing $m:=\deg(H_n)=2^n$,
we have
\[ \Delta(H_n \circ N^{-1})
= (-1)^0 \Delta(H_n)^1 (\eps^{-1})^{m(m-2)} \Res(H_n, \eps^{-1})
= \eps^{-m(m-1)} \Delta(H_n) ,\]
where the second equality is because $\eps^{-1}$ is a constant, and hence
\[ \Res(H_n, \eps^{-1}) = ( \eps^{-1} )^{\deg(H_n)} = \eps^{-m}. \]
Combining the above computations, we have
\[ \Delta(\tilde{H}_n) = \Delta(\eps H_n \circ N^{-1})
= \eps^{2m-2} \Delta(H_n \circ N^{-1})
= \eps^{-(m-2)(m-1)}\Delta(H_n), \]
and therefore the result follows by choosing $c_n:= \eps^{-(m-2)(m-1)/2}\in K^{\times}$.
\end{proof}

Before proving Theorem~\ref{thm:main2}, we also need
two important technical results.

\begin{lemma}
\label{lem:Qn}
Let $\ell\geq 2$, and suppose that
\begin{equation}
\label{eq:ABCform}
f(z) = \frac{Az^2 + B}{z^2 +C}
\quad \text{for some }
A,B,C\in K \text{ with } AC-B\neq 0 .
\end{equation}
Suppose further that the two critical points $0,\infty$ of $f$
collide at the $\ell$-th iterate,
and that $\infty$ is not in the forward orbit of $0$.
%and that they are not preperiodic.
%Let $x\in\PKbar$ not in the forward orbit of $f(0)$ or $f(\infty)$,
Let $x\in\PKbar$ not be in the forward orbit of $f(\infty)$,
and write $f^{-1}(x)=\{\pm y\}$ for some $y\in \Kbar$.
Define $m:= 2^{\ell -2}$, and write 
\[ f^{-(\ell-1)}(y) = \{ \pm \alpha_1, \ldots, \pm \alpha_m \}
\quad\text{and}\quad
f^{-(\ell-1)}(-y) = \{ \pm \alpha'_1, \ldots, \pm \alpha'_m \} , \]
for some $\alpha_i, \alpha'_i\in\Kbar$.
Then, possibly after reversing the roles of $\alpha_m$ and $-\alpha_m$, we have
\[ \bigg( \prod_{i=1}^m \alpha_i + \prod_{i=1}^m \alpha'_i \bigg)^2
= 4 q_{\ell-1} \cdot \CR\big( x, f(0), f^{\ell}(\infty), f(\infty) \big), \]
where
\[ q_{\ell-1} := (-C)^{2^{\ell-2}} \prod_{i=2}^{\ell -1}
\bigg( \frac{f(\infty) - f^i(\infty)}{ f(\infty) - f^i(0)} \bigg)^{2^{\ell-i-1}} \in K^{\times} .\]
\end{lemma}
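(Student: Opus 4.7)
The plan is to exploit the factorization $f(z) = \tilde{f}(z^2)$, where $\tilde{f}(w) := (Aw+B)/(w+C)$ is a M\"obius transformation with inverse $\tilde{f}^{-1}(w) = (Cw - B)/(A - w)$. For any $\alpha \in f^{-n}(y)$, this gives $\alpha^2 = \tilde{f}^{-1}(f(\alpha))$. Writing $S := \prod_{i=1}^m \alpha_i$ and $S' := \prod_{i=1}^m \alpha'_i$, the aim is to show $(S + S')^2 = 4 q_{\ell-1} \cdot \CR(x, f(0), f^\ell(\infty), f(\infty))$. Squaring and collecting, one finds $S^2 = \prod_\beta \tilde{f}^{-1}(\beta) = (-C)^{2^{\ell-2}} \prod_\beta (\beta - f(0))/(\beta - f(\infty))$, with $\beta$ ranging over $f^{-(\ell-2)}(y)$, and similarly for $(S')^2$ over $f^{-(\ell-2)}(-y)$.

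The next step is to convert these products into values of polynomial iterates. Lift $f^n = u_n/v_n$ with $u_n, v_n \in K[z]$ coprime of degree $2^n$; since the leading coefficient of $u_n$ equals $f^n(\infty)$ times that of $v_n$, the polynomial $u_{\ell-2}(z) - y \cdot v_{\ell-2}(z)$ has roots $f^{-(\ell-2)}(y)$ and leading coefficient $C_{\ell-2}(f^{\ell-2}(\infty) - y)$, where $C_{\ell-2}$ is the leading coefficient of $v_{\ell-2}$. Taking the ratio at $z = f(0)$ and $z = f(\infty)$ cancels this leading coefficient, and substituting $u_n = f^n \cdot v_n$ yields
\[
S^2 = \kappa \cdot \frac{f^{\ell-1}(0) - y}{f^{\ell-1}(\infty) - y},
\qquad
(S')^2 = \kappa \cdot \frac{f^{\ell-1}(0) + y}{f^{\ell-1}(\infty) + y},
\]
where $\kappa := (-C)^{2^{\ell-2}} v_{\ell-2}(f(0))/v_{\ell-2}(f(\infty))$. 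From the recursion $v_n(z) = v_{n-1}(z)^2 \cdot (AC - B)/(f(\infty) - f^n(z))$, which follows from $f = (Az^2+B)/(z^2+C)$ by direct computation, iteration from $v_0 = 1$ together with a reindexing shows that $\kappa$ coincides with the constant $q_{\ell-1}$ defined in the statement.

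The final ingredient is the collision hypothesis itself: since $f(u) = f(v)$ if and only if $u = \pm v$ for this $f$, collision at iterate $\ell$ but not $\ell - 1$ forces $f^{\ell-1}(0) = -f^{\ell-1}(\infty) =: -w$. Substituting into the two displayed formulas gives $S^2 + (S')^2 = -2 q_{\ell-1}(w^2+y^2)/(w^2-y^2)$ and $(SS')^2 = q_{\ell-1}^2$; swapping $\alpha_m$ with $-\alpha_m$ if necessary, one can arrange $SS' = q_{\ell-1}$, so that $(S + S')^2 = -4 q_{\ell-1} y^2 / (w^2 - y^2)$. Finally, $y^2 = \tilde{f}^{-1}(x)$ and $w^2 = \tilde{f}^{-1}(f^\ell(\infty))$, and the M\"obius identity $\tilde{f}^{-1}(a) - \tilde{f}^{-1}(b) = (AC-B)(a-b)/((A-a)(A-b))$ rewrites $-y^2/(w^2 - y^2)$ as $\CR(x, f(0), f^\ell(\infty), f(\infty))$, yielding the claim. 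The main obstacle will be careful sign bookkeeping --- particularly in the base case $\ell = 2$, where $(-C)^{2^{\ell-2}} = -C$ contributes a negative sign --- and justifying the interchange of $\alpha_m$ and $-\alpha_m$ that fixes the sign of $SS'$.
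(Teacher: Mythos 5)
Your proposal is correct, and its endgame coincides with the paper's: both reduce everything to $S^2$, $S'^2$, and $SS'$ (with $S=\prod\alpha_i$, $S'=\prod\alpha'_i$), use the collision hypothesis to force $f^{\ell-1}(0)=-f^{\ell-1}(\infty)$, fix the sign $SS'=q_{\ell-1}$ by the permitted swap of $\alpha_m$ and $-\alpha_m$, and convert $-4q_{\ell-1}y^2/(w^2-y^2)$ into the stated cross ratio. Where you genuinely differ is in how the central product formula $S^2=q_{\ell-1}\,\bigl(f^{\ell-1}(0)-y\bigr)/\bigl(f^{\ell-1}(\infty)-y\bigr)$ is obtained. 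The paper proves, by induction on the level $n$, the more general identity $\prod\beta_i^2=q_n\,(w-f^n(0))/(w-f^n(\infty))$ for $f^{-n}(w)=\{\pm\beta_i\}$, using at each step the observation that $(a^2-b^2)/(a^2-c^2)=\CR\bigl(f(a),f(b),f(c),f(\infty)\bigr)$, and then applies it at $n=\ell-1$ (for $S^2,S'^2$) and $n=\ell$ (for $(SS')^2$). You instead evaluate the product in one shot: $S^2=\prod_\beta\tilde{f}^{-1}(\beta)$ over $\beta\in f^{-(\ell-2)}(y)$, written as a ratio of values of the polynomial $u_{\ell-2}-y\,v_{\ell-2}$ at $f(0)$ and $f(\infty)$, with the constant pinned down by the closed form coming from your recursion $v_n=v_{n-1}^2(AC-B)/(f(\infty)-f^n)$; I checked that this gives $v_n=(AC-B)^{2^n-1}\prod_{j=1}^n\bigl(f(\infty)-f^j(z)\bigr)^{-2^{n-j}}$, so the ratio $v_{\ell-2}(f(0))/v_{\ell-2}(f(\infty))$ telescopes, after the shift $i=j+1$, to exactly the product defining $q_{\ell-1}$, and your Möbius-difference identity does recover the cross ratio at the end. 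Your route buys an explicit closed form for the constant and avoids carrying cross ratios through an induction, at the cost of having to record the non-degeneracies that make the polynomial evaluation legitimate — $v_{\ell-2}(f(0))\neq 0$ (i.e.\ $\infty\notin\Orb_f^+(0)$), $v_{\ell-2}(f(\infty))\neq 0$ ($\infty$ is not periodic), and $\pm y\neq f^{\ell-2}(\infty)$ (else $x\in\Orb_f^+(f(\infty))$), which guarantee that $u_{\ell-2}-y\,v_{\ell-2}$ has exact degree $2^{\ell-2}$ and that $S,S'\neq 0$ — all of which follow from the hypotheses, just as the analogous checks open the paper's proof. The paper's inductive identity, by contrast, is uniform in the level $n$ and the base point $w$, and is the same template it re-runs for Lemma~\ref{lem:Rn}.
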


\begin{proof}
Note that for any points $s,t\in\PKbar$ with $f(s)=t$, we do indeed have
$f^{-1}(t)=\{\pm s\}$. In particular, the notation
$f^{-1}(x)=\{\pm y\}$ in the statement of the lemma makes sense,
as does the notation for $f^{-(\ell-1)}(\pm y)$;
see Figure~\ref{fig:QnLemma}.

\begin{figure}
\begin{tikzpicture}
\path[draw, dashed] (0.2,2.6) -- (3.2,0.8) -- (6.2,2.6);
\path[draw, dashed] (8.2,2.6) -- (11.2,0.8) -- (14.2,2.6);
\path[draw] (3.2,0.8) -- (7.2,0.2) -- (11.2,0.8);
\path[fill] (3.2,0.8) circle (0.1);
\path[fill] (11.2,0.8) circle (0.1);
\path[fill] (7.2,0.2) circle (0.1);
\node (x0) at (6.9,0) {\small $x$};
\node (a0) at (2.9,0.6) {$y$};
\node (b0) at (11.6,0.6) {$-y$};
\path[fill] (0.2,2.6) circle (0.1);
\path[fill] (0.8,2.6) circle (0.1);
\path[fill] (1.8,2.6) circle (0.1);
\path[fill] (2.4,2.6) circle (0.1);
\path[fill] (5.6,2.6) circle (0.1);
\path[fill] (6.2,2.6) circle (0.1);
\node (mal1) at (0.1,2.9) {\footnotesize $\alpha_1$};
\node (al1) at (0.8,2.9) {\footnotesize $-\alpha_1$};
\node (mal2) at (1.7,2.9) {\footnotesize $\alpha_2$};
\node (al2) at (2.45,2.9) {\footnotesize $-\alpha_2$};
\node (dots1) at (4.1,2.6) {\footnotesize $\cdots$};
\node (malm) at (5.45,2.9) {\footnotesize $\alpha_m$};
\node (alm) at (6.25,2.9) {\footnotesize $-\alpha_m$};
\path[fill] (8.2,2.6) circle (0.1);
\path[fill] (8.8,2.6) circle (0.1);
\path[fill] (9.8,2.6) circle (0.1);
\path[fill] (10.4,2.6) circle (0.1);
\path[fill] (13.6,2.6) circle (0.1);
\path[fill] (14.2,2.6) circle (0.1);
\node (pmal1) at (8.1,2.92) {\footnotesize $\alpha'_1$};
\node (pal1) at (8.8,2.92) {\footnotesize $-\alpha'_1$};
\node (pmal2) at (9.7,2.92) {\footnotesize $\alpha'_2$};
\node (pal2) at (10.4,2.92) {\footnotesize $-\alpha'_2$};
\node (dots2) at (12.1,2.6) {\footnotesize $\cdots$};
\node (pmalm) at (13.45,2.92) {\footnotesize $\alpha'_m$};
\node (palm) at (14.25,2.92) {\footnotesize $-\alpha'_m$};
\end{tikzpicture}
\caption{Lemma~\ref{lem:Qn}}
\label{fig:QnLemma}
\end{figure}

Observe that $\infty$ cannot be periodic; if it were, then 
choosing $n\geq \ell$ to be a multiple of the period,
we would have $f^n(0)=f^n(\infty)=\infty$, contradicting the hypotheses.
In addition, we have $f^n(0)\neq f(\infty)$ for all $n\geq 0$, because
$\infty$ itself is the only immediate preimage of $f(\infty)$.
Similarly, we also have $f^n(\infty)\neq f(\infty)$ for all $n\geq 2$.

Generalize the formula for $q_{\ell-1}$ by defining
\[ q_n := (-C)^{2^{n-1}} \prod_{i=2}^{n}
\bigg( \frac{f(\infty) - f^i(\infty)}{ f(\infty) - f^i(0)} \bigg)^{2^{n-i}} \]
for each $n\geq 1$. By the previous paragraph, the numerators and
denominators in this product all lie in $K^{\times}$.
Similarly, we have $C\in K^{\times}$; for if $C=0$, then we would have $f(0)=\infty$.
Thus, we have $q_n\in K^{\times}$.
%Equivalently, that is, we have
%$q_1:=-C$, and for each $n=2,\ldots, \ell-1$, 
%\[ q_n := q_{n-1}^2 \bigg( \frac{f^n(\infty) - f(\infty)}{f^n(0) - f(\infty)}\bigg) .\]

Observe that for any $a,b,c\in\Kbar$, we have
\begin{equation}
\label{eq:CRrewrite}
\frac{a^2-b^2}{a^2-c^2} = \CR(a^2,b^2,c^2,\infty)
= \CR\big( f(a), f(b), f(c), f(\infty) \big),
\end{equation}
since $f(z)$ is a linear fractional transformation applied to $z^2$,
and because, as noted in the proof of Proposition~\ref{prop:coordchange}, cross ratios
are unchanged under linear fractional transformations.

We claim that for each $n\geq 1$ and for any point $w\in\PKbar$
not in the forward orbit of $f(\infty)$, we have
\begin{equation}
\label{eq:lemmaclaim}
\prod_{i=1}^{2^{n-1}} \beta_i^2 = q_n \cdot \bigg( \frac{w-f^n(0)}{w- f^n(\infty)}\bigg),
\quad\text{where} \quad
f^{-n}(w) = \{\pm \beta_1,\ldots, \pm \beta_{2^{n-1}} \} .
\end{equation}
(The assumption that $w$ is not in the forward orbit of $f(\infty)$
guarantees that both sides of equation~\eqref{eq:lemmaclaim}
are defined and finite.)
To prove this identity for $n=1$, write $f^{-1}(w)=\{\pm\beta_1\}$.
Solving the equation $f(\beta_1)=w$ for $\beta_1^2$ gives
\[ \beta_1^2 = \frac{Cw-B}{A-w} = -C \bigg( \frac{ w - B/C}{w- A} \bigg)
= -C \bigg( \frac{ w - f(0)}{w- f(\infty)} \bigg), \]
verifying the claim for $n=1$, since $q_1=-C$.
Proceeding inductively, for any $n\geq 2$, suppose the claim is true for $n-1$.
Write $f^{-1}(w)=\{\pm u\}$ for some $u\in\Kbar$, with
\[ f^{-(n-1)}(u)=\{\pm \beta_1,\ldots, \pm \beta_{2^{n-2}}\}
\quad\text{and}\quad
f^{-(n-1)}(-u)=\{\pm \beta_{1+2^{n-2}},\ldots, \pm \beta_{2^{n-1}}\} .\]
Then by our inductive hypothesis, we have
\begin{align*}
\prod_{i=1}^{2^{n-1}} \beta_i^2 &=
\bigg[ \prod_{i=1}^{2^{n-2}} \beta_i^2 \bigg]
\cdot \bigg[\prod_{i=1+2^{n-2}}^{2^{n-1}} \beta_i^2 \bigg] =
\bigg[ q_{n-1} \cdot \bigg( \frac{u-f^{n-1}(0)}{u - f^{n-1}(\infty)} \bigg) \bigg]
\bigg[ q_{n-1} \cdot \bigg( \frac{-u-f^{n-1}(0)}{-u - f^{n-1}(\infty)} \bigg) \bigg]  \\
& = q_{n-1}^2 \cdot \bigg( \frac{u^2 - (f^{n-1}(0))^2}{u^2 - (f^{n-1}(\infty))^2} \bigg)
= q_{n-1}^2 \cdot \CR\big(w,f^n(0),f^n(\infty),f(\infty) \big)
\\
&= q_{n-1}^2 \cdot \bigg(\frac{f(\infty) - f^n(\infty)}{f(\infty) - f^n(0)} \bigg)
\bigg( \frac{w-f^n(0)}{w-f^n(\infty)} \bigg)
= q_n \cdot \bigg( \frac{w-f^n(0)}{w-f^n(\infty)} \bigg),
\end{align*}
where the fourth equality is by equation~\eqref{eq:CRrewrite}
and the fact that $f(u)=w$,
and the sixth is by definition of $q_n$.
Thus, we have proven the claim of equation~\eqref{eq:lemmaclaim}.

Returning to the notation of the statement of the lemma, the claim gives
\[ \bigg( \prod_{i=1}^m \alpha_i \cdot \prod_{i=1}^m \alpha'_i \bigg)^2
= q_\ell \bigg( \frac{x - f^{\ell}(0)}{x - f^{\ell}(\infty)} \bigg)
= q_{\ell} = q_{\ell-1}^2 \bigg( \frac{ f(\infty) - f^{\ell}(\infty)}{ f(\infty) - f^{\ell}(0)} \bigg)
= q_{\ell-1}^2, \]
where the second and fourth equalities are because $f^{\ell}(0)=f^{\ell}(\infty)$,
and the third is by definition of $q_n$. Thus, we have 
\[ \prod_{i=1}^m \alpha_i \cdot \prod_{i=1}^m \alpha'_i
= \pm q_{\ell-1}, \]
and by switching the roles of $\alpha_m$ and $-\alpha_m$ if necessary, we may
assume that this product is in fact $q_{\ell-1}$.
In addition, because $f^{\ell}(0)=f^{\ell}(\infty)$ but $f^{\ell-1}(0)\neq f^{\ell-1}(\infty)$,
we must have $f^{\ell-1}(\infty)= - f^{\ell-1}(0)$.
Therefore,
\begin{align*}
\bigg( \prod_{i=1}^m \alpha_i &+ \prod_{i=1}^m \alpha'_i \bigg)^2
 = \prod_{i=1}^m \alpha_i^2 + \prod_{i=1}^m \alpha'_i{}^2 
+ 2 \prod_{i=1}^m \alpha_i \cdot \prod_{i=1}^m \alpha'_i \\
& = q_{\ell - 1} \cdot \bigg[ \bigg( \frac{ y - f^{\ell-1}(0) }{ y - f^{\ell-1}(\infty) } \bigg)
+ \bigg(\frac{ -y - f^{\ell-1}(0) }{ -y - f^{\ell-1}(\infty) }\bigg) + 2 \bigg] \\
& = q_{\ell - 1} \cdot \bigg[
\frac{ (y- f^{\ell -1}(0))^2 + (y+ f^{\ell -1}(0))^2 + 2(y- f^{\ell -1}(0))(y+ f^{\ell -1}(0))}
{y^2 - (f^{\ell -1}(\infty))^2} \bigg] \\
& = q_{\ell-1} \cdot \bigg( \frac{4 y^2 }{y^2 - (f^{\ell-1}(\infty))^2} \bigg)
= 4q_{\ell-1} \cdot \bigg( \frac{y^2 - 0^2}{y^2 - (f^{\ell-1}(\infty))^2} \bigg)
\\
%& = -4Cq_{\ell-1} \bigg( \frac{x-f(0)}{x-f(\infty)} \bigg)
%\bigg[ -C \bigg( \frac{x-f(0)}{x-f(\infty)} \bigg) + C \frac{f^{\ell}(0)-f(0)}{f^{\ell}(0)-f(\infty)}
%\bigg) \bigg]^{-1} \\
%&= 4q_{\ell-1} \bigg( \frac{ (x-f(0)) (f^{\ell}(0) - f(\infty))}
%{ (x-f(0))(f^{\ell}(0) - f(\infty)) - (x-f(\infty))(f^{\ell}(0) - f(0)) } \bigg) \\
%&= 4q_{\ell-1} \cdot \frac{ (x-f(0)) (f^{\ell}(0) - f(\infty))} { (f^{\ell}(0) - x) (f(\infty) - f(0))}
%= 4q_{\ell-1} \cdot \CR\big( f^{\ell}(0), f(\infty), x, f(0) \big).
& = 4q_{\ell-1} \cdot \CR\big( x, f(0), f^{\ell}(\infty), f(\infty) \big),
\end{align*}
where the third equality is because $f^{\ell-1}(\infty)= - f^{\ell-1}(0)$,
and the final equality is by equation~\eqref{eq:CRrewrite}
and the fact that $f(y)=x$.
\end{proof}

The expression $\prod \alpha_i + \prod\alpha'_i$ in Lemma~\ref{lem:Qn}
involves each of the $2m=2^{\ell-1}$ pairs of nodes at level $\ell$ of the tree.
The next lemma involves all the nodes at level $n\geq\ell$,
by partitioning them into $2^{n-\ell}$ such sets of $2^{\ell-1}$ pairs, and
taking the product of all the resulting expressions $\prod \alpha_i + \prod\alpha'_i$.
This product is important because its square is invariant under the
action of $M_{\ell}$ on the tree. More precisely,
even without squaring, it is invariant under
$(\ell,n)$-even cousins maps in $M_{\ell}$,
and it is sent to its negative by $(\ell,n)$-odd cousins maps in $M_{\ell}$.

\begin{lemma}
\label{lem:Rn}
Let $f(z)=(Az^2+B)/(z^2+C)\in K(z)$ and $\ell\geq 2$ as in Lemma~\ref{lem:Qn},
%with the critical points $0,\infty$ non-preperiodic but colliding at the $\ell$-th iterate.
with the critical points $0,\infty$ colliding at the $\ell$-th iterate,
and with $\infty$ not in the forward orbit of $0$.
Let $x\in\PKbar$ not be in the forward orbit of $f(\infty)$,
%Let $x\in\PKbar$ not in the forward orbit of $f(0)$ or $f(\infty)$,
%, write $f^{-1}(w)=\{\pm u\}$ for some $u\in \Kbar$,
%and assume that the backward orbits of $u$ and $-u$ contain no critical points.
let $n\geq\ell+1$, and define $m:= 2^{\ell -2}$ and $R:=2^{n-\ell}$.
Write the $2R$ points of $f^{-(n-\ell+1)}(x)$ as $\pm\beta_1,\ldots,\pm\beta_R$.
For each $j=1,\ldots, R$, write
\[ f^{-(\ell-1)}(\beta_j) = \{\pm \alpha_{j,1} ,\ldots, \pm \alpha_{j,m}\}
\quad\text{and}\quad
f^{-(\ell-1)}(-\beta_j) = \{\pm \alpha'_{j,1} ,\ldots, \pm \alpha'_{j,m}\} .\]
If necessary, reverse the names of $\alpha_{j,m}$ and $-\alpha_{j,m}$
as dictated by Lemma~\ref{lem:Qn} for $\beta_j$ in the role of $y$.
Then
\[ \prod_{j=1}^R \bigg( \prod_{i=1}^m \alpha_{j,i} + \prod_{i=1}^m \alpha'_{j,i} \bigg)^2
= r_n^2 \cdot \CR\big( x, f^{n-\ell+1}(0), f^n(\infty), f(\infty) \big) , \]
%= r_n \cdot \bigg( \frac{w- f^{n-\ell + 1}(0)}{w-f^n(0)} \bigg), \]
where
\[ r_n := (4q_{\ell-1})^{2^{n-\ell-1}}
\prod_{i=1}^{n-\ell} \bigg( \frac{f^{\ell+i-1}(\infty) - f(\infty)}{f^{i}(0) - f(\infty)}
\bigg)^{2^{n-\ell-i}} \in K^{\times} , \]
and where $q_{\ell-1}\in K^{\times}$ is as in Lemma~\ref{lem:Qn}.
%\[ r_n := (4q_{\ell-1})^{2^{n-\ell}}
%\prod_{i=0}^{n-\ell} \bigg( \frac{f^{\ell+i}(0) - f(\infty)}{f^{i+1}(0) - f(\infty)}
%\bigg)^{2^{n-\ell-i}} \in K , \]
%\[ r_{\ell} := 4q_{\ell-1} \cdot \bigg( \frac{f^{\ell}(0) - f(\infty)}{f(0) - f(\infty)} \bigg) \in K
%\quad \text{and} \quad
%r_k := r_{k-1}^2 \cdot \bigg(\frac{f^{n}(0) - f(\infty)}{f^{n-\ell+1}(0) - f(\infty)} \bigg) \in K
%\quad \text{for } k\geq \ell+1. \]
\end{lemma}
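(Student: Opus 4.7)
The strategy is to reduce to an application of Lemma~\ref{lem:Qn} at each $\beta_j$, and then to collapse the resulting product of cross ratios by descending induction on the depth of the preimage tree below $x$. First, for each $j \in \{1, \ldots, R\}$, I would apply Lemma~\ref{lem:Qn} with $\beta_j$ in the role of $y$ and $f(\beta_j)$ in the role of $x$. The hypothesis that $f(\beta_j)$ is not in the forward orbit of $f(\infty)$ is inherited from $x$: if $f(\beta_j) = f^k(\infty)$ for some $k \geq 1$, then $x = f^{n-\ell}(f(\beta_j)) = f^{n-\ell+k}(\infty)$ would lie in that forward orbit, contradicting the hypothesis on $x$. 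Multiplying the $R$ resulting identities and noting that $\{f(\beta_j)\}_{j=1}^R$ is exactly $f^{-(n-\ell)}(x)$ yields
\[
\prod_{j=1}^R \Big(\prod_{i=1}^m \alpha_{j,i} + \prod_{i=1}^m \alpha'_{j,i}\Big)^2 = (4 q_{\ell-1})^R \prod_{\zeta \in f^{-(n-\ell)}(x)} \CR\big(\zeta, f(0), f^{\ell}(\infty), f(\infty)\big).
\]

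The heart of the proof is then to compute, for each $k \geq 0$ and each $w \in \PKbar$ not in the forward orbit of $f(\infty)$, the quantity $\Phi_k(w) := \prod_{\zeta \in f^{-k}(w)} \CR(\zeta, f(0), f^{\ell}(\infty), f(\infty))$. I claim
\[
\Phi_k(w) = \Big[\prod_{i=1}^k \Big(\tfrac{f^{\ell+i-1}(\infty) - f(\infty)}{f^i(0) - f(\infty)}\Big)^{2^{k-i}}\Big]^2 \cdot \CR\big(w, f^{k+1}(0), f^{\ell+k}(\infty), f(\infty)\big),
\]
and prove it by induction on $k$, with the base case $k=0$ being the definition of $\Phi_0$. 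For the inductive step, write $f^{-1}(w) = \{\pm u\}$ so that $\Phi_k(w) = \Phi_{k-1}(u) \Phi_{k-1}(-u)$, and combine the two factors by the elementary identity
\[
\CR(u, a, c, d) \cdot \CR(-u, a, c, d) = \CR(u^2, a^2, c^2, \infty) \cdot \Big(\tfrac{c - d}{a - d}\Big)^2,
\]
applied with $a = f^k(0)$, $c = f^{\ell+k-1}(\infty)$, and $d = f(\infty)$. Invoking equation~\eqref{eq:CRrewrite} in the form $\CR(u^2, a^2, c^2, \infty) = \CR(f(u), f(a), f(c), f(\infty)) = \CR(w, f^{k+1}(0), f^{\ell+k}(\infty), f(\infty))$ rewrites the right-hand side as a cross ratio at $w$ and produces the new correction factor indexed by $i = k$, closing the induction.

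To finish, I would specialize at $k = n - \ell$ and $w = x$ and substitute into the equation from the first paragraph. Writing $R = 2^{n-\ell} = 2 \cdot 2^{n-\ell-1}$, the prefactor regroups as $(4 q_{\ell-1})^R = \big((4 q_{\ell-1})^{2^{n-\ell-1}}\big)^2$, while the bracketed expression in the formula for $\Phi_{n-\ell}$ is already a perfect square; their product is precisely $r_n^2$, and the surviving cross ratio is $\CR(x, f^{n-\ell+1}(0), f^n(\infty), f(\infty))$, as desired. The main obstacle I anticipate is the pairing step: because $f(\infty)$ occupies the fourth slot of the cross ratio, the combination of $\CR(u, \cdot)$ and $\CR(-u, \cdot)$ is not directly handled by equation~\eqref{eq:CRrewrite} and instead picks up an explicit $(c-d)/(a-d)$ correction. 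Careful accounting of these correction factors across the $n-\ell$ levels of the induction is exactly what yields the explicit shape of $r_n$; one should also verify, using the standing assumption that no critical points occur in the backward orbit, that $u = 0$ and $u = \infty$ never arise during the descent, so that the pairing identity applies throughout.
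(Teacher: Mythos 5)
Your proof is correct, and it is essentially the paper's argument reorganized. The paper proves the identity by a single induction on $n\geq\ell$: the base case $n=\ell$ is Lemma~\ref{lem:Qn} itself (with the ad hoc convention $r_{\ell}:=\sqrt{4q_{\ell-1}}$, which need not lie in $K$), and the inductive step writes $f^{-1}(x)=\{\pm y\}$, applies the level-$(n-1)$ statement to $y$ and $-y$, and merges the two cross ratios via equation~\eqref{eq:CRrewrite}, picking up exactly your correction factor $\bigl(f^{n-1}(\infty)-f(\infty)\bigr)/\bigl(f^{n-\ell}(0)-f(\infty)\bigr)$ --- the same pairing computation as in your $\Phi_k$ induction. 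You instead apply Lemma~\ref{lem:Qn} once at each of the $R$ blocks and then collapse the resulting pure cross-ratio product $\Phi_k$; this cleanly separates the two ingredients and avoids carrying $\sqrt{4q_{\ell-1}}$ through the induction, at the modest cost of the hypothesis check that each $f(\beta_j)$ avoids the forward orbit of $f(\infty)$, which you supply correctly. Your closing concern is harmless for the same reasons the paper leaves it implicit: $u=\infty$ would force $w=f(\infty)$ and hence $x$ into the forward orbit of $f(\infty)$, which is excluded; and in the only remaining degenerate possibility $w=f(0)$ (which requires $x=f^k(0)$ with $k\leq\ell-1$), the pairing identity still holds with $u=-u=0$ once preimages are counted with multiplicity, exactly as in the paper's own bookkeeping.
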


\begin{proof}
We proceed by induction on $n\geq \ell$.
Note that the lemma is only stated for $n\geq\ell+1$, but the formula for $r_n$
makes sense for $n=\ell$ and yields $r_{\ell}:=\sqrt{4 q_{\ell-1}}$, even though that
value may not lie in $K$.
With this definition of $r_{\ell}$,
the case $n=\ell$ is exactly the content of Lemma~\ref{lem:Qn}.
For the rest of the proof, then, consider $n\geq \ell+1$,
and suppose the statement holds for $n-1$.

\begin{figure}
\begin{tikzpicture}
\path[fill] (1.1,1.5) circle (0.1);
\path[fill] (5.3,1.5) circle (0.1);
\path[draw, dashed] (1.1,1.5) -- (3.2,0.5) -- (5.3,1.5);
\path[fill] (9.1,1.5) circle (0.1);
\path[fill] (13.3,1.5) circle (0.1);
\path[draw, dashed] (9.1,1.5) -- (11.2,0.5) -- (13.3,1.5);
\path[draw] (3.2,0.5) -- (7.2,-0.1) -- (11.2,0.5);
\path[fill] (3.2,0.5) circle (0.1);
\path[fill] (11.2,0.5) circle (0.1);
\path[fill] (7.2,-0.1) circle (0.1);
\node (x0) at (6.9,-0.3) {\small $x$};
\node (a0) at (2.9,0.3) {$y$};
\node (b0) at (11.6,0.3) {$-y$};
\path[fill] (0.6,2.0) circle (0.1);
\path[fill] (1.6,2.0) circle (0.1);
\path[fill] (4.8,2.0) circle (0.1);
\path[fill] (5.8,2.0) circle (0.1);
\path[draw] (0.6,2.0) -- (1.1,1.5) -- (1.6,2.0);
\path[draw] (4.8,2.0) -- (5.3,1.5) -- (5.8,2.0);
\node (be1) at (0.3,1.9) {\footnotesize $\beta_1$};
\node (mbe1) at (2.1,1.9) {\footnotesize $-\beta_1$};
\node (fbe1) at (0.6,1.3) {\footnotesize $f(\beta_1)$};
\node (dots1) at (3.3,2.0) {\footnotesize $\cdots$};
\node (dots1l) at (3.3,1.5) {\footnotesize $\cdots$};
\node (dots1h) at (3.3,3.4) {\footnotesize $\cdots$};
\node (be2) at (4.45,1.82) {\footnotesize $\beta_{R/2}$};
\node (mbe2) at (6.35,1.82) {\footnotesize $-\beta_{R/2}$};
\node (fbe2) at (5.95,1.3) {\footnotesize $f(\beta_{R/2})$};
\path[draw, dashed] (0.6,2.0) -- (0,3.4) -- (1.0,3.4) -- (0.6,2.0);
\path[draw, dashed] (1.6,2.0) -- (1.2,3.4) -- (2.2,3.4) -- (1.6,2.0);
\path[draw, dashed] (4.8,2.0) -- (4.2,3.4) -- (5.2,3.4) -- (4.8,2.0);
\path[draw, dashed] (5.8,2.0) -- (5.4,3.4) -- (6.4,3.4) -- (5.8,2.0);
\draw[<->] (0,3.6) -- (1.0,3.6);
\draw[<->] (1.2,3.6) -- (2.2,3.6);
\draw[<->] (4.2,3.6) -- (5.2,3.6);
\draw[<->] (5.4,3.6) -- (6.4,3.6);
\node (al1) at (0.5, 3.9) {\footnotesize $\alpha_{1,i}$};
\node (pal1) at (1.7,3.9) {\footnotesize $\alpha'_{1,i}$};
\node (al2) at (4.7, 3.9) {\footnotesize $\alpha_{R/2,i}$};
\node (pal2) at (5.9,3.9) {\footnotesize $\alpha'_{R/2,i}$};
\path[fill] (8.6,2.0) circle (0.1);
\path[fill] (9.6,2.0) circle (0.1);
\path[fill] (12.8,2.0) circle (0.1);
\path[fill] (13.8,2.0) circle (0.1);
\path[draw] (8.6,2.0) -- (9.1,1.5) -- (9.6,2.0);
\path[draw] (12.8,2.0) -- (13.3,1.5) -- (13.8,2.0);
\node (be3) at (8.2,1.82) {\footnotesize $\beta_{R/2+1}$};
\node (mbe3) at (10.25,1.82) {\footnotesize $-\beta_{R/2+1}$};
\node (fbe3) at (8.55,1.2) {\footnotesize $f(\beta_{R/2+1})$};
\node (dots2) at (11.4,2.0) {\footnotesize $\cdots$};
\node (dots2l) at (11.4,1.5) {\footnotesize $\cdots$};
\node (dots2h) at (11.4,3.4) {\footnotesize $\cdots$};
\node (be4) at (12.5,1.85) {\footnotesize $\beta_{R}$};
\node (mbe4) at (14.3,1.85) {\footnotesize $-\beta_{R}$};
\node (fbe4) at (13.9,1.3) {\footnotesize $f(\beta_{R})$};
\path[draw, dashed] (8.6,2.0) -- (8,3.4) -- (9.0,3.4) -- (8.6,2.0);
\path[draw, dashed] (9.6,2.0) -- (9.2,3.4) -- (10.2,3.4) -- (9.6,2.0);
\path[draw, dashed] (12.8,2.0) -- (12.2,3.4) -- (13.2,3.4) -- (12.8,2.0);
\path[draw, dashed] (13.8,2.0) -- (13.4,3.4) -- (14.4,3.4) -- (13.8,2.0);
\draw[<->] (8,3.6) -- (9.0,3.6);
\draw[<->] (9.2,3.6) -- (10.2,3.6);
\draw[<->] (12.2,3.6) -- (13.2,3.6);
\draw[<->] (13.4,3.6) -- (14.4,3.6);
\node (al3) at (8.3, 3.9) {\footnotesize $\alpha_{R/2+1,i}$};
\node (pal3) at (9.9,3.9) {\footnotesize $\alpha'_{R/2+1,i}$};
\node (al4) at (12.7, 3.9) {\footnotesize $\alpha_{R,i}$};
\node (pal4) at (13.9,3.9) {\footnotesize $\alpha'_{R,i}$};
\end{tikzpicture}
\caption{Lemma~\ref{lem:Rn}}
\label{fig:RnLemma}
\end{figure}

Write $f^{-1}(x)=\{\pm y\}$.
After re-indexing the $\beta_i$'s if necessary, we have
%$f^{-(n-\ell)}(y)=\{\pm\beta_1,\ldots,\pm\beta_{R/2} \}$
%and $f^{-(n-\ell)}(-y)=\{\pm\beta_{1+R/2},\ldots,\pm\beta_{R} \}$;
\[ f^{-(n-\ell)}(y)=\{\pm\beta_1,\ldots,\pm\beta_{R/2} \}
\quad\text{and}
f^{-(n-\ell)}(-y)=\{\pm\beta_{1+R/2},\ldots,\pm\beta_{R} \} \]
as in Figure~\ref{fig:RnLemma}.
Thus,
\begin{align*}
\prod_{j=1}^R \bigg( \prod_{i=1}^m & \alpha_{j,i} + \prod_{i=1}^m \alpha'_{j,i} \bigg)^2
=
\prod_{j=1}^{R/2} \bigg( \prod_{i=1}^m \alpha_{j,i} + \prod_{i=1}^m \alpha'_{j,i} \bigg)^2
\cdot
\prod_{j=1+R/2}^R \bigg( \prod_{i=1}^m \alpha_{j,i} + \prod_{i=1}^m \alpha'_{j,i} \bigg)^2
\\
&= r_{n-1}^2 \CR\big(y, f^{n-\ell}(0), f^{n-1}(\infty), f(\infty) \big) \cdot
r_{n-1}^2 \CR\big(-y, f^{n-\ell}(0), f^{n-1}(\infty), f(\infty) \big)
\\
&= \bigg[ r_{n-1}^2 \cdot \bigg( \frac{f^{n-1}(\infty) - f(\infty)}{f^{n-\ell}(0)-f(\infty)} \bigg) \bigg]^2
\cdot \bigg(
\frac{ (y - f^{n-\ell}(0))(-y - f^{n-\ell}(0))}{ (y - f^{n-1}(\infty))(-y - f^{n-1}(\infty))} \bigg)
\\
&= r_n^2 \cdot \frac{y^2 - f^{n-\ell}(0)^2}{y^2 - f^{n-1}(\infty)^2}
= r_n^2 \cdot \CR\big(x, f^{n-\ell+1}(0), f^n(\infty), f(\infty) \big),
\end{align*}
where the second equality is by the inductive hypothesis, the fourth
is by the definition of $r_n$, and the fifth
is by equation~\eqref{eq:CRrewrite} and the fact that $f(y)=x$.
\end{proof}

We are now prepared to prove our second main result, 
Theorem~\ref{thm:main2}, which we restate below in expanded form
as Theorem~\ref{thm:main2big}.
As we have assumed throughout this section, recall
that $K$ is a field of characteristic different from $2$, that
$f\in K(z)$ is a rational function of degree~$2$ with critical points
$\xi_1,\xi_2\in\PKbar$, and that $\delta\in K^{\times}$
is the discriminant of the minimal polynomial of $\xi_1$ over $K$.
% which we understand to be $\delta=1$ if $\xi_1\in\PK$.
%We fix $x_0\in\PK$, with associated extension fields $K_n$, $K_{\infty}$
%and Galois groups $G_n=\Gal(K_n/K)$ and $G_\infty(K_\infty/K)$.
We also define the quantities $\kappa_n\in L:=K(\sqrt{\delta})$
as in Definition~\ref{def:kappa}.

For any integer $n\geq 0$, recall from Definition~\ref{def:Mfinite}
that $M_{\ell,n}$ and $\widetilde{M}_{\ell,n}$ are
the quotients of the groups $M_{\ell}$ and $\widetilde{M}_{\ell}$
formed by restricting to the subtree $T_n$.
For ease of notation in stating Theorem~\ref{thm:main2big} below,
we will also sometimes denote as $M_{\ell}$ itself as $M_{\ell,\infty}$,
and $\widetilde{M}_{\ell}$ as $\widetilde{M}_{\ell,\infty}$.
As in Section~\ref{ssec:PinkDes}, we have
\[ M_{\ell,n}=\widetilde{M}_{\ell,n}=\Aut(T_n) \;\; \text{for } n\leq \ell-1,
\quad\text{and}\quad [\widetilde{M}_{\ell,n}:M_{\ell,n}]=2 \;\; \text{for } n\geq \ell. \]
As always, recall that when we say that two groups acting on a tree are isomorphic,
we mean that the isomorphism is equivariant with respect to the action.

\begin{thm}
\label{thm:main2big}
With notation and assumptions as stated just above,
let $N$ be either $\infty$ or a positive integer.
\begin{enumerate}
\item If $\delta$ is a square in $K$, then the following are equivalent:
\begin{enumerate}
\item No finite product $\dsps \kappa_{i_1} \cdots \kappa_{i_m}$
(for integers $1\leq i_1 < \cdots < i_m\leq N$ and $m\geq 1$) is a square in $K$.
\item $G_N\cong M_{\ell,N}$.
\end{enumerate}
\item If $\delta$ is not a square in $K$, then $\kappa_\ell \delta$ is
a square in $K$. In addition, if $N\geq \ell$, then the following are equivalent:
\begin{enumerate}
\item The only finite product $\dsps \kappa_{i_1} \cdots \kappa_{i_m}$
(for integers $1\leq i_1 < \cdots < i_m\leq N$ and $m\geq 1$) that is a square in $L$
is the single element $\kappa_{\ell}$.
\item $G_N\cong \widetilde{M}_{\ell,N}$.
\end{enumerate}
\smallskip
On the other hand, still assuming $\delta$ is not a square in $K$,
if instead $1\leq N\leq \ell-1$, then the following are equivalent:
\begin{enumerate}
\item No finite product $\dsps \kappa_{i_1} \cdots \kappa_{i_m}$
(for integers $1\leq i_1 < \cdots < i_m\leq N$ and $m\geq 1$) is a square in $K$.
\item $G_N\cong \widetilde{M}_{\ell,N}$.
\end{enumerate}
\end{enumerate}
\end{thm}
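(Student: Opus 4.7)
The plan is to read each $\kappa_n$ as a Kummer element whose induced quadratic character on $G_N$ coincides with the $n$-th coordinate of the abelianization homomorphism $\psi_N$ of Theorem~\ref{thm:abelian}, and then to climb the preimage tree one level at a time using Proposition~\ref{prop:generate} and Theorem~\ref{thm:generate} as the engines. First I would normalize the map: by Proposition~\ref{prop:coordchange} we may replace $f$ by any $\PGL(2,K)$-conjugate without affecting the squareness class of any product of $\kappa_i$, and when $\delta \in K^{\times 2}$ both critical points lie in $\PK$, so after sending them to $0$ and $\infty$ we may assume $f(z) = (Az^2+B)/(z^2+C)$, which is the normal form required by Lemmas~\ref{lem:Qn} and~\ref{lem:Rn}. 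When $\delta \notin K^{\times 2}$ the same normalization is available over $L = K(\sqrt{\delta})$; in that case the identity $\kappa_\ell \delta \in K^{\times 2}$ promised by~(2) drops out of Corollary~\ref{cor:iterdisc} combined with Proposition~\ref{prop:DPQformula} and the calculation in the proof of Theorem~\ref{thm:discsquare} that $\Res(P,Q) \cdot \prod_i H_\ell(\eta_i,\theta_i) \in K^{\times 2}$.

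The central technical step is to exhibit, for each $n$, an explicit element $\sqrt{\kappa_n} \in K_N$ (respectively $L \cdot K_N$) and then to identify the resulting quadratic character $\chi_n : G_N \to \{\pm 1\}$, determined by $\sigma(\sqrt{\kappa_n}) = \chi_n(\sigma)\sqrt{\kappa_n}$, with the $n$-th coordinate of $\psi_N$. For $1 \leq n \leq \ell-1$ this is essentially tautological: $\sqrt{\kappa_n} = \sqrt{\Delta(H_n)}$ is the Vandermonde product over the $2^n$ elements of $f^{-n}(x_0)$, on which any Galois element acts by its sign as a permutation, i.e.\ by $\sgn_n(\sigma,x_0)$. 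For $n = \ell$ in the square case, and for $n \geq \ell + 1$, Lemmas~\ref{lem:Qn} and~\ref{lem:Rn} furnish the needed square root explicitly as a product, over the $2^{n-\ell}$ grandparent-pairs at level $n - \ell$, of expressions of the form $\prod_i \alpha_{j,i} + \prod_i \alpha'_{j,i}$ built from the $2^{\ell}$ descendants of each such grandparent. A direct check against Definition~\ref{def:cousins} shows that this combination is fixed by $(\ell,n)$-even cousins maps above $x_0$ and negated by $(\ell,n)$-odd cousins maps above $x_0$, which is exactly the $n$-th coordinate of $\psi_N$. I expect this identification of $\chi_n$ with the cousins-parity character to be the main obstacle, since it requires tracking carefully how a Galois element permutes the signed pairings arising from the lemmas and verifying that the auxiliary choice $\alpha_m \leftrightarrow -\alpha_m$ made in Lemma~\ref{lem:Qn} is compatible across levels.

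With this identification in place, the rest is a clean induction combined with standard Kummer theory. When $\delta \in K^{\times 2}$, the hypothesis that no nontrivial product $\kappa_{i_1}\cdots \kappa_{i_m}$ is a square in $K$ translates via Kummer theory into linear independence of $\chi_1, \ldots, \chi_N$ in $\Hom(G_N, \{\pm 1\})$, equivalently surjectivity of the composition $G_N \hookrightarrow M_{\ell,N} \twoheadrightarrow M_{\ell,N}^{\textup{ab}} \cong \{\pm 1\}^N$. To promote abelianization surjectivity to full equality $G_N = M_{\ell,N}$, I would induct on $n$: assuming $G_{n-1} = M_{\ell,n-1}$, nontriviality of $\chi_n$ on $\Gal(K_n/K_{n-1})$ produces an element of that subgroup that acts trivially on $T_{n-1}$ while being either odd (for $n \leq \ell-1$) or an $(\ell,n)$-odd cousins map above $x_0$ (for $n \geq \ell$), and Proposition~\ref{prop:generate} or Theorem~\ref{thm:generate} then forces $G_n = M_{\ell,n}$. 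When $\delta \notin K^{\times 2}$ and $N \geq \ell$, we apply the same argument after replacing $K$ by $L$ and $G_N$ by $G_N' := \Gal(K_N/L) \subseteq M_{\ell,N}$; the caveat that $\kappa_\ell$ is automatically a square in $L$ (since $\kappa_\ell \delta \in K^{\times 2}$ and $\delta \in L^{\times 2}$) explains why only the single product $\kappa_\ell$ is allowed to be a square in~(2a), and combining $G_N' = M_{\ell,N}$ with $[G_N : G_N'] = 2$ from Theorem~\ref{thm:main1} yields $G_N = \widetilde{M}_{\ell,N}$. The case $\delta \notin K^{\times 2}$ with $N \leq \ell-1$ is subsumed in the first since $\widetilde{M}_{\ell,N} = M_{\ell,N} = \Aut(T_N)$ at those levels. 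Each converse direction is the contrapositive: a nontrivial multiplicative relation among the $\kappa_i$ modulo squares forces a linear dependence among the $\chi_i$ and drops the rank of $G_N^{\textup{ab}}$ below that of $M_{\ell,N}^{\textup{ab}}$, precluding equality of $G_N$ with $M_{\ell,N}$.
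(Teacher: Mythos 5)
Your outline reproduces the paper's own architecture almost step for step: normalize via Proposition~\ref{prop:coordchange} to $f(z)=(Az^2+B)/(z^2+C)$, extract explicit square roots of the $\kappa_n$ inside $K_n$ from $\Delta(H_n)$ and Lemmas~\ref{lem:Qn} and~\ref{lem:Rn}, play the Kummer-theoretic count against $M_{\ell,n-1}^{\textup{ab}}\cong\{\pm1\}^{n-1}$ from Theorem~\ref{thm:abelian} to force $\sqrt{\kappa_n}\notin K_{n-1}$, climb the tree with Proposition~\ref{prop:generate} and Theorem~\ref{thm:generate}, get $\kappa_\ell\delta\in K^{\times 2}$ from Corollary~\ref{cor:iterdisc}, Theorem~\ref{thm:discsquare} and Proposition~\ref{prop:DPQformula}, and handle the nonsquare-$\delta$ case through $G'_N=\Gal(K_N/L)$ and the index-$2$ relation. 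However, two points in your write-up are genuine gaps rather than routine checks. First, your converse direction is not valid as stated: a multiplicative relation $\kappa_{i_1}\cdots\kappa_{i_m}\in K^{\times 2}$ makes the specific Kummer characters $\chi_{i_j}$ dependent, but it does not by itself ``drop the rank of $G_N^{\textup{ab}}$ below that of $M_{\ell,N}^{\textup{ab}}$''; $G_N^{\textup{ab}}$ could a priori still have full rank via other characters. To make your inference work you would need your claimed identification of $\chi_n$ with the $n$-th coordinate of $\psi_N$ on \emph{all} of $G_N$, and that identification is not a ``direct check'': for $\sigma$ not fixing level $n-1$, the cousins parity $e_n(\sigma)$ compares labels above different nodes and depends on the labeling produced (by ad hoc relabelings) in Theorem~\ref{thm:main1}, while $\chi_n(\sigma)=\sigma(\pi_n)/\pi_n$ is algebraically defined and depends on the sign normalizations $\alpha_{j,m}\leftrightarrow-\alpha_{j,m}$ of Lemma~\ref{lem:Qn}; there is no reason these two conventions are compatible without further work. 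The paper needs the correspondence only on $\Gal(K_n/K_{n-1})$, where it is labeling-independent, and proves the converse differently: take the \emph{minimal} index $n$ occurring in a square product, use the forward argument to get $G_{n-1}\cong M_{\ell,n-1}$, observe that $\sqrt{\kappa_n}\in K_{n-1}$ forces every element of $\Gal(K_n/K_{n-1})$ to be even (if $n\leq\ell-1$) or an $(\ell,n)$-even cousins map (if $n\geq\ell$), and conclude $G_n\subsetneq M_{\ell,n}$ by a cardinality count, which rules out isomorphism under any labeling.

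Second, you have omitted the degenerate case in which a critical point lies in $f^{-n}(x_0)$ for some $n$, so that $\kappa_n=0$. There your characters $\chi_n$ are not even defined, and condition (a) fails automatically since $0$ is a square; the paper disposes of this at the outset by noting that two level-$n$ nodes then carry the same point, so every Galois element acts identically on the two subtrees above them, whereas $M_\ell$ contains elements that do not, so (b) fails as well. Relatedly, your phrase ``apply the same argument after replacing $K$ by $L$'' in case (2) glosses the level-$\ell$ step: since $\kappa_\ell\in L^{\times2}$, the induction over $L$ cannot produce $\Aut(T_\ell)$ there; the paper first shows $G_\ell\cong\Aut(T_\ell)$ over $K$ (using that no product of $\kappa_1,\ldots,\kappa_\ell$ is a square in $K$, because $\kappa_\ell\equiv\delta$ modulo squares) and only then identifies $G'_\ell$ with $M_{\ell,\ell}$ as the index-$2$ subgroup acting evenly at level $\ell$, before resuming the induction at levels $n\geq\ell+1$. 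With these repairs your argument becomes the paper's proof; without them the converse implications and the boundary cases do not go through.
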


\begin{proof}
%[Proof of Theorem~\ref{thm:main2}]
%\label{prf: main2}
If either of the critical points $\xi_i$ lies in $f^{-n}(x_0)$ for some $n\geq 1$,
then Definition~\ref{def:kappa} yields $\kappa_n=0$, forcing each of conditions (1a) and (2a) to be false.
At the same time, two nodes at level $n$ of the tree must both correspond to $\xi_i$,
and hence any $\sigma\in G_{\infty}$ must act in exactly the same way on the
subtrees rooted at those two nodes.
Since there are elements of $M_{\ell}$ that act differently on any two such subtrees,
it follows that both versions of statement~(b) are also false.

Thus, we may assume for the remainder of the proof that
$x_0$ does not lie in the forward orbit of either $f(\xi_1)$ or $f(\xi_2)$.
In addition, as noted just before Definition~\ref{def:kappa},
by switching their roles if necessary,
we may further assume that $\xi_2$ is not in the forward orbit of $\xi_1$.

\medskip

\textbf{Case~1}. If $\delta$ is a square in $K$, then $\xi_1,\xi_2\in\PK$ are $K$-rational.
Hence, there is a linear fractional transformation $\nu\in\PGL(2,K)$
such that $\nu(0)=\xi_1$ and $\nu(\infty)=\xi_2$.
By Proposition~\ref{prop:coordchange}, changing coordinates by $\nu$
does not affect condition (a) of statement (1) of the theorem.
In addition, as noted at the start of the proof of that proposition,
the discriminant of the conjugate $\nu\circ f \circ \nu^{-1}$ is $\delta$ times a square in $K$,
and hence is itself a square in $K$.
Furthermore, this $K$-rational coordinate change does not change the field extensions
$K_n$ or $K_{\infty}$, and hence it also does not change the Galois groups $G_n$ or $G_{\infty}$
or their action on the tree of preimages.
Thus, condition (b) is also unaffected by this coordinate change.

Therefore, we may assume without loss of generality that
$\xi_1=0$ and $\xi_2=\infty$. Then $f(z)$ is a function of $z^2$.
Moreover, the fact that the critical points collide implies that $f$ cannot be a polynomial.
Thus, we have $f(z) = (Az^2 + B)/(z^2 +C)$ as in equation~\eqref{eq:ABCform},
with $AC-B\neq 0$.
(This last condition is because $\deg(f)=2$, and hence there is no cancellation.)

According to Theorem~\ref{thm:main1}, we may
label the tree so that $G_{\infty}$ is a subgroup of $M_{\ell}$,
since we have assumed that $\delta$ is a square in $K$.
%As in Section~\ref{sec:collideM},
%combining Corollary~\ref{cor:iterdisc}, Theorem~\ref{thm:discsquare},
%and Proposition~\ref{prop:DPQformula}
%shows that $\Delta(H_{\ell})\delta$ is a square in $K$.

\medskip
\noindent
\textbf{Claim}: 
For each integer $n\geq 1$, the quantity $\kappa_n\in K$ is a square in $K_n$.

\medskip

To prove the claim, first consider $1\leq n\leq \ell-1$.
Then the discriminant $\Delta(H_n)=\kappa_n$ is a square in $K_n$,
because all of the roots of $H_n$ are defined over $K_n$.
%By the same argument, the claim also holds for $n=\ell$ in the case
%that $\delta$ is not a square in $K$.

Next, consider $n\geq \ell+1$. Then
\[ \kappa_n = \CR\big(x_0,f^{n-\ell+1}(\xi_1), f^n(\xi_2), f(\xi_2)\big)
= \CR\big(x_0,f^{n-\ell+1}(0), f^n(\infty), f(\infty)\big) \]
is a square in $K_n$ by Lemma~\ref{lem:Rn}.
Indeed, with $x:=x_0$ in that lemma, the points $\alpha_{j,i}$ and $\alpha'_{j,i}$
lie in $f^{-n}(x_0)$, and $r_n\in K^{\times}\subseteq K_n^{\times}$.
Thus, the conclusion of the lemma yields that $\kappa_n$ is the square
of an explicit element of $K_n$.

The remaining possibility is that $n=\ell$.
% and $\delta$ is not a square in $K$.
Then the quantity $q_{\ell-1}\in K$ of Lemma~\ref{lem:Qn}
is a square in $K$ times
\[ \frac{f(\infty) - f^{\ell-1}(\infty)}{f(\infty) - f^{\ell-1}(0)}
= \frac{f(\xi_2) - f^{\ell-1}(\xi_2)}{f(\xi_2) - f^{\ell-1}(\xi_1)} \]
if $\ell\geq 3$, or times
\[ -4C = \Delta(-X^2-CY^2)) = \Delta(\theta_2 P - \eta_2 Q) \]
if $\ell=2$, since $(\eta_2,\theta_2)=(1,0)$ and $Q(X,Y)=X^2+CY^2$.
Thus, according to Lemma~\ref{lem:Qn} and Definition~\ref{def:kappa},
it follows that $\kappa_n$ is a square in $K_n$, proving our claim.

\medskip
\noindent
\textbf{1(a)$\boldsymbol{\Rightarrow}$1(b)}:
For this implication, since $G_{\infty}\cong\varprojlim G_n$,
it suffices to show that $G_n\cong M_{\ell,n}$ for each integer $0 \leq n \leq N$.
We proceed by induction on $n$.
The desired isomorphism holds trivially for $n=0$.

For arbitrary $1\leq n\leq N$, assume $G_{n-1}\cong M_{\ell,n-1}$.
By Theorem~\ref{thm:abelian}, the abelianization $G_{n-1}^{\textup{ab}}$ of $G_{n-1}$
is isomorphic to $\{\pm 1\}^{n-1}$.
By our claim, for each $i=1,\ldots,n-1$, the quantity $\kappa_i$
is a square in $K_i\subseteq K_{n-1}$.
If $\kappa_n$ were also a square in $K_{n-1}$, then we would have
\[ L'_n := K(\sqrt{\kappa_1},\ldots,\sqrt{\kappa_n}) \subseteq K_{n-1}.\]
However, it follows from condition 1(a) that $\Gal(L'_n/K)\cong \{\pm1\}^n$,
which is a strictly larger abelian group than the abelianization 
$G_{n-1}^{\textup{ab}}\cong\{\pm 1\}^{n-1}$. This is a contradiction,
since $K\subseteq L'_n\subseteq K_{n-1}$
and $L'_n/K$ is an abelian extension.

Thus, it must be that $\kappa_n$ is \emph{not} a square in $K_{n-1}$.
Again by our claim, we have $\sqrt{\kappa_n}\in K_n$, so there exists
\[ \sigma_n\in\Gal(K_n/K_{n-1}) \quad\text{with}\quad
\sigma_n(\sqrt{\kappa_n}) = -\sqrt{\kappa_n} .\]
That is, $\sigma_n$ acts trivially on the subtree $T_{n-1}$,
but it sends $\sqrt{\kappa_n}$ to its negative.

If $n\leq \ell-1$, then since $\kappa_n=\Delta(H_n)$,
we must have $\sgn_n(\sigma_n,x_0)=-1$.
By Proposition~\ref{prop:generate}, we have $G_n\cong \Aut(T_n)=M_{\ell,n}$, as desired.

If $n= \ell$, then writing
\[ f^{-n}(x_0)=\{\pm\alpha_1,\ldots,\pm\alpha_m\} \cup
\{\pm\alpha'_1,\ldots,\pm\alpha'_m\} \]
as in Lemma~\ref{lem:Qn}, we have that
\[ \pi_n:=\prod_{i=1}^m \alpha_i + \prod_{i=1}^m \alpha'_i \in K_n \]
is $\sqrt{\kappa_n}$ times an element of $K^{\times}\subseteq K_{n-1}^{\times}$.
Thus, we must have $\sigma_n(\pi_n)=-\pi_n$.
Since $\sigma_n$ fixes each node $f(\alpha_i)$ and $f(\alpha'_i)$ at level $n-1$,
it follows that $\sigma_n$ acts as an odd permutation of both 
$\{\pm\alpha_1,\ldots,\pm\alpha_m\}$
and $\{\pm\alpha'_1,\ldots,\pm\alpha'_m\}$.
That is, $\sigma_n$ is an $(\ell,\ell)$-odd cousins map.
By Theorem~\ref{thm:generate}, we again have $G_n\cong M_{\ell,n}$.

Finally, if $n\geq \ell+1$, then writing
\[ f^{-n}(x_0)=\bigcup_{j=1}^R \Big( \{\pm\alpha_{j,1},\ldots,\pm\alpha_{j,m}\} \cup
\{\pm\alpha'_{j,1},\ldots,\pm\alpha'_{j,m}\} \Big )\]
as in Lemma~\ref{lem:Rn}, we have that
\[ \pi_n:=\prod_{j=1}^R\big( \prod_{i=1}^m \alpha_{j,i} + \prod_{i=1}^m \alpha'_{j,i} \big) \in K_n \]
is $\sqrt{\kappa_n}$ times an element of $K^{\times}\subseteq K_{n-1}^{\times}$.
Thus, we again have $\sigma_n(\pi_n)=-\pi_n$,
while $\sigma_n$ also fixes each node $f(\alpha_{j,i})$ and $f(\alpha'_{j,i})$ at level $n-1$.
Hence, for an odd number of indices $j$,
$\sigma_n$ acts as an odd permutation of both 
$\{\pm\alpha_{j,1},\ldots,\pm\alpha_{j,m}\}$
and $\{\pm\alpha'_{j,1},\ldots,\pm\alpha'_{j,m}\}$.
That is, $\sigma_n$ is an $(\ell,n)$-odd cousins map,
and therefore by Theorem~\ref{thm:generate}, we conclude
that $G_n\cong M_{\ell,n}$, completing our induction.

\medskip
\noindent
\textbf{1(b)$\boldsymbol{\Rightarrow}$1(a)}:
We prove the contrapositive of this implication.
Consider a product $\kappa_{i_1} \cdots \kappa_{i_m}$ that is a square in $K$,
with $1\leq i_1 < \cdots < i_m \leq N$ and $m\geq 1$.
Without loss, assume $n:=i_m$ is the smallest index for which such a product exists.

Because no nonempty product of $\kappa_i$'s with $i<n$ is a square in $K$,
the argument from the previous implication shows that $G_{n-1}\cong M_{\ell,n-1}$.
By the claim within that implication, each $\kappa_i$ is
a square in $K_i^{\times}\subseteq K_{n-1}^{\times}$. That is, we have
\[ L'_{n-1} := K(\sqrt{\kappa_1},\ldots,\sqrt{\kappa_{n-1}}) \subseteq K_{n-1}.\]
Since the product $\kappa_{i_1} \cdots \kappa_{i_m}$ is a square
in $K^{\times}\subseteq K_{n-1}^{\times}$, it follows that
\begin{equation}
\label{eq:kappasquare}
\sqrt{\kappa_n}=\sqrt{\kappa_{i_m}}\in L'_{n-1}\subseteq K_{n-1}.
\end{equation}

If $n\leq \ell-1$, then $\kappa_n=\Delta(H_n)$.
Therefore, equation~\eqref{eq:kappasquare} implies that every
$\sigma\in\Gal(K_n/K_{n-1})$ acts as an even permutation of the nodes at level $n$
of the tree. But $M_{\ell,n}=\Aut(T_n)$ includes automorphisms
that fix all the nodes below level $n$ and yet are odd at level $n$.
Thus, $G_n \subsetneq M_{\ell,n}$.
In fact, because $M_{\ell,n}$ is a finite group, it follows
that $G_n$ is not isomorphic to $M_{\ell,n}$,
even if we were to label the tree differently.
%(even if we were to label the tree differently).
%and hence $G_{\infty}\not\cong M_\ell$.

If $n\geq \ell$, then with notation as in the previous implication,
we also have that $\pi_n$ is a square in $K_{n-1}$,
and hence every $\sigma\in\Gal(K_n/K_{n-1})$ is an $(\ell,n)$-even cousins map.
However, $M_{\ell,n}$ includes odd cousins maps that fix the nodes below level $n$.
Thus, as in the $n\leq \ell-1$ case above, it follows that
$G_n \subsetneq M_{\ell,n}$, and hence
that $G_n$ is not isomorphic to $M_{\ell,n}$.
%and hence that $G_{\infty}\not\cong M_\ell$.

\medskip
\noindent
\textbf{Case 2.}
As in Section~\ref{sec:collideM}, we have that
$\kappa_{\ell}\delta=\Delta(H_{\ell})\delta$ is a square in $K$, by
Corollary~\ref{cor:iterdisc}, Theorem~\ref{thm:discsquare},
and Proposition~\ref{prop:DPQformula}.
Thus, we have proven the first desired statement.

Moreover, if $N\leq \ell-1$, then $\widetilde{M}_{\ell,N}=M_{\ell,N}$,
and so we are already done by Case~1.
Therefore, we may assume for the remainder of the proof that $N\geq\ell$.

As in Case~1, we have $\sqrt{\Delta(H_{\ell})}\in K_\ell$,
and hence $L = K(\sqrt{\delta})\subseteq K_n$ for every $n\geq \ell$.
Thus, we may define $G'_n:=\Gal(K_n/L)$ for each integer $n\geq \ell$, and
$G'_{\infty}:=\Gal(K_{\infty}/L)$.
By Theorem~\ref{thm:main1} and Corollary~\ref{cor:main1},
the Galois groups $G_{n}$ and $G'_{n}$
are equivariantly isomorphic to subgroups of
$\widetilde{M}_{\ell,n}$ and  $M_{\ell,n}$, respectively, for every $1\leq n\leq \infty$.

\medskip
\noindent
\textbf{2(a)$\boldsymbol{\Rightarrow}$2(b)}:
Since $\kappa_{\ell}\delta$ is a square in $K$ but $\delta$ is not,
condition~2(a) implies that no nontrivial product $\kappa_{i_1}\cdots\kappa_{i_m}$
with $1\leq i_1 < \cdots < i_m\leq \ell$ is a square in $K$.
By the same argument as in the 1(a)$\Rightarrow$1(b) proof above,
it follows that
% $G_{n}\cong\Aut(T_n) = \widetilde{M}_{\ell,n}$ if $n< \ell$, and that
%$G_{\ell}\cong\Aut(T_\ell) = \widetilde{M}_{\ell,\ell}$ if $n\geq \ell$.
$G_{\ell}\cong\Aut(T_\ell) = \widetilde{M}_{\ell,\ell}$.

As noted above, we also have $L\subseteq K_{\ell}$.
Since $[L:K]=2$, it follows that $G'_{\ell}$ is a subgroup of $\Aut(T_{\ell})$
of index~2. In addition, since $\kappa_{\ell}=\Delta(H_{\ell})$ is a square in $L$,
every $\sigma\in G'_{\ell}$ acts as an even permutation of the nodes at the $\ell$-th
level of the tree. Thus, $G'_{\ell}$ must be isomorphic to the set of elements of $\Aut(T_{\ell})$
that are even at the $\ell$-th level; that is, $G'_{\ell}\cong M_{\ell,\ell}$.

Applying the same inductive argument as in the $n\geq\ell+1$ portion of the
1(a)$\Rightarrow$1(b) proof, it follows that $G'_n\cong M_{\ell,n}$
for all $1\leq n \leq N$.
%Thus, $G'_{\infty}$ is isomorphic to $M_{\ell}$.
%By Theorem~\ref{thm:main1}
Thus, $G_{n}$ is isomorphic to a subgroup of $\widetilde{M}_{\ell,n}$
that contains $G'_{n}\cong M_{\ell,n}$ as a subgroup of index $[L:K]=2$.
Therefore, $G_{n}$ is isomorphic to the whole group $\widetilde{M}_{\ell,n}$
for all such $n$, including $n=N$.
%By Theorem~\ref{thm:main1}, then, $G_{\infty}$ is isomorphic to a subgroup of $\widetilde{M}_{\ell}$
%that contains $G'_{\infty}\cong M_{\ell}$ as a subgroup of index $[L:K]=2$.
%Hence, $G_{\infty}$ is isomorphic to the whole group $\widetilde{M}_{\ell}$.

\medskip

\noindent
\textbf{2(b)$\boldsymbol{\Rightarrow}$2(a)}:
Suppose there is a product
$\kappa_{i_1} \cdots \kappa_{i_m}$ that is a square in $K$,
with
\[ 1\leq i_1 < \cdots < i_m\leq N \quad\text{ and } m\geq 1, \]
and such that some $i_j$ is not $\ell$.
Thus, even if $\kappa_{\ell}$ appears in the product, we may remove it,
leaving a nontrivial product $\kappa_{i_1} \cdots \kappa_{i_m}$ that does not include $\kappa_{\ell}$,
and which is not a square in $L$.

By Case~1, it must be that $G'_{N}$ is \emph{not} isomorphic to $M_{\ell,N}$.
That is, $G'_{N}$ is isomorphic to a proper subgroup of $M_{\ell,N}$, so that
$[M_{\ell,N}:G'_{N}]\geq 2$.
Since $[G_{N}:G'_{N}] = [L:K]=2$, and $[\widetilde{M}_{\ell,N}:M_{\ell,N}]=2$,
it follows that $[\widetilde{M}_{\ell,N}:G_{N}]\geq 2$.
In particular, $G_{N} \not\cong \widetilde{M}_{\ell}$.
%That is, $G'_{n}$ is isomorphic to a proper subgroup of $M_{\ell,n}$, so that
%$[M_{\ell}:G'_{\infty}]\geq 2$.
%Since $[G_{n}:G'_{n}] = [L:K]=2$, and $[\widetilde{M}_{\ell}:M_{\ell}]=2$,
%it follows that $[\widetilde{M}_{\ell}:G_{\infty}]\geq 2$.
%In particular, $G_{\infty} \not\cong \widetilde{M}_{\ell}$.
\end{proof}

\textbf{Acknowledgments}.
The first author gratefully acknowledges the support of NSF grant DMS-2101925.
The authors thank the anonymous referee for their careful reading of the original
manuscript of the paper, and for their suggestions, which greatly improved the
exposition.

\bibliographystyle{amsalpha}

\end{document}